\documentclass[pdflatex]{sn-jnl}
\usepackage{multirow}%
\usepackage{amsmath,amssymb,amsfonts}%
\usepackage{amsthm}%
\usepackage{mathrsfs}%
\usepackage[title]{appendix}%
\usepackage{xcolor}%
\usepackage{textcomp}%
\usepackage{manyfoot}%
\usepackage{booktabs}%
\usepackage{algorithm}%
\usepackage{algorithmicx}%
\usepackage{algpseudocode}%
\usepackage{listings}%

\usepackage{bm, mathdots, url}

\geometry{centering}

\DeclareMathOperator{\app}{app}

\DeclareMathOperator{\intt}{int}
\DeclareMathOperator{\pos}{pos}
\DeclareMathOperator{\parr}{par}
\DeclareMathOperator{\supp}{supp}


\newcommand{\R}{\mathbb{R}}
\newcommand{\Q}{\mathbb{Q}}
\newcommand{\Z}{\mathbb{Z}}
\newcommand{\N}{\mathbb{N}}
\newcommand{\bA}{\bm{A}}
\newcommand{\bx}{\bm{x}}
\newcommand{\by}{\bm{y}}
\newcommand{\bb}{\bm{b}}
\newcommand{\bv}{\bm{v}}
\newcommand{\bd}{\bm{d}}
\newcommand{\ba}{\bm{a}}
\newcommand{\bz}{\bm{z}}
\newcommand{\br}{\bm{r}}
\newcommand{\be}{\bm{e}}
\newcommand{\bB}{\bm{B}}
\newcommand{\bD}{\bm{D}}
\newcommand{\bU}{\bm{U}}

\theoremstyle{thmstyleone}%
\newtheorem{theorem}{Theorem}%
\newtheorem{proposition}{Proposition}
\newtheorem{lemma}{Lemma}
\newtheorem{corollary}{Corollary}

\theoremstyle{thmstyletwo}%
\newtheorem{example}{Example}%

\theoremstyle{thmstylethree}%

\raggedbottom

\begin{document}
	\title[Sparse Approximation in Lattices and Semigroups]{Sparse Approximation in Lattices and Semigroups}
	
	\author[1]{\fnm{Stefan} \sur{Kuhlmann}}\email{stefan.kuhlmann@math.ethz.ch}
	
	\author[2]{\fnm{Timm} \sur{Oertel}}\email{timm.oertel@fau.de}
	
	\author[1]{\fnm{Robert} \sur{Weismantel}}\email{robert.weismantel@ifor.math.ethz.ch}
	
	\affil[1]{\orgdiv{Department of Mathematics}, \orgname{ETH Zürich}, \city{Zürich}, \country{Switzerland}}
	
	\affil[2]{\orgdiv{Department of Mathematics}, \orgname{FAU Erlangen-Nürnberg}, \city{Erlangen}, \country{Germany}}
	
	\abstract{This paper deals with the following question: Suppose that there exist an integer or a non-negative integer solution $\bx$ to a system $\bA\bx = \bb$, where the number of non-zero components of $\bx$ is $n$. The target is, for a given natural number $k < n$, to approximate $\bb$ with $\bA\by$ where $\by$ is an integer or non-negative integer solution with at most $k$ non-zero components. 
	We establish upper bounds for this question in general. In specific cases, these bounds are tight. If we view the approximation quality as a  function of the  parameter $k$, then  the paper explains why the quality of the approximation increases exponentially as $k$ goes to $n$. This paper is a complete version of an extended abstract that appeared at the 26th International Conference on Integer Programming and Combinatorial Optimization (IPCO) \cite{ipco_version}.}
	
	\keywords{Integer Programming, Sparse Integral Solutions, Semigroups, Approximate Carath\'eodory}
	
	\maketitle
	
	\section{Introduction}
	The central object in this paper are sparse integer-valued vectors. A vector is called sparse if only few of its entries are non-zero. In the context of an integer linear program with constraints $\bA\bx=\bb, \bx\in\Z^n_{\geq 0}$, sparse optimal vectors are often preferred over general solutions, as each non-zero entry typically corresponds to a change in the underlying system modeled by the integer linear program. Having only a few of those changes can reduce costs, ensure robustness, and make it easier to interpret the optimal solution. In recent years, several results have been proven that guarantee sparse integer solutions to $\bA\bx=\bb, \bx\in\Z^n_{\geq 0}$ for integer-valued $\bA$ and $\bb$. These bounds only depend on the number of equations and the size of the matrix $\bA$; see Subsection~\ref{ss_related_literature} for the explicit bounds and references. While these bounds already imply that the number of non-zero entries of an optimal solution cannot be too large, the number of non-zeros may still be impractical. This might happen if, for instance, the entries in $\bA$ are large. Motivated by this, we study the following trade-off between sparsity and feasibility: given a sparsity bound $k$, can we find $\bx\in\Z^n_{\geq 0}$ such that $\bx$ has at most $k$ non-zero entries and $\bA\bx \approx \bb$?

	Let us make this mathematically clean. 
	Fix a set $X\subseteq \R^n$. We will consider the cases where $X = \Z^n$ is the underlying lattice or $X=\Z_{\geq 0}^n $ is the underlying semigroup. 
	The second input to the problem is a matrix
	$\bA = (\ba_1,\ldots,\ba_n)\in\Z^{m\times n}$ of full row rank. We are interested in approximations of elements in the set $\bA\cdot X := \lbrace \bb \in \R^m : \bb = \bA\bx \text{ for }\bx\in X\rbrace$ by elements of $X$ whose support is small. In order to turn this into a formula, let $\Vert\cdot\Vert$ be a norm on $\R^m$ and let $k\in\lbrack n\rbrack$ be a given sparsity parameter, where $\lbrack n\rbrack :=\lbrace 1,\ldots,n\rbrace$ for $n\geq 1$. Our target is to determine the best possible bound in terms of $n,m,k$, and eventually some parameter associated with $\bA$ for the problem 
	\begin{align*}
		\app_{X,k,\Vert\cdot\Vert}(\bA) := \max_{\bb\in \bA\cdot X}\min \left\lbrace \Vert \bA\bx - \bb\Vert : \bx\in X, \ \left|\supp(\bx)\right|\leq k\right\rbrace,
	\end{align*}
	where $\supp(\bx) :=\lbrace i\in\lbrack n\rbrack : \bx_i\neq 0\rbrace$ for $\bx\in\R^n$. 
	This is a very broad definition and it comes as no surprise that bounds for this problem vary significantly with different input parameters $X$ and $\Vert\cdot\Vert$. 
	
	We begin by discussing the parameter $\app_{X,k,\Vert\cdot\Vert}(\bA)$ in the continuous setting when $X = \R^n$ and $X = \R^n_{\geq 0}$. If $X = \R^n$ and $k\geq m$, then we have
	$\app_{X,k,\Vert\cdot\Vert}(\bA) = 0$, since every $\bb\in \bA\cdot X = \R^m$ can be expressed by an invertible $m\times m$ submatrix of $\bA$. 
	Similarly, when $X = \R^n_{\geq 0}$, a result by Carath\'eodory implies that $\app_{X,k,\Vert\cdot\Vert}(\bA) = 0$ if $k\geq m$. For $k < m$, in both cases, $X = \R^n$ and $X = \R^n_{\geq 0}$, finding an optimal sparse approximation is essentially equivalent to identifying a $k$-dimensional subspace spanned by a subset of the set of columns of $\bA$ that is close to a target vector $\bb$. Of course, if we maximize over all
	feasible vectors in $\bA\cdot X$, the approximation error
	$\app_{X,k,\Vert\cdot\Vert}(\bA)$ then diverges to infinity as $\bb$ may lie
	arbitrarily far away from any of these $k$-dimensional subspaces. 
	This argument remains true when $X = \Z^n$ or $X = \Z^n_{\geq 0}$.
	To avoid this situation, we assume throughout that $k \geq m$.
	
	\subsection{Related literature}
	\label{ss_related_literature}
	To the best of our knowledge, there are no studies of the parameter $\app_{X,k,\Vert\cdot\Vert}(\bA)$ as a function in $k$ available when $X = \Z^n$ or $X = \Z^n_{\geq 0}$. 
	In contrast, the task of finding an upper bound on $k$ that guarantees $\app_{X,k,\Vert\cdot\Vert}(\bA) = 0$ has received considerable attention. 
	This problem is equivalent to finding a sparse solution to the equality system $\bA\bx =\bb$ with $\bx\in\Z^n$ or $\bx\in\Z^n_{\geq 0}$. When $X=\Z^n$, such an upper bound on $k$ has been established in \cite[Theorem 1]{alievAverkovLoeraOertel21}: Let
	\begin{align}
		\label{def_small_delta}
		\delta(\bA) := \min\lbrace \left|\det\bB\right| : \bB \text{ is an invertible }m\times m\text{ submatrix of }\bA\rbrace.
	\end{align}
	Then the bound in \cite[Theorem 1]{alievAverkovLoeraOertel21} gives essentially
	$\app_{\Z^n,k,\Vert\cdot\Vert_\infty}(\bA) = 0$ whenever $k \geq m + \log_2(\delta(\bA))$. More refined results, which use among others the prime factorization of $\delta(\bA)$, are given in \cite{alievAverkovLoeraOertel21,DubeyLiu2025}. These bounds are asymptotically tight. 
	When $X =\Z^n_{\geq 0}$, it has been shown in \cite{eisenbrandshmonincaratheodorybounds06} that $\app_{X,k,\Vert\cdot\Vert}(\bA) = 0$ whenever 
	$k \geq 2m\log_2(4m\Vert\bA\Vert_\infty)$, where $\Vert\bA\Vert_\infty$ denotes the largest entry of $\bA$ in absolute value. This bound has been improved to $2m\log_2(2\sqrt{m}\Vert\bA\Vert_\infty)$ in \cite{alievdeloesparelindio2017}, which is tight up to the constants; see \cite{berndt2021new,berndt2024new}.
	Upper bounds on the sparsity are also available when the parameter $\Vert\bA\Vert_\infty$ is replaced by $\sqrt{\det\bA\bA^\top}$ or the largest subdeterminant of $\bA$ \cite{alievAverkovLoeraOertel21,alievdeloesparelindio2017,gribanov2024delta,lee2020improving}. 
	All these bounds apply also to optimal integral solutions of integer linear programs as it has been shown in \cite{alideloeisoerweissupportint2018}.
	
	The task of determining $\app_{X,k,\Vert\cdot\Vert}(\bA)$ has the flavor of a sparse recovery problem in the context of signal processing. One typical high-level question in this area is to find conditions on $\bA$ such that one can replace the constraint $\left|\supp(\bx)\right|\leq k$ by $\|\bx\|_1 \leq k$ when $k$ is small, i.e., $k \in\mathcal{O}(\log_2(m))$. A fundamental assumption for this to work  is to know that a sparse solution exists, or even that it is unique. Then one can try to recover it by applying a tractable algorithm. This line of research is very active, with extensive literature addressing the cases when $X$ contains real-valued or complex-valued vectors; see for instance \cite{Candes2006Robust,Candes2006Stable,Candes2005Decoding,Candes2007Dantzig,Donoho2006Compressed} or \cite{foucart_rauhut_2013} for a comprehensive overview. In our work, we are concerned with the case when $X$ contains integer-valued vectors; see \cite{alasmari2023unique,flinth2018promp,fukshansky2019algebraic,konyagin2018recovery} for some results on an integer version of sparse recovery. 
	The direction that we take in this paper deviates from the classical research questions addressed in the field of sparse recovery in signal processing. In particular, we consider a fixed matrix $\bA$ and have no a-priori knowledge about the existence of a sparse solution.
	
	Let us remark that investigating $\app_{X,k,\Vert\cdot\Vert}(\bA)$ when $k<m$ and $X = \R^n_{\geq 0}$ can be seen as an approximate version of Carath\'eodory's theorem. Variants of this problem have been studied in different contexts such as faster algorithms for integer programming \cite{approx_to_exact}, approximating Nash equilibria \cite{nash_equilibria}, and deterministically factorizing sparse polynomials with bounded degrees \cite{deterministic_factorization}, to name a few; see \cite{pmlr-v70-mirrokni17a} for tight bounds. Our results can be viewed as an extension of this line of work to the integer setting $\Z^n_{\geq 0}$.
	
	\subsection{Contributions}
	We begin with $X=\Z^n$. Recall from above that the bound in \cite[Theorem 1]{alievAverkovLoeraOertel21} already gives us $\app_{\Z^n,k,\Vert\cdot\Vert_\infty}(\bA) = 0$ whenever $k \geq m + \log_2(\delta(\bA))$. 
	In this paper, we show a more general result in Theorem~\ref{thm_lattice_main} as it states that 
	\begin{align*}
		\app_{\Z^n,k,\Vert\cdot\Vert_\infty}(\bA) \leq \delta(\bA)/2^{k-m+1},
	\end{align*}
	i.e., the approximation error decreases exponentially as $k$ goes to $n$. 
	This bound implies a sparsity bound on exact solutions: For $k \geq m + \log_2(\delta(\bA))$, we obtain $\app_{\Z^n,k,\Vert\cdot\Vert_\infty}(\bA)  \leq  1/2$. Since $\app_{\Z^n,k,\Vert\cdot\Vert_\infty}(\bA)$ is an integer, we conclude that $\app_{\Z^n,k,\Vert\cdot\Vert_\infty}(\bA)  = 0$. The bound $ m + \log_2(\delta(\bA))$ is of the same order of magnitude as the one presented in \cite[Theorem 1]{alievAverkovLoeraOertel21}. This is not a coincidence as our techniques extend the approach used there. Another new contribution is presented in Lemma~\ref{lemma_lattice_covering}. It provides an upper bound on the covering radius of the cube with respect to sublattices of $\Z^n$. This is a key step to derive Theorem~\ref{thm_lattice_main}.
	
	We turn our attention to the case $X = \Z^n_{\geq 0}$. Let $\pos Y$ denote the set of all non-negative combinations of elements in $Y\subseteq\R^m$. 
	In the special case when $\pos\lbrace \ba_1,\ldots,\ba_n\rbrace=\R^m$, the question of determining $\app_{\Z^n_{\geq 0},k,\Vert\cdot\Vert}(\bA)$ can be tackled similar to the lattice setting where $X =\Z^n$. This is made precise in Corollary \ref{cor_pos_spans_space}. 
	When $\pos\lbrace\ba_1,\ldots,\ba_n\rbrace$ is a pointed cone, the analysis of the quantity $\app_{\Z^n_{\geq 0},k,\Vert\cdot\Vert}(\bA)$ is significantly more involved. To make this problem more tractable, we focus on the case when $\pos\lbrace\ba_1,\ldots,\ba_n\rbrace$ is a pointed simplicial cone. 
	More precisely, we fix a basis $\bB$ consisting of linearly independent column vectors of $\bA$, say
	$\ba_1,\ldots,\ba_m$, and suppose that
	$\ba_i\in\pos\lbrace \ba_1,\ldots,\ba_m\rbrace$ for each $i\in\lbrack
	n\rbrack$. 
	Our analysis uses the norm $\Vert\cdot\Vert_{P(\bB)}$ induced by the symmetric
	parallelepiped $P(\bB) := \bB\cdot \lbrack -1,1 \rbrack^m$. 
	The results also depend on the parameter $\mu :=\max_{i\in\lbrack n\rbrack}\Vert\ba_i\Vert_{P(\bB)}$. Such a dependence is necessary to show that the approximation error decreases when $k$ increases; cf. Proposition~\ref{prop_semigroup_lower_bound_general}. 
	Then we show 
	\begin{align*}
		\app_{\Z^n_{\geq 0}, k, \Vert\cdot\Vert_{P(\bB)}}(\bA)\leq \frac{1}{2^{\frac{1}{m}} - 1}\cdot \left(\frac{1}{2^{\frac{k-m}{m}}} - \frac{1}{2^{\frac{n-m}{m}}}\right)\cdot \mu^{\frac{m-1}{m}}\cdot \left|\det\bB\right|^{\frac{m-1}{m}}
	\end{align*}
	in Theorem~\ref{thm_semigroup_general_bound}. When $m=1$, this bound becomes
	\begin{align*}
		\app_{\Z^n_{\geq 0}, k, |\cdot|}(\bA)\leq \left(\frac{1}{2^{k-1}} - \frac{1}{2^{n-1}}\right)\cdot a_1;
	\end{align*} 
	see Theorem~\ref{thm_semigroup_knapsack}. In addition to this, we present a tight upper bound when $m = 1$ and $k = 2$ in Theorem~\ref{thm_semigroup_approx_2}, which relies on Sylvester's sequence from number theory. We conclude with several instances with large approximation error in Section~\ref{s_semigroups_lower_bounds}.
	
	Some of the results in this paper appeared in the proceedings of the 26th Conference on Integer Programming and Combinatorial Optimization (IPCO) in an extended abstract \cite{ipco_version}. 
	This paper distinguishes itself from the earlier version. 
	Besides several additional explanations and proofs that were omitted, 
	two major changes are listed below:
	\begin{itemize}
		\item[(a)] If $\pos\lbrace \ba_1,\ldots,\ba_n\rbrace = \R^m$, then $\app_{\Z_{\geq 0}^n,k,\Vert\cdot\Vert }(\bA)$ can be determined with the tools developed for the analysis when $X=\Z^n$; see Corollary~\ref{cor_pos_spans_space}.
		\item[(b)] We provide instances for which the order of magnitude in $k$ matches our upper bounds; see Section~\ref{s_lattices}, Proposition~\ref{prop_semigroup_lower_bound_m_1_n-1}, and Proposition~\ref{prop_semigroup_lower_bound_m_1_n_2}. 
		This requires us to use results from elementary number theory. 
	\end{itemize}
	
	\section{Lattices and Cones that Span $\R^m$}
	\label{s_lattices}
	Let $X = \Z^n$ and $\bA\in\Z^{m \times n}$ have full row rank. The set $\bA\cdot X = \bA\Z^n$ is the lattice generated by the columns of $\bA$. We refer the reader to \cite{lekkerkerker2014geometry} for more about lattices, sublattices, and their determinants. Our aim is to find sparse approximations of lattice vectors in $\bA\Z^n$. We measure the approximation error with respect to the $\ell_\infty$-norm. Observe that if $k\geq m$, we can always express $\bb$ using a basis of the lattice $\bA\Z^n$, which consists of $m$ elements. However, it is not necessarily given that such a basis exists among the columns of $\bA$. Even stronger, it could be that no proper subset of the set of columns is a generating set for the lattice $\bA\Z^n$. In this situation, our approximation result below applies. Recall from (\ref{def_small_delta}) that $\delta(\bA)$ denotes the smallest subdeterminant in absolute value of an invertible $m\times m$ submatrix of $\bA$. 
	\begin{theorem}
		\label{thm_lattice_main}
		Let $\bA\in\Z^{m\times n}$ have full row rank and $m\leq k\leq n$. Then there exists a $m\times k$ submatrix $\bD$ of $\bA$ such that 
		\begin{align*}
			\app_{\Z^n,k,\Vert\cdot\Vert_\infty}(\bA) \leq \max_{\bb\in \bA\Z^n}\min_{\bx\in\Z^k}\Vert \bD\bx - \bb\Vert_\infty \leq \frac{1}{2^{k - m + 1}}\cdot \delta(\bA).
		\end{align*}
	\end{theorem}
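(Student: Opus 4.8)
The plan is to prove the two inequalities separately; the first is essentially a tautology and the second carries the content. For the first inequality, note that if $\bD$ is formed by the columns of $\bA$ indexed by a set $S$ with $|S|=k$, then every $\bx\in\Z^k$ lifts to a vector $\tilde\bx\in\Z^n$ supported on $S$ with $\bA\tilde\bx=\bD\bx$ and $|\supp(\tilde\bx)|\le k$. Hence for every target $\bb$ the inner minimum in $\app_{\Z^n,k,\Vert\cdot\Vert_\infty}(\bA)$ is at most $\min_{\bx\in\Z^k}\Vert\bD\bx-\bb\Vert_\infty$, and maximizing over $\bb\in\bA\Z^n$ yields the first inequality for \emph{any} $k$-column submatrix $\bD$. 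It remains to exhibit one $\bD$ whose $\ell_\infty$-covering radius over targets in $\bA\Z^n$ is at most $\delta(\bA)/2^{k-m+1}$.

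For the construction, fix an invertible $m\times m$ submatrix $\bB_0$ of $\bA$ with $\lvert\det\bB_0\rvert=\delta(\bA)$; after reindexing the columns assume these are $\ba_1,\dots,\ba_m$. Pass to the finite abelian group $G:=\Z^m/\bB_0\Z^m$ of order $\delta(\bA)$ and write $\bar\ba_j$ for the image of $\ba_j$ in $G$. Since $\bA\Z^n$ is generated by \emph{all} columns of $\bA$, the subgroup $H:=\langle\bar\ba_{m+1},\dots,\bar\ba_n\rangle\le G$ is exactly $\bA\Z^n/\bB_0\Z^m$. Now pick indices $j_1,\dots,j_{k-m}$ from $\{m+1,\dots,n\}$ greedily: writing $K_i:=\langle\bar\ba_{j_1},\dots,\bar\ba_{j_i}\rangle$, at step $i$ choose any unused index if $K_{i-1}=H$, and otherwise — since the $\bar\ba_j$ generate $H\supsetneq K_{i-1}$ — choose an unused $j_i$ with $\bar\ba_{j_i}\notin K_{i-1}$, which forces $\lvert K_i\rvert\ge 2\lvert K_{i-1}\rvert$ (a proper subgroup has index at least two). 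Set $\bD:=(\bB_0,\ba_{j_1},\dots,\ba_{j_{k-m}})$; this consumes $k-m\le n-m$ columns outside $\bB_0$, so it is well defined.

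The estimate now rests on the covering-radius bound (this is Lemma~\ref{lemma_lattice_covering}): every full-rank sublattice $\Lambda\le\Z^m$ has $\ell_\infty$-covering radius at most $[\Z^m:\Lambda]/2$. I would prove this by induction on $m$: project onto the last coordinate, write $\pi(\Lambda)=c\Z$ and $\Lambda_0:=\ker\pi\cap\Lambda\le\Z^{m-1}$ of index $d_0$, so $[\Z^m:\Lambda]=c\,d_0$; given a target, first choose $\bw\in\Lambda$ whose last coordinate is the nearest multiple of $c$ to that of the target (error $\le c/2$), then correct the first $m-1$ coordinates with $\Lambda_0$ (error $\le d_0/2$ by induction), and observe $\max(c/2,d_0/2)\le c\,d_0/2$. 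Apply this to $\Lambda=\bD\Z^k$. Because $\bB_0\Z^m\subseteq\bD\Z^k\subseteq\Z^m$, we have $[\Z^m:\bD\Z^k]=\delta(\bA)/\lvert K_{k-m}\rvert$. If the greedy process ever reached $K_i=H$, then $\bD\Z^k=\bA\Z^n$ and the covering radius over targets in $\bA\Z^n$ is $0$; otherwise $\lvert K_{k-m}\rvert\ge 2^{k-m}$, so $[\Z^m:\bD\Z^k]\le\delta(\bA)/2^{k-m}$, and the lemma bounds the covering radius of $\bD\Z^k$ in $\R^m$ — a fortiori $\max_{\bb\in\bA\Z^n}\min_{\bx\in\Z^k}\Vert\bD\bx-\bb\Vert_\infty$ — by $\tfrac12\cdot\delta(\bA)/2^{k-m}=\delta(\bA)/2^{k-m+1}$.

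The part demanding the most care is the covering-radius lemma together with the index bookkeeping: one must verify that $\bA\Z^n/\bB_0\Z^m$ is genuinely the subgroup generated by the $\bar\ba_j$ (so the greedy step is always available), that enlarging the target set from $\bA\Z^n$ to $\R^m$ only increases the covering radius, that the index identity $[\Z^m:\bD\Z^k]=[\Z^m:\bB_0\Z^m]/[\bD\Z^k:\bB_0\Z^m]$ holds, and that the degenerate situations ($H=0$, i.e. $\bA\Z^n=\bB_0\Z^m$, and early termination of the greedy process) are accounted for. The doubling mechanism itself is short — each column that enlarges $K_{i-1}$ at least halves $[\Z^m:\bD\Z^k]$ — and it is precisely what produces the factor $2^{k-m+1}$ in the denominator; the base case $k=m$ is recovered by taking $\bD=\bB_0$ and using the lemma directly.
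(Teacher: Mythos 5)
Your proposal is correct and follows essentially the same route as the paper: both choose an $m\times m$ submatrix realizing $\delta(\bA)$, greedily append columns that strictly enlarge the lattice (so the index at least halves at each step), and conclude via a covering-radius lemma stating that a full-rank sublattice $\Lambda\le\Z^m$ has $\ell_\infty$-covering radius at most $\frac{1}{2}[\Z^m:\Lambda]$. Your reformulation via the quotient group $\Z^m/\bB_0\Z^m$ and your inductive-projection proof of the covering lemma are cosmetic variants of the paper's determinant bookkeeping and Hermite-normal-form rounding argument, respectively.
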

	The first inequality holds by definition. 
	To show the second inequality, we rely on the fact that proper subsets of the set of columns of $\bA$ still generate some sublattice of $\bA\Z^n$. 
	Our point of departure is the following lemma that, in particular, holds for $m\times l$ submatrices of $\bA$ with full row rank. 
	\begin{lemma}
		\label{lemma_lattice_covering}
		Let $\bD\in\Z^{m\times l}$ have full row rank and $\Lambda =\bD \Z^l$. Then
		\[
		\min_{\bx\in \Z^l}\|\bD\bx-\bd\|_\infty  \le \frac{1}{2}\cdot\det\Lambda
		\]
		for all $\bd\in \R^m$.
	\end{lemma}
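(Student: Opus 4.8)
The quantity $\min_{\bx\in\Z^l}\|\bD\bx-\bd\|_\infty$ is exactly the distance from $\bd$ to the lattice $\Lambda = \bD\Z^l$ measured in the $\ell_\infty$-norm, so what we must bound is the covering radius of $\Lambda$ with respect to the cube $[-1,1]^m$ — equivalently, show that the translates $\bv + \tfrac12(\det\Lambda)[-1,1]^m$, $\bv\in\Lambda$, cover $\R^m$. The natural way to do this is a volume/packing argument: the cube $Q_r = r[-1,1]^m$ has volume $(2r)^m$, and a fundamental domain of $\Lambda$ has volume $\det\Lambda$. If $(2r)^m \ge \det\Lambda$ it is tempting to conclude that $Q_r + \Lambda = \R^m$, but volume alone does not suffice; one needs the shape of $\Lambda$ to cooperate. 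The clean route is induction on $l$ (and on $m$), peeling off one coordinate direction at a time.

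Let me spell out the induction. Write $\Lambda = \bD\Z^l$ with $\bD$ of full row rank $m$. Pick a primitive vector $\bv\in\Lambda$ and consider the projection $\pi$ onto the orthogonal complement of (or, more conveniently, onto a coordinate hyperplane transverse to) $\bv$; then $\pi(\Lambda)$ is a lattice of rank $m-1$ and $\det(\pi(\Lambda)) = \det(\Lambda)/\|\text{something}\|$. Actually the slicker version, matching the $\ell_\infty$-flavour of the statement, is this: after a unimodular change of basis on $\Z^l$ we may assume the last row of $\bD$ has a particular form; project onto the first $m-1$ coordinates. The image lattice $\Lambda'$ has rank $m-1$, and for each coset of $\Lambda'$ the fibre inside $\Lambda$ is a one-dimensional arithmetic progression whose step controls the rounding error in the last coordinate. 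One shows $\det\Lambda' \cdot (\text{fibre step}) = \det\Lambda$ — this is the standard determinant-splitting identity for the exact sequence $0\to \ker\to \Lambda\to\Lambda'\to 0$. Apply the inductive hypothesis to $\Lambda'$ in $\R^{m-1}$ to round the first $m-1$ coordinates of $\bd$ with $\ell_\infty$-error at most $\tfrac12\det\Lambda'$; then, within the resulting fibre (a coset of a rank-one lattice with step $s = \det\Lambda/\det\Lambda'$), round the last coordinate with error at most $s/2$. The base case $m=1$ is immediate: $\Lambda = s\Z$ for $s = \det\Lambda$, and any real is within $s/2$ of $s\Z$. The subtlety is that after rounding the first $m-1$ coordinates we have used up ``budget'' $\tfrac12\det\Lambda'$ there and $\tfrac12 s$ in the last coordinate, and the $\ell_\infty$-norm of the combined error is the \emph{maximum}, which is $\max\{\tfrac12\det\Lambda',\ \tfrac12 s\}$ — this need not be $\le \tfrac12\det\Lambda$ unless we are careful.

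This last point is the main obstacle, and it is resolved by a good choice of which direction to peel off. The right move is: among all primitive vectors of $\Lambda$, or rather among coordinate projections, choose the one making the fibre step $s$ as small as possible — equivalently, work with a basis of $\Lambda$ in Hermite-like form where the successive ``pivot'' entries are non-increasing, or simply choose the splitting so that $s \le \det\Lambda'$ is \emph{not} what we want; instead we want to guarantee both $\det\Lambda' \le \det\Lambda$ and $s\le \det\Lambda$. The first is automatic once we arrange $s\ge 1$, which we can by taking $\bv$ primitive; but $s\ge 1$ forces $\det\Lambda' = \det\Lambda/s \le \det\Lambda$, giving $\tfrac12\det\Lambda'\le\tfrac12\det\Lambda$, and we separately need $s\le\det\Lambda$, i.e. $\det\Lambda'\ge 1$, which holds because $\Lambda'$ is a genuine lattice of full rank $m-1$ in a rational subspace — but here one must be careful that $\Lambda'$ is a lattice in $\Z^{m-1}$-like coordinates so $\det\Lambda'\ge 1$, or more honestly, rescale. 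In fact the cleanest statement of the inductive claim is the stronger one: for every primitive projection, with $s$ the fibre step and $\Lambda'$ the image, one has $\max\{\tfrac12\det\Lambda',\,\tfrac12 s\}\le \tfrac12\det\Lambda$ as soon as $\min\{s,\det\Lambda'\}\ge 1$, and one shows such a splitting exists (e.g. via the Smith/Hermite normal form of $\bD$, where the last pivot can be taken $\ge 1$ and the complementary minor is $\ge 1$). I would therefore organise the proof as: (1) reduce to $\bD$ being a basis matrix of $\Lambda$ in Hermite normal form by a unimodular column operation (which does not change $\Lambda$ or the $\ell_\infty$-distance); (2) induct on $m$, splitting off the last coordinate, using the determinant identity $\det\Lambda = s\cdot\det\Lambda'$ with $s\ge 1$ the last diagonal entry and $\det\Lambda'\ge 1$ the product of the remaining diagonal entries; (3) combine the inductive $\ell_\infty$-bound $\tfrac12\det\Lambda'$ on the first $m-1$ coordinates with the $\tfrac12 s$ bound on the last, and observe $\max\{\tfrac12\det\Lambda',\tfrac12 s\}\le\tfrac12\, s\cdot\det\Lambda' = \tfrac12\det\Lambda$ precisely because each factor is $\ge 1$. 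The base case $m=1$ is trivial as noted.
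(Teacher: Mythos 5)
Your final plan (reduce to Hermite normal form, then round coordinate by coordinate so the error in coordinate $i$ is at most half the $i$-th diagonal pivot, and observe that since each pivot is $\ge 1$ the maximum of these errors is at most half the product $=\tfrac12\det\Lambda$) is correct and is essentially the paper's proof; the paper just writes the rounding as an explicit recursion $z_i = \lceil(\cdot)/B_{i,i}\rfloor$ rather than wrapping it in an induction on $m$. The lengthy preliminary discussion of volume arguments, primitive-vector projections, and worries about the ``budget'' is unnecessary detour, but it does land on the right resolution.
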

	\begin{proof}
		There exists a unimodular matrix $\bU\in\Z^{l\times l}$ such that $\bD\bU= [\bB, \bm{0}]$ is in Hermite normal form; cf.  \cite[Chapter 4]{schrijvertheorylinint86}.
		In particular, the matrix $\bB$ is a lower triangular matrix and $\det\Lambda=B_{1,1}\cdot\ldots\cdot B_{m,m}$, where $B_{i,i}$ refers to the $i$-th diagonal entry of $\bB$ for $i\in\lbrack m\rbrack$. 
		Define $\bx = (x_1,\ldots,x_l)^\top\in\Z^l$ and $\bz = (z_1,\ldots,z_m)^\top\in\Z^m$ recursively such that  
		\begin{align*}
			x_i = z_i := \left\lceil \frac{d_i-\sum_{j=1}^{i-1}B_{i,j}z_j}{B_{i,i}}\right\rfloor,
		\end{align*}
		for $i=1,\ldots,m$,
		where $\left\lceil y\right\rfloor$ denotes the integer closest to $y\in\R$, and $x_i=0$ for $i=m+1,\ldots,l$.
		Then we have
		\begin{align*}
			\left|(\bD\bU\bx - \bd)_i\right| = \left|(\bB\bz - \bd)_i\right| 
			= \left| B_{i,i}\left(\frac{ \sum_{j=1}^{i-1} B_{i,j}z_j -d_i}{B_{i,i}} + z_i\right)\right| \leq \tfrac{1}{2}B_{i,i}
		\end{align*}
		for all $i\in\lbrack m\rbrack$. The last term above is upper bounded by $1/2\cdot\det\Lambda$. So the integer vector $\bU\bx$ satisfies the claimed bound.
	\end{proof}
	Note, one may also interpret $\min_{\bx\in \Z^l}\|\bD\bx-\bd\|_\infty$ in Lemma~\ref{lemma_lattice_covering}  as the inhomogeneous minimum or the covering radius of the unit cube $[-1,1]^m$ with respect to the lattice $\Lambda$. Upper bounds on the inhomogeneous minima for general convex bodies typically rely on the so-called successive minima. In the special case of the unit cube, our bound surpasses these general results. Furthermore, the bound in Lemma~\ref{lemma_lattice_covering} is tight in general, which follows from the special case when $\Lambda = \Z^n$. 
	We refer the interested reader to \cite[Chapter 2, Section 13]{lekkerkerker2014geometry} for details concerning inhomogeneous minima, successive minima, and their relation.
	\begin{proof}[Proof of Theorem~\ref{thm_lattice_main}]
		We prove the second inequality by recursively constructing submatrices $\bD$ of $\bA$ to which Lemma \ref{lemma_lattice_covering} applies. Let $\bB$ be an $m\times m$ submatrix of $\bA$ such that $\left|\det\bB\right| = \delta(\bA)$. We set $\bD_0 := \bB$ and $\Lambda_0 := \bD_0\Z^m$. For $i\in \lbrack n - m\rbrack$, choose a column of $\bA$, say $\bv_i$, such that $\bv_i\notin \Lambda_{i-1}$. If there is no such column, we simply select an arbitrary column. Then we define $\bD_i := [\bD_{i-1},\bv_i]\in\Z^{m + i}$ and $\Lambda_i :=\bD_i\Z^{m + i}$. By Lemma~\ref{lemma_lattice_covering}, we have 
		\begin{align}
			\label{inequ_lattice_proof}
			\min_{\bx\in\Z^{m + i}}\Vert \bD_i\bx - \bb\Vert_\infty\leq \frac{1}{2}\cdot\det\Lambda_i	
		\end{align}
		for all $i\in\lbrace 0,\ldots,n - m\rbrace$ and $\bb \in \bA\Z^n$. To finish the proof, we analyze $\det\Lambda_i$ for $i\in\lbrace 0,\ldots,n-m\rbrace$. If $i=0$, we get $\det\Lambda_0 = \left|\det\bB\right| = \delta(\bA)$ and the claim follows. Fix some $i\in\lbrack n-m\rbrack$ and recall that $\bv_i$ is the column that we added in the $i$-th step. If $\bv_i\in \Lambda_{i-1}$, our construction implies that all columns of $\bA$ that are not columns of $\bD_{i-1}$ are contained in $\Lambda_{i-1}$. It follows that $\det\bA\Z^n = \det\Lambda_{i-1} = \det\Lambda_i$ and $\min_{\bx\in\Z^{m + i}}\Vert \bD_i\bx - \bb\Vert_\infty = 0$ for all $\bb\in\bA\Z^n$, which implies the claim. So suppose that $\bv_i\notin\Lambda_{i-1}$. This is equivalent to $\Lambda_{i-1}\subsetneq \Lambda_i$. Since $\Lambda_{i-1}$ is a proper sublattice of $\Lambda_i$, we obtain that $\det\Lambda_i \neq \det\Lambda_{i-1}$ and $\det\Lambda_i$ divides $\det\Lambda_{i-1}$. Thus, we have $\det\Lambda_i\leq 1/2\cdot\det\Lambda_{i-1}$. By recursively applying this inequality, we recover 
		\begin{align*}
			\det\Lambda_i \leq \frac{1}{2^i}\cdot\det\Lambda_0 = \frac{1}{2^i}\cdot\delta(\bA).
		\end{align*}
		Plugging this into (\ref{inequ_lattice_proof}) and choosing $i = k - m$ proves the claim. 
	\end{proof}
	
	Below is one example that shows that the bound presented in Theorem~\ref{thm_lattice_main} is tight when $k = n - 1$ and another example that illustrates the exponential decrease in $k$.
	
	\begin{example}
		Consider the matrix $\bA\in\Z^{m\times 2m}$ with columns  $2\be_i$ and $3\be_i$, where $\be_i$ denotes the $i$-th standard unit vector and $i=1,\ldots,m$.
		We have $\delta(\bA)=2^m$ and $\bA\Z^{2m}=\Z^m$.
		Noting that we need all columns to express the all-ones vector exactly we conclude that $\app_{\Z^{2m},2m-1,\Vert\cdot\Vert_\infty}(\bA)= 1 = 1/2^m\cdot\delta(\bA)$.
	\end{example}
	
	\begin{example}
		\label{ex_lattice_exponential_behavior}
		Let $p_1<\cdots<p_n$ be prime numbers and $q_i :=\prod_{j\neq i}p_j$. 
		Define the matrix 
		\begin{align*}
			\bA := (q_1 \be_1,q_2 \be_1,\ldots,q_n \be_1,\be_2,\be_3,\ldots,\be_m)\in\Z^{m\times (m+n-1)}.
		\end{align*}
		For a subset $I\subseteq[n]$, let $\bD_I$ denote the submatrix of $\bA$ consisting of the columns $q_i\be_1$ for $i\in I$ and $\be_2,\ldots,\be_m$. The construction of the $q_i$ ensures that, for any such $I\subseteq[n]$, $\gcd(\{q_i:i\in I\})=\prod_{i\in[n]\setminus I}p_i$.
		Using this property, one can show that $\det(\bD_I\Z^{|I|+m-1})=\prod_{i\in[n]\setminus I}p_i$ and
		a basis of $\bD_I\Z^{|I|+m-1}$ is given by $\lbrace(\prod_{i\in[n]\setminus I}p_i)\be_1,\be_2,\ldots,\be_m\rbrace$.
		In particular, we obtain $\bA\Z^{m+n-1}=\Z^m$.
		
		Let $m \le k \le m + n -1$ and let $\hat I:=\{m+n-k,\ldots,n\}$.
		By Lemma~\ref{lemma_lattice_covering} with the right hand side rounded down, it holds that 
		\begin{align}
			\label{ineq_example_2_lemma}
			\min_{\bx\in\Z^k}\|\bD_{\hat I}\bx - \bb\|_\infty \le \left\lfloor\frac{1}{2}\prod_{i=1}^{m+n-1-k} p_i\right\rfloor
		\end{align}
		for all $\bb \in \Z^m$.
		Consider the vector
		\begin{align*}
			\hat\bb := \left\lfloor\frac{1}{2}\prod_{i=1}^{m+n-1-k} p_i\right\rfloor \be_1 + N \be_2 + \cdots + N \be_m\in\Z^m = \bA\Z^{m+n-1},
		\end{align*}
		where $N$ is a sufficiently large integer.
		If $\bD$ is a submatrix of $\bA$ that does not contain all the vectors $\be_2,\ldots,\be_m$ as columns, we get $\|\bD \bx - \hat\bb\|_\infty \ge N$ for all integer $\bx$. So it suffices to consider submatrices of the form $\bD_I$ for $|I| = k - (m-1)$. For all of the corresponding sublattices $\bD_I\Z^k$, the best approximation of $\hat \bb$ is $N\be_2+\cdots+N\be_m$ as the first coordinate is a multiple of $\prod_{i\in[n]\setminus I}p_i$. This term is minimal for $\hat I$ with value $\prod_{i=1}^{m+n-1-k} p_i$. 
		Thus we have
		\[
		\min_{\substack{\bx\in\Z^{m+n-1}\\ \left|\supp(\bx)\right|\le k}}\|\bA \bx -\hat\bb\|_\infty= \min_{\substack{I\subseteq [n]\\|I|=k-(m-1)}}\min_{\bx\in\Z^{k}}\|\bD_I \bx -\hat\bb\|_\infty = \min_{\bx\in\Z^{k}}\|\bD_{\hat I} \bx -\hat\bb\|_\infty
		\]
		and the latter equals $\left\lfloor1/2\cdot\prod_{i=1}^{m+n-1-k} p_i\right\rfloor$. 
		Combining this and (\ref{ineq_example_2_lemma}) gives us
		\[
		\app_{\Z^n,k,\Vert\cdot\Vert_\infty}(\bA)=\left\lfloor\frac{1}{2}\prod_{i=1}^{m+n-1-k} p_i\right\rfloor.
		\]
		Let $p_1$ be the $k$-th prime number and $p_2,\ldots,p_n$ be the consecutive ones.
		For $k$ large, we may assume by the prime number theorem, that all numbers are of similar magnitude, say $p_1,\ldots,p_n\approx k\log_e( k) =: c$. Then we have $\delta(\bA) = \min\lbrace q_1,\ldots,q_n\rbrace \approx c^{n-1}$ and 
		\[
		\app_{\Z^n,k,\Vert\cdot\Vert_\infty}(\bA)\approx \frac{1}{2c^{k-m}} \cdot\delta(\bA),
		\]
		which matches up to the constant $c$ the exponential behavior shown in Theorem~\ref{thm_lattice_main}.
	\end{example}
	
	To capture Example~\ref{ex_lattice_exponential_behavior}, one can refine our argument in the proof of Theorem~\ref{thm_lattice_main} by choosing the $\bv_i$ that minimizes $\det\Lambda_i$ when constructing $\bD$. 
	This relates to incorporating the prime factorization of $\delta(\bA)$ similar to the techniques in the proof of \cite[Theorem 1]{alievAverkovLoeraOertel21}. 
	We omit this here for the sake of brevity and since we are mainly interested in proving the exponential behavior in $k$.
	
	Finally, using the fact that in the case that the columns of $\bA$ span the whole space positively, already a sub-selection of at most $2m$ columns do so, 
	we can derive from Theorem~\ref{thm_lattice_main} the following corollary for the case of semigroups.
	\begin{corollary}
		\label{cor_pos_spans_space}
		Let $\bA = (\ba_1,\ldots,\ba_n)\in\Z^{m\times n}$, $\pos\lbrace \ba_1,\ldots,\ba_n\rbrace  = \R^m$, and $2m\leq k\leq n$. Then there exists a $m\times k$ submatrix $\bD$ of $\bA$ such that
		\begin{align*}
			\app_{\Z^n_{\ge0},k,\Vert\cdot\Vert_\infty}(\bA) 
			\leq \max_{\bb\in \bA\Z^n}\min_{\bx\in\Z^k_{\ge0}}\Vert \bD\bx - \bb\Vert_\infty \leq \frac{1}{2^{k - 2m + 1}}\cdot \delta(\bA).
		\end{align*}
	\end{corollary}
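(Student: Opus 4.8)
The plan is to combine the lattice construction underlying the proof of Theorem~\ref{thm_lattice_main} with a post-processing step that turns a signed integer solution into a non-negative one without changing its image under the chosen submatrix; the hypothesis $\pos\{\ba_1,\ldots,\ba_n\}=\R^m$ is precisely what makes this last step possible. First I would fix an $m\times m$ submatrix $\bB$ of $\bA$ with $|\det\bB|=\delta(\bA)$. Since $\pos\{\ba_1,\ldots,\ba_n\}=\R^m$, the vector $-\bB\bm{1}$ lies in this cone, so by the conic version of Carath\'eodory's theorem it is a non-negative combination of at most $m$ of the columns of $\bA$. Let $\bm{C}$ be the submatrix formed by those (at most $m$) columns together with the $m$ columns of $\bB$, and let $c$ denote its number of columns, so $c\le 2m$. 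Then $\bm{C}$ contains $\bB$, and its columns positively span $\R^m$, because $\pos$ of its columns contains $\bB\cdot\pos\{\be_1,\ldots,\be_m,-\bm{1}\}=\R^m$. Moreover, since the columns of $\bB$ are among those of $\bm{C}$, the lattice $\bm{C}\Z^{c}$ contains the full-rank sublattice $\bB\Z^m$, so $\det(\bm{C}\Z^{c})$ divides $|\det\bB|=\delta(\bA)$; in particular $\det(\bm{C}\Z^{c})\le\delta(\bA)$.

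Next I would run the greedy augmentation from the proof of Theorem~\ref{thm_lattice_main}, but starting from $\bm{C}$ instead of from an $m\times m$ block: repeatedly append a column of $\bA$ that does not lie in the lattice generated so far, which at least halves its determinant; if at some step no such column exists, the lattice generated so far already equals $\bA\Z^n$, the approximation error is $0$, and one simply pads with arbitrary columns. After $k-c\ge k-2m$ augmentation steps this yields an $m\times k$ submatrix $\bD\supseteq\bm{C}$ of $\bA$ with $\det(\bD\Z^k)\le\delta(\bA)/2^{k-2m}$. Applying Lemma~\ref{lemma_lattice_covering} to $\bD$ then gives, for every $\bb\in\bA\Z^n$, a vector $\bz\in\Z^k$ with $\|\bD\bz-\bb\|_\infty\le\tfrac12\det(\bD\Z^k)\le\delta(\bA)/2^{k-2m+1}$.

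The step I expect to be the main obstacle is replacing $\bz$ by a non-negative vector $\by\in\Z^k_{\ge 0}$ with $\bD\by=\bD\bz$. Here I would use two consequences of $\pos(\text{columns of }\bm{C})=\R^m$: (i) for every column $\ba$ of $\bA$, clearing denominators in a rational conic representation of $-\ba$ by the columns of $\bm{C}$ shows that some positive integer multiple of $\ba$ equals a non-negative integer combination of the columns of $\bm{C}$; and (ii) summing such identities over all columns of $\bm{C}$ produces a strictly positive integer relation $\sum_j h_j\ba_j=0$ supported on $\bm{C}$. Using (i) I would first cancel each negative entry of $\bz$ sitting on a column outside $\bm{C}$ by adding a suitable positive amount there and compensating only on coordinates in $\bm{C}$, which leaves $\bD\bz$ unchanged; then, using (ii), I would add a large enough multiple of $(h_j)_j$ to clear the remaining negative entries, which all sit in $\bm{C}$. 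Since $\bD\bz$ is never changed and no coordinate is ever decreased once it has become non-negative, the result is the desired $\by$; processing all coordinates outside $\bm{C}$ before invoking the relation $(h_j)_j$ is what prevents reintroducing negativity that could no longer be removed.

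Finally, because $\bD$ is a submatrix of $\bA$, each such $\by$ lifts to a point of $\Z^n_{\ge 0}$ with support of size at most $k$ and the same image under $\bA$; combined with $\bA\cdot\Z^n_{\ge 0}\subseteq\bA\Z^n$ this yields the first inequality of the statement, while the second is exactly the bound $\delta(\bA)/2^{k-2m+1}$ derived above.
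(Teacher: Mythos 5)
Your proposal is correct and takes essentially the same approach as the paper: use Carath\'eodory to obtain at most $2m$ columns that positively span $\R^m$ while containing a minimal-determinant basis, apply the lattice covering argument (Lemma~\ref{lemma_lattice_covering}) to a $k$-column submatrix built on top of them, and clear negative coefficients via the integer positive-spanning relation. One small slip: in step (i) the conic representation of $-\ba$ gives that some positive integer multiple of $-\ba$ (not of $\ba$) is a non-negative integer combination of the columns of $\bm{C}$, i.e.\ a non-negative integer kernel relation of $\bD$ with strictly positive coefficient on the $\ba$-coordinate, which is in fact what your clearing step uses.
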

	\begin{proof}
		Let $k'=k-m$.
		Then, by Theorem~\ref{thm_lattice_main}, there exists a $m\times k'$ submatrix $\bD'$ of $\bA$ such that for all $\bb \in \bA\Z^n$ there exists a vector $\bx\in\Z^{k'}$ with
		\begin{equation}\label{eq_proof_corollary1}
			\|\bD' \bx - \bb \|_\infty \leq \frac{1}{2^{k' - m + 1}}\cdot \delta(\bA)
		\end{equation}
		
		By the construction of $\bD'$ in the proof of Theorem~\ref{thm_lattice_main}, we may assume without loss of generality  that $\bD'$ contains the column vectors $\ba_1,\ldots,\ba_{m}$ and $\det(\ba_1,\ldots,\ba_m)=\delta(\bA)$.
		Note that $\pos\{\ba_1,\ldots,\ba_m,-\sum_{i=1}^m\ba_i\}=\R^m$.
		Since $\pos\lbrace \ba_1,\ldots,\ba_n\rbrace=\R^m$, there exists, by Carath\'eodory's Theorem, at most $m$ columns, say $\ba_{m+1},\ldots,\ba_{2m}$, such that $-\sum_{i=1}^m\ba_i\in\pos\{\ba_{m+1},\ldots,\ba_{2m}\}$.
		Extending the matrix $\bD'$ by including the column vectors $\ba_{m+1},\ldots,\ba_{2m}$, we obtain a matrix $\bD$ with at most $k$ columns.
		We claim that this matrix satisfies the statement of the corollary.
		
		Let $\bb \in \bA\Z^n$ and let $\bx\in\Z^{k'}$ satisfy \eqref{eq_proof_corollary1}.
		Assume the entry $x_\ell$ of $\bx$ is negative.
		By our above assumption we can write $x_\ell\ba_\ell=\sum_{i=1}^{2m} \mu_i\ba_{i}$, with $\mu_i\in\R_{\geq 0}$.
		All the $\mu_i$ are rational.
		Letting $L$ denote the least common multiple of all the denominators, we can substitute $x_\ell\ba_\ell$ by a positive integer combination of $\ba_{1},\ldots,\ba_{2m}$ and $\ba_\ell$ as follows.
		\[
		x_\ell\ba_\ell = \sum_{j=1}^{2m} L\mu_j\ba_{i_j} + (1-L) x_\ell\ba_\ell.
		\]
		This can be done for all negative entries of $\bx$ to express $\bD' \bx$ as a non-negative integer combination of the columns of $\bD$.
		This completes the proof.
	\end{proof}
	
	\section{General Semigroups: $m=1$}
	Let $X = \Z^{n}_{\geq 0}$ and $\bA = (\ba_1,\ldots,\ba_n)\in\Z^{m\times n}$ have full row rank. Then $\bA\cdot X$ is the semigroup generated by the columns of $\bA$. We saw that, if $\pos\lbrace \ba_1\ldots,\ba_n\rbrace = \R^m$, 
	one can apply the lattice technique from the previous section to obtain an approximation. 
	So we suppose that $\pos\lbrace \ba_1,\ldots,\ba_n\rbrace$ generates a pointed cone. 
	We begin our analysis with the case when $m = 1$ and write $\ba^\top = (a_1,\ldots,a_n)$ for the matrix $\bA$. 
	The assumption that $a_1,\ldots,a_n$ generate a pointed cone implies without loss of generality that $0<a_1\leq a_2\leq\cdots\leq a_n$. 
	Our goal is to determine the approximation error with respect to the absolute value, that is, $\app_{\ba^\top\Z^n, k , |\cdot|}(\ba)$ for a fixed $k\geq 1$. We abbreviate this parameter by $\app_k(\ba)$. 
	To bound $\app_k(\ba)$, we apply the following strategy: We fix the smallest integer $a_1$ and bound the slightly refined parameter
	\begin{align*}
		\app_{a_1,k}(\ba) := \max_{b\in \ba^\top \Z^{n}_{\geq 0}}\min \left\lbrace \left| \ba^\top\bx - b\right| : \bx\in \Z^n_{\geq 0}, \ \left|\supp(\bx)\backslash \{1\}\right|\leq k - 1\right\rbrace,
	\end{align*}
	which corresponds to the best approximation that uses $\ba_1$. By definition, we have $\app_k(\ba)\leq \app_{\ba_1,k}(\ba)$. 
	Using the pigeonhole principle one can derive an upper bound on $\app_{n-1}(\ba)$, which even holds for real valued $\ba$. 
	
	\begin{theorem}
		\label{thm_semigroup_m_1_n-1}
		Let $\ba\in\R^{n}$ with $0 < a_1\leq \cdots\leq a_n$. Then we have
		\begin{align*}
			\app_{n-1}(\ba)\leq\app_{a_1,n-1}(\ba)\leq\frac{1}{2^{n-1}}\cdot a_1.
		\end{align*}
	\end{theorem}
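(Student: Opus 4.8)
The plan is to establish the non-trivial second inequality; the first one is immediate, since $\{\bx : |\supp(\bx)\setminus\{1\}|\le n-2\}\subseteq\{\bx : |\supp(\bx)|\le n-1\}$, so restricting to the smaller feasible set can only increase the inner minimum. So fix $b\in\ba^\top\Z^n_{\ge0}$ and choose a representation $b=\sum_{i=1}^n c_i a_i$ with $c_i\in\Z_{\ge0}$. If some $c_j=0$ for an index $j\ge 2$, then this representation already uses at most $n-2$ of $a_2,\ldots,a_n$, so the inner minimum at $b$ equals $0$ and there is nothing to prove; hence I may assume $c_i>0$ for all $i\in\{2,\ldots,n\}$ (this includes ruling out $b=0$).

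Now comes the pigeonhole step. Consider the $2^{n-1}$ subset sums $v_S:=\sum_{i\in S}c_ia_i$ for $S\subseteq\{2,\ldots,n\}$, regarded as points of the circle $\R/a_1\Z$ of circumference $a_1$. These $2^{n-1}$ points cut the circle into $2^{n-1}$ arcs of total length $a_1$, so there are two distinct subsets $S\ne S'$ with $\operatorname{dist}_{\R/a_1\Z}(v_S,v_{S'})\le a_1/2^{n-1}$; after swapping the names I may assume $v_S\ge v_{S'}$. Put $P:=S\setminus S'$ and $N:=S'\setminus S$, so $P,N$ are disjoint, $P\cup N\ne\emptyset$, and $v_S-v_{S'}=v_P-v_N\ge0$. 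Since every $c_i>0$, the case $P=\emptyset$ would give $S\subsetneq S'$ and hence $v_S<v_{S'}$, a contradiction, so $P\ne\emptyset$.

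Write $v_P-v_N=ka_1+\varepsilon$ with $k$ the nearest integer to $(v_P-v_N)/a_1$; then $k\ge0$ (because $v_P-v_N\ge0$) and $|\varepsilon|=\operatorname{dist}_{\R/a_1\Z}(v_S,v_{S'})\le a_1/2^{n-1}$. Substituting $v_P=v_N+ka_1+\varepsilon$ into $b=c_1a_1+v_P+v_N+\sum_{i\in\{2,\ldots,n\}\setminus(P\cup N)}c_ia_i$ gives
\[
b-\varepsilon=(c_1+k)a_1+\sum_{i\in N}(2c_i)a_i+\sum_{i\in\{2,\ldots,n\}\setminus(P\cup N)}c_ia_i,
\]
which is a non-negative integer combination of $a_1$ and the generators indexed by $\{2,\ldots,n\}\setminus P$; as $P\ne\emptyset$, at most $n-2$ of $a_2,\ldots,a_n$ occur. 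The corresponding $\bx\in\Z^n_{\ge0}$ thus has $|\supp(\bx)\setminus\{1\}|\le n-2$ and $|\ba^\top\bx-b|=|\varepsilon|\le a_1/2^{n-1}$; since $b$ was arbitrary, this proves $\app_{a_1,n-1}(\ba)\le a_1/2^{n-1}$. Note that only positivity of the $a_i$ was used, so the argument is indifferent to whether $\ba$ is integral.

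The one genuinely delicate point — and the reason the semigroup case is harder than the lattice case — is keeping the coefficients of the constructed combination non-negative. This is secured by the two preliminary normalizations: discarding any generator with $c_i=0$ (which also forces $P\ne\emptyset$), and orienting the close pair so that $v_S\ge v_{S'}$, which is exactly what makes the rounding integer $k$ non-negative so that $c_1+k\ge0$; the coefficients $2c_i$ on $N$ are non-negative for free. I would also check the harmless degenerate configurations — the close pair involving $\emptyset$, or $P=\{2,\ldots,n\}$ — which cause no issue because discarding more than one generator is allowed by the sparsity budget $n-1$.
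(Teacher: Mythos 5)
Your proof is correct and is precisely the elementary pigeonhole argument that the paper alludes to (``Using the pigeonhole principle one can derive an upper bound on $\app_{n-1}(\ba)$, which even holds for real valued $\ba$'') but never writes out. You take an arbitrary representation $b=\sum_i c_ia_i$ with $c_i>0$ for $i\ge 2$, place the $2^{n-1}$ partial sums $v_S=\sum_{i\in S}c_ia_i$, $S\subseteq\{2,\ldots,n\}$, on the circle $\R/a_1\Z$, and extract two within $a_1/2^{n-1}$; orienting the pair so that $v_S\ge v_{S'}$ simultaneously forces $P=S\setminus S'\ne\emptyset$ (by strict positivity of the $c_ia_i$) and makes the rounding integer $k$ non-negative, which is exactly the non-negativity bookkeeping that distinguishes the semigroup setting from the lattice one. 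The paper, by contrast, proves Theorem~\ref{thm_semigroup_m_1_n-1} only under the additional hypothesis $\ba\in\Z^n$, as the $m=1$ specialization of Theorem~\ref{thm_semigroup_approx_n-1}: Lemma~\ref{lemma_general_semigroup_small_vectors} first reduces to the finite set $S$ via a residue argument modulo $\bB\Z^m$ (which requires integrality), and then a tiling of the parallelepiped $\parr\{\ba_1,\ldots,\ba_m\}$ together with the antichain count of Lemma~\ref{lemma_semigroup_partial_order} completes the argument. For $m=1$ the antichain bound trivializes to $1$ and the tiling collapses to your circle pigeonhole, but the initial reduction to $S$ still costs integrality. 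Your argument therefore establishes the theorem in the full generality in which it is stated, with a shorter and self-contained proof; the paper's heavier machinery is what buys the $m\ge 2$ extension.
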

	Next we apply Theorem~\ref{thm_semigroup_m_1_n-1}  repeatedly to obtain a bound for general values of $k$. This bound shows that the approximation factor decreases exponentially as $k$ goes to $n$. 
	\begin{theorem}
		\label{thm_semigroup_knapsack}
		Let $\ba\in\R^{n}$ with $0 < a_1\leq \cdots\leq a_n$ and $1\leq k\leq n$. Then we have
		\begin{align*}
			\app_k(\ba) \leq \app_{a_1,k}(\ba)\leq \left(\frac{1}{2^{k-1}} - \frac{1}{2^{n-1}}\right)\cdot a_1.
		\end{align*}
	\end{theorem}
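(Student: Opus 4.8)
The plan is to prove Theorem~\ref{thm_semigroup_knapsack} by induction on $k$, decreasing from $k = n-1$ down to $k = 1$, using Theorem~\ref{thm_semigroup_m_1_n-1} as the base of the recursion and as the engine of the inductive step. The key observation is that the quantity $\app_{a_1,k}(\ba)$ is really about approximating a target $b \in \ba^\top\Z^n_{\geq 0}$ using $a_1$ freely and at most $k-1$ of the remaining entries $a_2,\ldots,a_n$. So the natural reduction is: to approximate $b$ with a budget of $k$ nonzero entries (one of which is index $1$), first peel off a good multiple of one cleverly chosen intermediate entry $a_j$, reducing the residual to something of size at most $\tfrac{1}{2}\cdot(\text{something})$, and then recurse on a sub-instance with budget $k-1$.

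More concretely, I would argue as follows. Fix $b\in\ba^\top\Z^n_{\geq 0}$ and consider the two-stage approximation. First apply Theorem~\ref{thm_semigroup_m_1_n-1} to the subvector $(a_1,\ldots,a_{n})$ — or rather, use the version that gives, using $a_1$ plus at most $n-2$ further entries, a residual of absolute value at most $\tfrac{1}{2^{n-1}}a_1$; but the crucial point is to run the Theorem~\ref{thm_semigroup_m_1_n-1}-style pigeonhole argument on a nested family. The cleanest route: let $r_0 := b$, and for $i = 1, 2, \ldots$ successively use Theorem~\ref{thm_semigroup_m_1_n-1} applied to the truncated vector $(a_1,\ldots,a_{n-i+1})$ to replace the current residual $r_{i-1}$ by a new residual $r_i$ with $|r_i| \le \tfrac{1}{2^{n-i}} a_1$, at the cost of introducing one new nonzero index among $\{2,\ldots,n-i+1\}$ at each step and keeping index $1$ throughout. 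Wait — one has to be careful that the entries introduced at different stages are distinct; this is where the nesting $(a_1,\ldots,a_{n}) \supset (a_1,\ldots,a_{n-1}) \supset \cdots$ pays off, since at stage $i$ the new index lies in $\{2,\ldots,n-i+1\}$ and in particular is $\le n-i+1$, so can only collide with indices used at later (smaller-truncation) stages, not earlier ones; reindexing carefully, or simply bounding the number of distinct indices used after $k-1$ applications, shows that a budget of $k-1$ indices beyond index $1$ suffices to reach residual $|r_{k-1}| \le \tfrac{1}{2^{n-k+1}}\cdot(\text{sum of the geometric tail})\cdot a_1$. The geometric sum $\tfrac{1}{2^{k-1}} + \tfrac{1}{2^{k}} + \cdots + \tfrac{1}{2^{n-1}} = \tfrac{1}{2^{k-1}} - \tfrac{1}{2^{n-1}}$ is exactly the claimed bound, which is the tell-tale sign that this telescoping/geometric-summation approach is the intended one.

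The cleanest way to package this is probably a clean downward induction: assume the bound $\app_{a_1,k+1}(\ba) \le \bigl(\tfrac{1}{2^{k}} - \tfrac{1}{2^{n-1}}\bigr) a_1$ holds, and prove it for $k$. Given a target $b$, first use the two-element (or $(n-1)$-element) result of Theorem~\ref{thm_semigroup_m_1_n-1} on an appropriate prefix to write $b = a_j c + b'$ with $c\in\Z_{\geq 0}$, $j$ chosen minimal possible, and $|b'|$ controlled; actually the simplest: use Theorem~\ref{thm_semigroup_m_1_n-1} directly with $a_1$ alone as a "two-point" instance to get $b = a_1 c_1 + b_1$ with $|b_1| \le \tfrac12 a_1$ — but that uses no budget correctly only if we are allowed... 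Here the main technical obstacle is bookkeeping the support: one must ensure that the inductive step consumes exactly one unit of budget and that the entry it consumes is genuinely new and drawn from $\{2,\ldots,n\}$, so that after the full recursion the total support (including index $1$) is at most $k$. I would handle this by always applying Theorem~\ref{thm_semigroup_m_1_n-1} to the sub-instance $(a_1, a_2,\ldots,a_{n'})$ where $n'$ shrinks by one each round, so the newly used index is strictly larger than all previously-forbidden ones in a consistent ordering, guaranteeing distinctness; the residual bound $\tfrac{1}{2^{n'-1}}a_1$ from Theorem~\ref{thm_semigroup_m_1_n-1} then feeds the geometric series. The final inequality $\app_k(\ba)\le\app_{a_1,k}(\ba)$ is immediate from the definitions, as already noted in the text.

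The step I expect to be the main obstacle is precisely this support-distinctness bookkeeping in the recursion — making sure the "at most $k-1$ extra nonzeros" is honestly respected when we iterate Theorem~\ref{thm_semigroup_m_1_n-1}, rather than accidentally reusing or double-counting indices, and confirming that the residuals shrink by the right factor at each stage so that the geometric sum collapses to $\tfrac{1}{2^{k-1}} - \tfrac{1}{2^{n-1}}$. The arithmetic of the geometric series itself is routine; the combinatorial care in the induction is where the real content lies.
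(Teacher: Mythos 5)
Your high-level instinct is right and matches the paper: iterate the one-step drop result Theorem~\ref{thm_semigroup_m_1_n-1} and sum a geometric series. But the details of your scheme, as written, would not survive a careful write-up, and the points you flag as ``the main obstacle'' are indeed where your argument breaks.

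The most important issue is that you have the direction of Theorem~\ref{thm_semigroup_m_1_n-1} backwards: it is a \emph{drop-one-index} result, not an \emph{introduce-one-index} result. Given a target expressible with all of $a_2,\ldots,a_n$ (plus freely many copies of $a_1$), it produces a new semigroup element within $a_1/2^{n-1}$ using at most $n-2$ of the indices $\{2,\ldots,n\}$ --- some index gets \emph{removed}. Relatedly, you propose applying the theorem to residuals $r_{i-1}$, but a residual with $|r_{i-1}| < a_1$ is in general not in the semigroup, so the theorem cannot be invoked on it. The paper (in the proof of Theorem~\ref{thm_semigroup_general_bound}, whose $m=1$ specialization is Theorem~\ref{thm_semigroup_knapsack} for integral $\ba$) instead applies the $(n-1)$-theorem to the current \emph{approximation} $\bb^{j-1}$, which is a genuine semigroup element, and to the subvector indexed by $\{1\}\cup I_{j-1}$, where $I_{j-1}$ is the current support minus index $1$. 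This produces $\bb^j$ with support $\{1\}\cup I_j$, $I_j\subsetneq I_{j-1}$, and $|\bb^j-\bb^{j-1}|\le a_1/2^{|I_{j-1}|}$; iterate until $|I_l|\le k-1$. Your prefix truncation $(a_1,\ldots,a_{n-i+1})$ is not the right subvector (the dropped index need not be the largest one), and the nested-chain structure $I_0\supsetneq I_1\supsetneq\cdots$ is exactly what makes the ``support distinctness'' bookkeeping you worry about automatic: indices are only ever removed, never re-introduced.

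Finally, the per-step errors do not simply overwrite each other --- they \emph{accumulate}. The total error is $\sum_{j}a_1/2^{|I_{j-1}|}$, and since the $|I_{j-1}|$ are distinct integers in $\{k,\ldots,n-1\}$, this is at most $\sum_{i=k}^{n-1}a_1/2^i=\bigl(1/2^{k-1}-1/2^{n-1}\bigr)a_1$. Note also that the geometric sum you displayed, $1/2^{k-1}+1/2^k+\cdots+1/2^{n-1}$, has one extra leading term and actually equals $1/2^{k-2}-1/2^{n-1}$, not the claimed $1/2^{k-1}-1/2^{n-1}$; the correct sum starts at $1/2^k$.
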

	Let us first comment on the question whether the bounds in Theorems~\ref{thm_semigroup_m_1_n-1} and  \ref{thm_semigroup_knapsack} are tight. 
	It can be shown that the bound in Theorem~\ref{thm_semigroup_m_1_n-1} is tight, even when $\ba$ is integral. More generally, we cannot get a better upper bound than $1/2^k\cdot a_1$.
	\begin{proposition}
		\label{prop_semigroup_lower_bound_m_1_n-1}
		Let $n\geq 2$ and $1\leq k\leq n - 1$. There exists $\ba\in \Z^{n}$ with $0< a_1\leq a_2\leq\cdots\leq a_n$ such that 
		\begin{align*}
			\frac{1}{2^{k}}\cdot a_1\leq \app_{k}(\ba).
		\end{align*}
	\end{proposition}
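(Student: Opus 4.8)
The plan is to reduce the statement to the ``square'' case $n=k+1$ and then exhibit an explicit instance there. I will show: for every $d\ge 2$ there is $\bm c=(c_1,\dots,c_d)\in\Z^d$ with $0<c_1\le\cdots\le c_d$ and $c_1=2^{d-1}$, together with a point $\beta\in\bm c^{\top}\Z^d_{\ge0}$, such that every $\bx\in\Z^d_{\ge0}$ with $|\supp(\bx)|\le d-1$ satisfies $|\bm c^{\top}\bx-\beta|\ge 1=c_1/2^{d-1}$. Granting this and taking $d=k+1$, I pad: set $\ba:=(c_1,\dots,c_{k+1},M,\dots,M)\in\Z^n$ with $n-k-1$ copies of a large integer $M$ (chosen so that $M\ge c_{k+1}$ and $M>\beta$), so that $a_1=c_1=2^k$. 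The point $\beta$ still lies in $\ba^{\top}\Z^n_{\ge0}$, and if $\bx\in\Z^n_{\ge0}$ has $|\supp(\bx)|\le k$, then either $\bx$ activates an $M$-column, whence $\ba^{\top}\bx\ge M>\beta$ and $|\ba^{\top}\bx-\beta|\ge 1$; or $\bx$ is supported on the first $k+1$ coordinates with at most $k<k+1$ of them nonzero, so $\ba^{\top}\bx$ is a non-negative combination of at most $k$ of $c_1,\dots,c_{k+1}$ and $|\ba^{\top}\bx-\beta|\ge 1$ by the property of $\bm c$. Hence $\app_k(\ba)\ge 1=a_1/2^k$.

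For the construction I take $a_i:=(2^i-1)2^{d-i}$ for $i\in[d]$; these are positive and strictly increasing (as $a_{i+1}/a_i=(2^{i+1}-1)/(2^{i+1}-2)>1$) with $a_1=2^{d-1}$, and I set $\beta:=\sum_{i=1}^{d}a_i$, which equals $(d-1)2^{d}+1$ and obviously lies in the semigroup. Since $\beta$ and the $a_i$ are integers, it suffices to prove the combinatorial claim that $\beta$ is \emph{not} a non-negative integer combination of any $d-1$ of the $a_i$'s; then $\beta$ has distance $\ge 1$ from each such sub-semigroup, and this equals $a_1/2^{d-1}$ (by Theorem~\ref{thm_semigroup_m_1_n-1} it is in fact exactly the value, but only ``$\ge$'' is needed). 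To prove the claim, suppose $\beta=\sum_{i\ne j}a_ix_i$ with $x_i\in\Z_{\ge0}$; put $x_j:=0$ and $y_i:=x_i-1\ge -1$, so that $\sum_{i=1}^d a_iy_i=\beta-\sum_i a_i=0$ with $y_j=-1$. Thus it is enough to show that the only $(y_1,\dots,y_d)$ with $y_i\ge -1$ for all $i$ and $\sum_i a_iy_i=0$ is the zero vector.

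To do that I substitute $z_i:=y_i+1\ge 0$ and use $a_i=2^d-2^{d-i}$; with $Z:=\sum_i z_i$ and $S:=Z-d+1$ the equation becomes $\sum_{i=1}^d 2^{d-i}z_i=2^dS-1$ together with $Z=S+d-1$, and non-negativity of the left side forces $S\ge 1$. Now I run a binary carry computation on the identity $1+\sum_{i=1}^d 2^{d-i}z_i=2^dS$: writing $w_l:=z_{d-l}$ for $l=0,\dots,d-1$, set $C_0:=1$ and $C_{l+1}:=(C_l+w_l)/2$, and verify inductively that each $C_l+w_l$ is even (forced by $2^dS$ being divisible by a high power of $2$), that $C_l\ge 1$, and that $C_d=S$. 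Since $w_l=2C_{l+1}-C_l$, summing over $l$ telescopes to
\begin{equation*}
S+d-1=\sum_{l=0}^{d-1}w_l=\sum_{l=1}^{d-1}C_l+2C_d-C_0=\sum_{l=1}^{d-1}C_l+2S-1 ,
\end{equation*}
hence $\sum_{l=1}^{d-1}C_l=d-S$; as the left side is a sum of $d-1$ integers each $\ge 1$, this gives $d-S\ge d-1$, so $S=1$. Then every $C_l=1$, so $w_l=2\cdot 1-1=1$, i.e.\ $z_i=1$ and $y_i=0$ for all $i$, which proves the claim and the proposition.

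The only real obstacle is this combinatorial claim, and inside it the telescoped bookkeeping identity $\sum_{l=1}^{d-1}C_l=d-S$: coupling the carry relation with the digit-sum constraint $Z=S+d-1$ is exactly what squeezes $S$ down to $1$ and kills every nontrivial representation. Everything else — the padding reduction, the monotonicity of the $a_i$, and the inductive check of the carry recursion — is routine, the only place to be careful being the boundary terms $C_0$ and $C_d$ when telescoping.
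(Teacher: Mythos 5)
Your proof is correct, and it parallels the paper's strategy at a high level — fix an explicit instance with $a_1 = 2^k$, take $\beta := \sum_i a_i$ as the hard target, show via a binary argument that $\beta$ has no representation using fewer than $k+1$ of the generators, and pad with large numbers for $n > k+1$ — but the concrete instance and the number-theoretic core are genuinely different. The paper sets $a_i = 2^{k+1} + 2^{i-2}$ for $i = 2,\ldots,k+1$ and appeals to a standalone lemma (Lemma~\ref{lemma_semigroup_lower_bound}): the congruence $\sum_i \lambda_i 2^i \equiv 2^{k+1}-1 \pmod{2^{k+1}}$ combined with the a priori weight bound $\sum_i\lambda_i \le k+1$ forces all $\lambda_i = 1$; the weight bound is supplied separately by observing that using any $a_i$ with $i\ge 2$ more than $k$ times already overshoots $\beta$. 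Your instance $c_i = (2^i-1)2^{d-i}$ (with $d=k+1$) is different, and your carry-propagation identity does double duty: the telescoped equation $\sum_{l=1}^{d-1}C_l = d-S$, together with $C_l\ge 1$, simultaneously produces the size bound $S\le 1$ and the digit determination $w_l = 1$, so no separate magnitude argument is needed to cap $\sum z_i$. I checked the carry recursion, the telescoping, and the padding reduction, and they are sound (including the boundary terms $C_0=1$, $C_d=S$, and the parity/positivity induction for $C_l$). Both routes are elementary and of comparable length; the paper's has the advantage of isolating a reusable lemma that it also invokes later in Proposition~\ref{prop_semigroup_lower_bound_general}, while yours is more self-contained in that the sparsity constraint emerges internally from the bookkeeping rather than being imposed up front.
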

	For $k = 1$, Theorem~\ref{thm_semigroup_knapsack} can be strengthened to $\app_{a_1,1}(\ba)\leq 1/2\cdot a_1$, based on the observation that we can write every integer as an integer multiple of $a_1$ plus or minus a remainder of size at most $1/2\cdot a_1$. This construction is tight by Proposition~\ref{prop_semigroup_lower_bound_m_1_n-1} applied to $k=1$. These observations for $k\in\lbrace 1,n-1\rbrace$ suggest that $\app_k(\ba) \leq 1/2^k\cdot a_1$ might be the correct bound for all $1\leq k \leq n - 1$.  
	This is, however, not true, even when $k = 2$. 
	In fact, Theorem~\ref{thm_semigroup_approx_2} below presents a tight bound for $k = 2$. 
	To prepare for the presentation of this bound, let us  define the sequence $t_0 := 1$ and $t_i := t_{i-1}\cdot (t_{i-1} + 1)$ for $i\geq 1$. This sequence $\lbrace t_i\rbrace_{i\in \N}$ is well-known and appears in several papers related to combinatorial optimization problems; see, for instance,  \cite{galawoeginger1995on-linebinpacking,kohlikrishna1992greedyknapsack,seidenwoeginger2005twodimcuttingstock}. Moreover, the sequence $\lbrace t_i + 1\rbrace_{i\in \N}$ is known as \emph{Sylvester's sequence}. It gives among others the optimal solution to approximating $1$ by Egyptian fractions from below; see \cite{curtiss1922kelloggs,nathanson2023underapproxegyptian,oeissylvester,soundararajan2005approx_one_below} for some appearances of Sylvester's sequence in the literature. For $n\in \N$, we define 
	\begin{align*}
		\varphi(n) := \sum_{i=1}^n \frac{1}{t_i}.
	\end{align*}
	
	\begin{theorem}
		\label{thm_semigroup_approx_2}
		Let $\ba\in\R^{n}$ with $0 < a_1\leq \cdots\leq a_n$. Then we have
		\begin{align*}
			\app_2(\ba)\leq \app_{a_1,2}(\ba)\leq\frac{\varphi(n - 2)}{2\varphi(n - 2) + 1}\cdot a_1.
		\end{align*}
	\end{theorem}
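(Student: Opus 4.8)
The plan is to fix the smallest generator $a_1$ and analyze $\app_{a_1,2}(\ba)$ directly: given an arbitrary target $b = \ba^\top\bz$ with $\bz\in\Z^n_{\geq 0}$, we must find an approximation $x_1 a_1 + x_j a_j$ (with $x_1\in\Z_{\geq 0}$, $x_j\in\Z_{\geq 0}$, and at most one index $j\neq 1$) that is within $\frac{\varphi(n-2)}{2\varphi(n-2)+1}\cdot a_1$ of $b$. Since multiples of $a_1$ already approximate any real to within $\tfrac12 a_1$, the real obstruction is the worst-case residue $r\in(0,\tfrac12 a_1]$ that we cannot correct using $a_1$ alone; the single extra generator $a_j$ must then be chosen so that $x_j a_j$ lands close to $r$ modulo $a_1$. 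So I would reduce everything to the following one-dimensional covering problem: working in $\R/a_1\Z$, the points $0, a_2 \bmod a_1, \ldots, a_n \bmod a_1$ (together with their integer multiples, but the multiples of a fixed residue just trace out a subgroup) must have the property that every point of $[0,a_1)$ is within the claimed bound of one of these residues or of $0$. The worst case is when the nonzero residues $a_2\bmod a_1,\ldots,a_n\bmod a_1$ are placed adversarially to push some point of the circle as far as possible from all of them and from $0$.

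The key step is to recognize that this adversarial placement is governed exactly by the recursion $t_0=1$, $t_i=t_{i-1}(t_{i-1}+1)$. Here is the heuristic: normalize $a_1=1$, so we are covering the circle $\R/\Z$ with the subgroup generated by $0$ and at most $n-2$ further residues $\beta_2,\ldots,\beta_{n-1}$ (one of the $n-1$ non-first generators is "used up" by scaling considerations, matching the $n-2$ in $\varphi(n-2)$). To keep a point far from $0$ and all $\beta_i$, the adversary recursively subdivides: if the current uncovered gap has half-length $\rho$, the best the adversary can do against the next generator is to place it so that the new worst half-length shrinks according to a relation of the form $\rho_{\text{new}} = \rho \cdot \frac{\rho}{\rho + \text{(something)}}$, which after the substitution $\rho_i = \tfrac12\big/ \big(1 + \text{partial sum of } 1/t_j\big)$ collapses to the Sylvester recursion; one verifies by induction that after using generators corresponding to $t_1,\ldots,t_{n-2}$ the optimal remaining half-length is $\frac{1}{2}\cdot\frac{1}{1+\varphi(n-2)} = \frac{\varphi(n-2)}{2\varphi(n-2)+1}\cdot\frac{1}{1}$... wait, that needs $\frac{1}{2(1+\varphi(n-2))} = \frac{\varphi(n-2)}{2\varphi(n-2)+1}$, which I should double-check — more likely the correct bookkeeping is $\rho_i = 1\big/\big(2 + 2\varphi(i)\big)$ is wrong and instead the partial sums $\varphi$ enter so that the final value is exactly $\frac{\varphi(n-2)}{2\varphi(n-2)+1}$. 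The precise constant-chasing is where I expect to spend effort, but the structural claim — that the extremal configuration is the greedy/Sylvester one — is the heart of the matter.

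Concretely I would proceed as follows. (i) Reduce to $a_1 = 1$ and to bounding, for each residue class, the distance from an arbitrary $r\in[0,1]$ to the set $S := \{0\}\cup\bigcup_{j=2}^{n}(\{a_j\}\bmod 1)\cdot\Z_{\geq 0}$, noting we only get to pick one $j$ and all its nonneg multiples. (ii) Argue the worst case uses each available generator "once" in a nested fashion, so it suffices to understand the greedy covering of an interval by $n-2$ adaptively chosen points together with the endpoint. (iii) Set up the recursion for the worst-case half-gap $\rho_i$ after $i$ generators, show $\rho_0 = \tfrac12$ and $\rho_i = \rho_{i-1}^2/(\rho_{i-1}+\cdots)$ reduces under $\rho_i = 1/(2 c_i)$ to $c_i = c_{i-1} + c_{i-1}^{-1}\cdot(\text{integer data})$, matching $c_i = 1 + \varphi(i)$ via the identity $t_{i} = t_{i-1}^2 + t_{i-1}$ and $\sum_{j\le i} 1/t_j = 1 - 1/t_i + 1/t_i = \ldots$ (the telescoping $1/t_{i-1} - 1/t_i = 1/t_{i-1}^2\cdot$ correction). (iv) Conclude $\app_{a_1,2}(\ba)\le \rho_{n-2}\cdot a_1 = \frac{\varphi(n-2)}{2\varphi(n-2)+1}a_1$, and use $\app_2(\ba)\le\app_{a_1,2}(\ba)$. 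The main obstacle is step (iii): correctly identifying the recursion for the adversarial gap and verifying it telescopes to $\varphi$; a secondary subtlety is handling the integrality of the $x_j$ (not just real multiples), which forces the "$+1$" in the $t_{i-1}+1$ factor and must be tracked carefully rather than waved away.
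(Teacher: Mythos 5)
Your proposal has the right intuition about the extremal role of Sylvester's sequence, but it stops well short of a proof, and the route you sketch is not the one the paper takes. The paper first reduces to the single target $b = a_2 + \cdots + a_n$ (this WLOG is nontrivial: any $b$ with $\lambda_i > 1$ is handled by replacing $a_i$ by $\lambda_i a_i$, and one may drop generators with $\lambda_i = 0$), then writes $a_i = z_i a_1 + r_i$ with $r_i\in(-a_1/2,a_1/2]$, restricts to the index set $I$ of nonnegative residues, and crucially packages the resulting optimization as an \emph{integer knapsack problem}
\[
\max \sum_{i\in I} r_i x_i \quad\text{s.t.}\quad \sum_{i\in I}\frac{x_i}{\tilde s_i+1} < 1,\ x_i\in\Z_{\ge 0},
\]
where $\tilde s_i := \lfloor \sum_{j\in I} a_j / a_i\rfloor$ encodes how many copies of $a_i$ the non-negativity of the representation allows. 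The Sylvester bound $\varphi(n-2)$ is then imported wholesale from a result of Kohli and Krishnamurti on the greedy heuristic for this knapsack, rather than being rederived from a covering recursion. The remainder of the proof is a delicate contradiction argument involving the least positive residue $\bar b$ of $b$ modulo $a_1$, the quantity $\tilde s_{i^*} r_{i^*}$, and the threshold $\epsilon$.

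The genuine gaps in your sketch are the following. (1) You never actually set up or verify the recursion you claim governs the adversarial gap; you yourself flag step (iii) as the obstacle and leave it as a heuristic. Without a concrete recurrence proven to telescope to $\frac{\varphi(n-2)}{2\varphi(n-2)+1}$, there is no proof. (2) Your covering reformulation on $\R/a_1\Z$ ignores the constraint that $x_1 \ge 0$, i.e.\ that $x_j a_j \le b$ so that the coefficient of $a_1$ stays nonnegative. This is exactly what the $\tilde s_i$ (and later $s_{i^*}$) bounds in the paper enforce, and it is where the ``$+1$'' in $t_{i-1}(t_{i-1}+1)$ actually comes from; waving at ``integrality of the $x_j$'' doesn't capture it. (3) You never make the reduction to a single canonical worst-case $b$; working with ``an arbitrary $r\in[0,a_1)$'' is not equivalent to maximizing over $b\in\ba^\top\Z^n_{\ge 0}$, because the achievable residue sum $\sum r_i$ is constrained by the representation of $b$. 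In short, the adversarial-covering picture may well be an alternative route (and it does explain why the tight instance in Proposition~\ref{prop_semigroup_lower_bound_m_1_n_2} is Sylvester-like), but turning it into a proof would require supplying precisely the bookkeeping the paper instead outsources to the Kohli--Krishnamurti knapsack theorem.
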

	Note that for $n=3$ we get an upper bound of $1/4\cdot a_1$, for $n=4$ we get $2/7\cdot a_1$ and $29/100\cdot a_1$ for $n = 5$. In the limit, we obtain $\lim_{n\to\infty}\varphi(n) \approx 0.691$ which gives us a bound of approximately $0.2901\cdot a_1$. This improves upon the upper bound of $1/2\cdot a_1$ for $k=2$ and $n\to \infty$ given in Theorem~\ref{thm_semigroup_knapsack}. The upper bound in Theorem~\ref{thm_semigroup_approx_2} is tight.
	\begin{proposition}
		\label{prop_semigroup_lower_bound_m_1_n_2}
		Let $n\geq 2$. There exists $\ba\in \Z^{n}$ with $0<a_1\leq a_2\leq\cdots\leq a_n$ such that 
		\begin{align*}
			\frac{\varphi(n-2)}{2\varphi(n-2) + 1}\cdot a_1 \leq \app_2(\ba).
		\end{align*}
	\end{proposition}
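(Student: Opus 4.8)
The plan is to exhibit, for each $n\ge 2$, an explicit instance $\ba=\ba^{(n)}\in\Z^n$ with $0<a_1\le\cdots\le a_n$ together with a target $b=b^{(n)}\in\ba^\top\Z^n_{\ge0}$ such that no $2$-sparse nonnegative integer combination of the columns of $\ba$ comes within $\frac{\varphi(n-2)}{2\varphi(n-2)+1}\,a_1$ of $b$; by definition of $\app_2(\ba)$ this is exactly what is required, and these will be the tight instances underlying Theorem~\ref{thm_semigroup_approx_2}. It is convenient to normalize the construction so that the target value becomes an integer: since $t_i\mid t_{n-2}$ for $i\le n-2$, the number $P_n:=\sum_{i=1}^{n-2} t_{n-2}/t_i$ is a positive integer with $\varphi(n-2)=P_n/t_{n-2}$, so taking $a_1:=2P_n+t_{n-2}$ yields $\frac{\varphi(n-2)}{2\varphi(n-2)+1}\,a_1=P_n$. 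Using $t_{n-2}=t_{n-3}(t_{n-3}+1)$ one checks the recursion $a_1^{(2)}=1$ and $a_1^{(n)}=(t_{n-3}+1)\,a_1^{(n-1)}+2$ for $n\ge 3$, which suggests building $\ba^{(n)}$ and $b^{(n)}$ recursively from $\ba^{(n-1)}$ and $b^{(n-1)}$. The cases $n\in\{2,3\}$ are checked directly: for $n=2$ the bound is $0$ and there is nothing to prove, and for $n=3$ one may take $\ba^{(3)}=(4,6,15)^\top$ and $b^{(3)}=25$, whose only $2$-sparseness obstruction is that $25$ is not a nonnegative integer combination of any two of $4,6,15$, so $\app_2(\ba^{(3)})\ge 1=P_3$.

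In the inductive step the generators $a_2,\dots,a_n$ are assigned prescribed residues modulo $a_1$ following a Sylvester-driven pattern together with rapidly increasing magnitudes, and $b$ is given a prescribed residue modulo $a_1$ inside $[P_n,a_1-P_n]$ (so a multiple of $a_1$ alone is already $\ge P_n$ from $b$). The estimate then splits into three cases according to which at most two columns a candidate $\bx\in\Z^n_{\ge0}$ uses: (a) $a_1$ together with one $a_j$, $j\ge2$; (b) two of $a_2,\dots,a_n$; (c) a single generator. Case (a) is the heart of the argument: if $\bx$ uses $a_1$ with multiplicity $p$ and $a_j$ with multiplicity $q$, then $pa_1+qa_j\equiv qa_j\pmod{a_1}$, so minimizing over $p\ge0$ reduces, for the finitely many relevant $q$, to showing $\mathrm{dist}(b-qa_j,\,a_1\Z)\ge P_n$; the magnitudes of the $a_j$ are calibrated so that the set of relevant multiplicities of $a_j$ is exactly as large as the residue pattern can absorb while keeping every $b-qa_j$ at distance $\ge P_n$ from $a_1\Z$. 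It is precisely here that the recursion $t_i=t_{i-1}(t_{i-1}+1)$ makes the bookkeeping close up and where the inductive hypothesis for dimension $n-1$ enters. Cases (b) and (c) are handled by a common-divisor / modular obstruction that keeps any such combination at distance $\ge P_n$ from $b$, a single generator being even weaker, while $b\in\ba^\top\Z^n_{\ge0}$ is ensured by manufacturing $b^{(n)}$ as an explicit nonnegative combination of the columns with each of $a_2,\dots,a_n$ used a controlled number of times (so as not to violate the count in case (a)).

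The main obstacle is entirely in case (a) and is two-fold. First, one must pin down the construction so that all three cases hold simultaneously with the same tight constant $P_n$: this forces a delicate balance between making each $a_j$ large enough that it cannot be used too often (what case (a) demands) and small enough that $b$ remains a controlled nonnegative combination of the columns (what representability demands), a tension resolved exactly by the growth rate of Sylvester's sequence. Second, one must carry out the induction in case (a), i.e.\ show that no multiplicity $q$ of $a_j$ can pull the residual below $P_n$; this is the number-theoretic core and is the mirror image of the greedy argument behind the upper bound of Theorem~\ref{thm_semigroup_approx_2}.
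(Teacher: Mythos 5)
Your proposal correctly identifies the right shape of the problem (Sylvester-driven residues modulo $a_1$ and a case split on which columns a $2$-sparse candidate uses), and the normalization $a_1=2P_n+t_{n-2}$ with $P_n=\sum_{i=1}^{n-2}t_{n-2}/t_i$ is a clean observation (the recursion $a_1^{(n)}=(t_{n-3}+1)a_1^{(n-1)}+2$ also checks out), as does the $n=3$ instance $(4,6,15)$ with target $25$. However, what you hand in is a plan, not a proof: the construction of $a_2,\ldots,a_n$ and of $b$ for general $n$ is never specified beyond ``prescribed residues following a Sylvester-driven pattern,'' and the entire number-theoretic core of case (a) — which you yourself flag as ``the main obstacle'' — is left undone. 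That gap is the whole proposition.

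Two structural concerns suggest the outline as stated will not close. First, the induction you propose rests on the ``inductive hypothesis for dimension $n-1$,'' but that hypothesis gives the \emph{strictly smaller} threshold $\varphi(n-3)/(2\varphi(n-3)+1)$, not the $\varphi(n-2)/(2\varphi(n-2)+1)$ you need; it is not clear how a weaker lower bound in dimension $n-1$ can be lifted to the stronger one in dimension $n$, and the paper avoids this entirely by giving a direct argument for all $n\ge4$. Second, your cases (b) and (c) (combinations avoiding $a_1$) are dismissed via an unspecified ``common-divisor / modular obstruction,'' but with $a_1$ a fixed small integer and the other $a_j$ merely ``rapidly increasing,'' there is no obvious mechanism that keeps every pair $q\,a_i+q'\,a_j$ ($i,j\ge2$) at distance $\ge P_n$ from $b$. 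The paper resolves this by building $a_i=\tau z_i+r_i$ with a free parameter $\tau$ that can be taken arbitrarily large: then any combination omitting $a_1$ differs from $b$ by $|\tau d+e|$ with $d\ne0$ (by divisibility of the $z_i=\prod_{j\neq n-i}(t_j+1)$), so its error blows up with $\tau$. Your construction has no such parameter, so this case needs a genuinely new argument. For case (a) itself, what is actually needed is the bound $s_i\le t_{n-i}$ on how many times $a_i$ can be used, which in the paper follows from the identity $\sum_{j=0}^{n-2}1/(t_j+1)=1-1/(t_{n-1}+1)$; this computation, or a replacement for it, is missing from your write-up.
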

	Our upper bounds in Theorem~\ref{thm_semigroup_m_1_n-1} and Theorem~\ref{thm_semigroup_knapsack} under the assumption that $\ba\in \Z^n$ follow from their extensions to general values of $m$ that we present in the next section. 
	We prove Theorem~\ref{thm_semigroup_approx_2} in Section~\ref{s_proof_thm_semigroup_approx_2}. The tight constructions, Propositions~\ref{prop_semigroup_lower_bound_m_1_n-1} and \ref{prop_semigroup_lower_bound_m_1_n_2}, are proven in Section~\ref{s_semigroups_lower_bounds}.
	
	\section{General Semigroups: $m\geq 2$}
	Let us now turn to the  extensions of Theorem \ref{thm_semigroup_m_1_n-1} and Theorem \ref{thm_semigroup_knapsack} to general values of  $m\geq 2$. Our strategy to extend these results is
	motivated by the analysis when $m=1$: We ensure that the cone $\pos\lbrace \ba_1,\ldots,\ba_n\rbrace$ is simplicial. To do so, we fix an invertible $m\times m$ submatrix $\bB$ of $\bA$. Without loss of generality let $\bB = (\ba_1,\ldots,\ba_m)$. Then we suppose that $\ba_i\in\pos\lbrace \ba_1,\ldots,\ba_m\rbrace$ for each $i\in\lbrack n\rbrack$. This assumption resembles the case $m=1$, where we fixed $\bB = a_1$ and also assumed that $a_i \in\pos\lbrace a_1\rbrace$. 
	We measure our approximation error in terms of the symmetric parallelepiped spanned by $\ba_1,\ldots,\ba_m$, which is defined by $P(\bB) := \bB\cdot \lbrack -1,1 \rbrack^m$. Our errors will be taken with respect to the norm $\Vert\cdot\Vert_{P(\bB)}$, the norm with unit ball $P(\bB)$. 
	This is also in correspondence with what we did when $m=1$ because, if $m=1$, we have $\bB = a_1$ and the rescaled norm induced by $1/a_1\cdot P(a_1) = \lbrack -1,1\rbrack$ equals simply the absolute value. 
	As before, let us abbreviate $\app_{\bA\Z^n_{\geq 0},k,\Vert\cdot\Vert_{P(\bB)}}(\bA)$ by $\app_k(\bA)$ for a fixed value of $k\geq m$. 
	We are now prepared to investigate the refined parameter
	\begin{align*}
		\app_{\bB,k}(\bA) := \max_{\bb\in\bA\Z^n_{\geq 0}}\min \left\lbrace \Vert \bA\bx - \bb\Vert_{P(\bB)} : \bx\in \Z^n_{\geq 0}, \ \left|\supp(\bx)\backslash \lbrack m \rbrack \right|\leq k - m\right\rbrace.
	\end{align*}
	
	\begin{theorem}
		\label{thm_semigroup_approx_n-1}
		Let $\bA = (\ba_1,\ldots,\ba_n)\in\Z^{m\times n}$ have full row rank, $m \leq n - 1$, $\bB = (\ba_1,\ldots,\ba_m)$ be invertible, $\mu = \max_{i\in\lbrack n\rbrack}\Vert\ba_i\Vert_{P(\bB)}$, and $\ba_i\in\pos\lbrace\ba_1,\ldots,\ba_m\rbrace$ for all $i\in\lbrack n\rbrack$. Then we have 
		\begin{align*}
			\app_{n-1}(\bA)\leq\app_{\bB,n-1}(\bA) \leq \frac{2}{2^\frac{n-m}{m}} \cdot \mu^{\frac{m-1}{m}}\cdot\left|\det\bB\right|^{\frac{m-1}{m}}.
		\end{align*}
	\end{theorem}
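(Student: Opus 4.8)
\emph{Strategy.} The aim is to carry the $m=1$ pigeonhole behind Theorem~\ref{thm_semigroup_m_1_n-1} into several dimensions. First I would normalise the geometry. Since $\|\bv\|_{P(\bB)}=\|\bB^{-1}\bv\|_\infty$, the map $\bB^{-1}$ is an isometry between $(\R^m,\|\cdot\|_{P(\bB)})$ and $(\R^m,\|\cdot\|_\infty)$, and it sends $\bA\Z^n_{\ge0}$ to the semigroup generated by $\be_1,\dots,\be_m$ together with $\bar\ba_i:=\bB^{-1}\ba_i$ for $i>m$. The hypothesis $\ba_i\in\pos\{\ba_1,\dots,\ba_m\}$ gives $\bar\ba_i\in\R^m_{\ge0}$, while $\|\bar\ba_i\|_\infty=\|\ba_i\|_{P(\bB)}\le\mu$, and Cramer's rule forces $\Delta\bar\ba_i\in\Z^m$ for $\Delta:=|\det\bB|$; that is, the entries of $\bar\ba_i$ are multiples of $1/\Delta$ (this is where integrality of $\bA$, and hence the factor $|\det\bB|$, enters). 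It then suffices to approximate, in $\|\cdot\|_\infty$, any $\bar\bb$ in this semigroup by a non-negative integer combination that uses $\be_1,\dots,\be_m$ freely and at most $n-1-m$ of the $\bar\ba_i$.

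\emph{Reduction to a small combination near $\Z^m$.} Given a representation $\bar\bb=\bz+\sum_{i>m}c_i\bar\ba_i$ with $\bz\in\Z^m_{\ge0}$, $c_i\in\Z_{\ge0}$, choose a non-empty $S\subseteq\{m+1,\dots,n\}$ and keep only the generators outside $S$: set $\by:=\sum_{i>m,\,i\notin S}c_i\bar\ba_i$. Then $0\le\by\le\bar\bb$, so $\bar\bb-\by$ rounds to a point of $\Z^m_{\ge0}$, and the resulting approximant uses exactly the $\bar\ba_i$ with $i\notin S$, hence at most $n-m-1$ of them; its error equals $\mathrm{dist}_\infty\big(\sum_{i\in S}c_i\bar\ba_i,\ \Z^m\big)$. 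Since $\bar\bb$ admits many representations (one may add lattice relations, in particular drive some $c_i$ to $0$), the task reduces to: choose a representation and a non-empty $S$ for which $\sum_{i\in S}c_i\bar\ba_i$ lies within $2^{-(n-m)/m}(\mu\Delta)^{(m-1)/m}$ of $\Z^m$. Clearing denominators, this is a statement about a non-empty subset sum of the integer points $c_i(\Delta\bar\ba_i)$ being close to the sublattice $\Delta\Z^m$.

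\emph{Pigeonhole.} For this last step I would order the $c_i$ copies of the $\Delta\bar\ba_i$ into a sequence in which all copies of whichever generator is to be discarded occur consecutively, form the partial sums of the sequence modulo $\Delta\Z^m$, and partition the torus $\R^m/\Delta\Z^m$ (of volume $\Delta^m$) into roughly $2^{n-m}$ axis-parallel boxes whose side lengths are proportioned to the box $[0,\mu\Delta]^m$ containing the vectors $\Delta\bar\ba_i$, so that each box has $\ell_\infty$-diameter at most $2^{-(n-m)/m}(\mu\Delta)^{(m-1)/m}\Delta$. With enough partial sums available — using that the coefficients are large, or, when they are not, first deleting the corresponding generators at no cost and repeating with fewer of them, which only improves the bound — two partial sums fall in a common box; the combination spanned by the block between them is non-empty, lies within the required distance of $\Delta\Z^m$, and, by the choice of ordering, exhausts all copies of one generator, so $S$ may be taken to be that block. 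Rescaling and applying $\bB$, whose action divides the $\ell_\infty$-error by $\Delta$, then yields the asserted estimate.

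\emph{Main obstacle.} The crux is the calibration of this box decomposition of $\R^m/\Delta\Z^m$: one must simultaneously (i) produce enough partial sums to force a collision inside a box of diameter $2^{-(n-m)/m}(\mu\Delta)^{(m-1)/m}\Delta$ — rather than the coarser $2^{-(n-m)/m}\Delta$ a uniform cube decomposition would deliver, which is where the exponent $\tfrac{m-1}{m}$ and the box $[0,\mu\Delta]^m$ must be exploited — and (ii) guarantee that the collision's block is long enough, and so placed, as to exhaust an entire generator, so that the sparsity constraint $|\supp(\bx)\setminus[m]|\le n-1-m$ is met. The case $m=1$ is the guiding model: there $\R^m/\Delta\Z^m$ is a circle of length $\Delta=a_1$, the factor $\mu$ disappears, and the argument collapses to the pigeonhole of Theorem~\ref{thm_semigroup_m_1_n-1}, giving $\app_{a_1,n-1}(\ba)\le a_1/2^{n-1}$.
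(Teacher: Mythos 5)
Your normalization by $\bB^{-1}$ and the reduction ``drop a non-empty subset $S$ of the generators and round the remainder'' are sound, and they do capture the right shape of the statement: the problem becomes finding a non-empty $S\subseteq\{m+1,\dots,n\}$ with $\sum_{i\in S}c_i\bar\ba_i$ close to $\Z^m$. But the pigeonhole you propose to finish with does not work, and for the reason you yourself flag as the ``main obstacle.'' Forming prefix sums of an ordered sequence of individual copies and looking for a collision in the torus $\R^m/\Delta\Z^m$ produces a \emph{consecutive block} of copies, and there is no way to arrange in advance that this block will consist of \emph{all} copies of some generator -- yet that is exactly what the sparsity constraint $|\supp(\bx)\setminus[m]|\le n-m-1$ requires. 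Ordering ``so that all copies of whichever generator is to be discarded occur consecutively'' is circular, since the generator to be discarded is determined only after the collision is found. There is also a counting mismatch: after you drive $\sum c_i$ down (which you must, since the semigroup element can have arbitrarily many representations and one needs a bounded set to tile), you have at most $\Delta$ prefix sums, not the $2^{n-m}$ you need; and conversely, if you keep $\sum c_i$ large, you forfeit the bounded fundamental domain. Finally, the claimed box diameter $2^{-(n-m)/m}(\mu\Delta)^{(m-1)/m}\Delta$ cannot come out of a volume count on a torus of side $\Delta$: the factor $\mu$ simply has no footprint in $\R^m/\Delta\Z^m$, and a uniform cube decomposition would in fact give the \emph{smaller} diameter $2^{-(n-m)/m}\Delta$, which is already better than the theorem's bound -- a tell-tale sign that the sparsity constraint has been dropped somewhere.

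The paper resolves both difficulties at once, and the mechanism is exactly the one your approach is missing. First, a pigeonhole in cosets of $\bB\Z^m$ (Lemma~\ref{lemma_general_semigroup_small_vectors}) reduces to representations with $\sum\lambda_i\le\Delta$, so that all $2^{n-m}$ \emph{subset} sums $\sum_{i\in I}\lambda_i\ba_i$ (not prefix sums of copies!) lie in the bounded fundamental domain $\mu\Delta\cdot\parr\{\ba_1,\dots,\ba_m\}$, not in a torus. Subset sums are what make the sparsity drop automatic: given $I\ne J$ with $j^*\in J\setminus I$, the new vector $\bb+\bm{\mu}^\top(\ba_1,\dots,\ba_m)+\sum_{I}\tilde\ba_i-\sum_J\tilde\ba_j$ is still a non-negative integer combination and its coefficient on $\ba_{j^*}$ is zero. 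But this requires the translate $\bm{\mu}\in\Z^m_{\ge0}$ to be \emph{non-negative}, and that is where the torus picture fails: one cannot identify all $\Delta\Z^m$-translates, only the non-negative ones. The paper keeps the partial order $\preceq$ on the translates of each small tile and bounds the number of pairwise incomparable ones via Dilworth's theorem (Lemma~\ref{lemma_semigroup_partial_order}), obtaining the factor $(\mu\Delta)^{m-1}$. The pigeonhole is then $2^{n-m}\ge(\mu\Delta)^{m-1}l^m$, giving error $1/l=\big((\mu\Delta)^{m-1}/2^{n-m}\big)^{1/m}$, which is precisely the asserted bound. So the exponent $\tfrac{m-1}{m}$ that puzzled you is not a property of a cleverer box decomposition; it is the price paid for restricting the collision to an order-comparable pair of tiles. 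Without the antichain count your argument would only ever prove the bound without the $\mu^{(m-1)/m}|\det\bB|^{(m-1)/m}$ factor, but at the cost of not being a valid construction.
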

	Theorem~\ref{thm_semigroup_approx_n-1} implies  Theorem~\ref{thm_semigroup_m_1_n-1} when $m=1$ and $\ba \in \Z^n$.
	As for $m=1$, Theorem~\ref{thm_semigroup_approx_n-1} applied repeatedly allows us to  observe that the approximation factor decreases exponentially  as $k$ goes to $n$.
	\begin{theorem} 
		\label{thm_semigroup_general_bound}
		Let $\bA = (\ba_1,\ldots,\ba_n)\in\Z^{m\times n}$ have full row rank. Let $\bB = (\ba_1,\ldots,\ba_m)$ be invertible, $\mu = \max_{i\in\lbrack n\rbrack}\Vert\ba_i\Vert_{P(\bB)}$, and $\ba_i\in\pos\lbrace\ba_1,\ldots,\ba_m\rbrace$ for all $i\in\lbrack n\rbrack$. Then we have
		\begin{align*}
			\app_k(\bA)\leq \app_{\bB,k}(\bA) \leq \frac{{2}}{2^{\frac{1}{m}} - 1}\cdot \left(\frac{1}{2^{\frac{k-m}{m}}} - \frac{1}{2^{\frac{n-m}{m}}}\right)\cdot \mu^{\frac{m-1}{m}}\cdot \left|\det\bB\right|^{\frac{m-1}{m}}
		\end{align*}
		for all $m\leq k\leq n$.
	\end{theorem}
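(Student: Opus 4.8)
The plan is to derive Theorem~\ref{thm_semigroup_general_bound} from Theorem~\ref{thm_semigroup_approx_n-1} by applying the latter repeatedly, each time to a slightly smaller subset of the columns of $\bA$ while always retaining $\ba_1,\dots,\ba_m$ — hence keeping the same basis $\bB$, the same norm $\Vert\cdot\Vert_{P(\bB)}$, and a value of the parameter $\mu$ that can only decrease. Writing $C:=\mu^{\frac{m-1}{m}}\,\vert\det\bB\vert^{\frac{m-1}{m}}$, the asserted bound is $\frac{1}{2^{1/m}-1}\bigl(2^{-(k-m)/m}-2^{-(n-m)/m}\bigr)\,C$, which I will recognize at the very end as a finite geometric series. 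The inequality $\app_k(\bA)\le\app_{\bB,k}(\bA)$ is immediate from the definitions, since the feasibility region $\{\,\vert\supp(\bx)\setminus[m]\vert\le k-m\,\}$ is contained in $\{\,\vert\supp(\bx)\vert\le k\,\}$, so it suffices to bound $\app_{\bB,k}(\bA)$.

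Fix an arbitrary $\bb\in\bA\Z^n_{\ge0}$. I would build a chain $\bb=\bb^{(0)},\bb^{(1)},\dots,\bb^{(n-k)}$ in $\R^m$ together with index sets $[n]=S_0\supseteq S_1\supseteq\dots\supseteq S_{n-k}\supseteq[m]$ with $\vert S_j\vert=n-j$, such that each $\bb^{(j)}$ is a nonnegative integer combination of $\{\ba_i:i\in S_j\}$ using at most $(n-j)-m$ columns with index outside $[m]$ (trivially true for $j=0$). Inductively, given $\bb^{(j)}$ with $j\le n-k-1$, one has $\vert S_j\vert=n-j\ge k+1\ge m+1$. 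The submatrix with columns $\{\ba_i:i\in S_j\}$ has full row rank, contains $\bB$, still satisfies $\ba_i\in\pos\{\ba_1,\dots,\ba_m\}$ for each of its columns, and its analogue of $\mu$ is at most $\mu$; thus Theorem~\ref{thm_semigroup_approx_n-1} applies to it with target $\bb^{(j)}$ and produces a vector $\bb^{(j+1)}$ that is a nonnegative integer combination of $\{\ba_i:i\in S_j\}$ using at most $(n-j)-1-m$ columns from outside $[m]$ and satisfying
\[
\Vert\bb^{(j+1)}-\bb^{(j)}\Vert_{P(\bB)}\le 2^{-\frac{(n-j)-m}{m}}\,C .
\]
Since $(n-j)-1-m<(n-j)-m=\vert S_j\setminus[m]\vert$, at least one index of $S_j\setminus[m]$ is unused, so I may choose $S_{j+1}\subseteq S_j$ with $[m]\subseteq S_{j+1}$, $\vert S_{j+1}\vert=n-j-1$, and $S_{j+1}$ containing every index actually used by $\bb^{(j+1)}$; this closes the induction.

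After $n-k$ steps, $\bb^{(n-k)}$ is a nonnegative integer combination of the $k$ columns $\{\ba_i:i\in S_{n-k}\}$ using at most $k-m$ columns from outside $[m]$; equivalently $\bb^{(n-k)}=\bA\bz$ for some $\bz\in\Z^n_{\ge0}$ with $\vert\supp(\bz)\setminus[m]\vert\le k-m$. By the triangle inequality and the reindexing $\ell=n-j$,
\[
\min\bigl\{\Vert\bA\bz-\bb\Vert_{P(\bB)}:\bz\in\Z^n_{\ge0},\ \vert\supp(\bz)\setminus[m]\vert\le k-m\bigr\}\le\sum_{j=0}^{n-k-1}\Vert\bb^{(j+1)}-\bb^{(j)}\Vert_{P(\bB)}\le C\sum_{\ell=k+1}^{n}2^{-\frac{\ell-m}{m}} .
\]
As $\bb\in\bA\Z^n_{\ge0}$ was arbitrary, taking the maximum of the left side over such $\bb$ gives $\app_{\bB,k}(\bA)$, and evaluating the geometric series $\sum_{\ell=k+1}^{n}2^{-(\ell-m)/m}$ (common ratio $2^{-1/m}$, exactly $n-k$ terms) yields $\frac{1}{2^{1/m}-1}\bigl(2^{-(k-m)/m}-2^{-(n-m)/m}\bigr)$, which is precisely the right-hand side claimed. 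The extreme cases are covered automatically: for $k=n$ the chain is empty and the bound is $0$, and for $k=m$ the chain terminates at a combination of $\bB$ alone.

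The only genuinely load-bearing point — and the thing to be careful about — is that Theorem~\ref{thm_semigroup_approx_n-1} returns a representation using \emph{strictly fewer} than all the columns available outside $[m]$ (namely $(n-j)-1-m$ rather than $(n-j)-m$), which is exactly what lets me discard one column per step and land on precisely $k$ columns after $n-k$ steps. The remaining verifications — that full row rank and the simplicial-cone condition $\ba_i\in\pos\{\ba_1,\dots,\ba_m\}$ persist under passing to a column subset containing $\bB$, that $\mu$ is monotone under such restriction, and the closed-form evaluation of the geometric sum — are routine and I would not expect any obstacle there.
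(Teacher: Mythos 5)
Your proof is correct and takes essentially the same approach as the paper: iterate Theorem~\ref{thm_semigroup_approx_n-1} on a shrinking column set containing $\bB$, accumulate the errors by the triangle inequality, and evaluate the resulting geometric series. The only cosmetic difference is bookkeeping — the paper lets the column set drop by possibly more than one index per step and then bounds the sum by the worst case $l \le n-k$ with distinct $\vert I_j\vert\ge k-m+1$, whereas you force exactly one index to be removed per step, which is a slightly cleaner way to arrive at the identical sum.
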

	By evaluating the inequality in Theorem~\ref{thm_semigroup_general_bound} at $m = 1$ and rescaling the norm, we recover the bound given in Theorem~\ref{thm_semigroup_knapsack} provided that $\ba \in \Z^n$. We prove Theorem~\ref{thm_semigroup_approx_n-1} and Theorem~\ref{thm_semigroup_general_bound} in Section~\ref{s_proof_thm_semigroup_approx_n-1}. 
	
	A few comments on the bound in Theorem~\ref{thm_semigroup_general_bound} are in order. The bounds for $m\geq 2$ depend on the factor $\mu$ whereas the bounds for $m = 1$ are independent of $\mu$. This is not a coincidence. 
	In fact, for $m\geq 2$, it is possible to show that a dependence on $\mu$ (or some related parameter  measuring the size of $\bA$) is necessary to obtain approximation factors that decrease in $k$. This follows from the result below that we prove in Section~\ref{s_semigroups_lower_bounds}.
	
	\begin{proposition}
		\label{prop_semigroup_lower_bound_general}
		Let $n\geq 3$. There exists $\bA = (\ba_1,\ldots,\ba_n)\in\Z^{2\times n}$ such that  $\ba_i\in\pos\lbrace \ba_1,\ba_2\rbrace$ for all $i\in\lbrack n\rbrack$ and
		\begin{align*}
			\frac{\left\lfloor \sqrt{n - 1}\right\rfloor}{n-1}\leq \app_{k}(\bA)
		\end{align*}
		for all $k\in \lbrace 2,\ldots,n-1\rbrace$.
	\end{proposition}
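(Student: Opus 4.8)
The plan is to reduce to a single value of $k$ and then construct one explicit matrix. Since enlarging the sparsity budget can only decrease the optimal approximation error, $\app_k(\bA)\ge \app_{n-1}(\bA)$ for every $k\le n-1$; hence it suffices to produce $\bA=(\ba_1,\dots,\ba_n)\in\Z^{2\times n}$ with $\ba_i\in\pos\{\ba_1,\ba_2\}$ for all $i$ and $\app_{n-1}(\bA)\ge \lfloor\sqrt{n-1}\rfloor/(n-1)$. It is convenient to normalise first: left-multiplying by $\bB^{-1}$, with $\bB=(\ba_1,\ba_2)$, maps $P(\bB)$ onto $[-1,1]^2$ and $\ba_1,\ba_2$ onto $\be_1,\be_2$, so $\app_{n-1}(\bA)$ equals the same quantity, now measured in $\Vert\cdot\Vert_\infty$, for the rational matrix $\bB^{-1}\bA$, whose remaining columns then lie in $\pos\{\be_1,\be_2\}=\R^2_{\ge 0}$; clearing denominators at the end restores integrality without affecting the value. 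Thus the task is: choose $n-2$ rational nonnegative vectors and a target $\bb$ that lies in the semigroup they generate together with $\be_1,\be_2$, such that $\Vert\bA\bx-\bb\Vert_\infty\ge \lfloor\sqrt{n-1}\rfloor/(n-1)$ for every $\bx\in\Z^n_{\ge 0}$ having at least one zero coordinate.

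The construction I would use places the extra columns on a common fine rational grid of mesh $1/(n-1)$, near the coordinate axes, with the numerators chosen to encode a rigid combinatorial pattern. The target $\bb$ has both coordinates of denominator exactly $n-1$ and admits a representation in the full semigroup through essentially one multiset of columns that involves all $n$ of them; rigidity is enforced by an elementary residue argument — each column contributes a residue, modulo the coarser scales present, that no other column can supply, and $\be_1,\be_2$ are needed to absorb the integer parts. The quantity $\lfloor\sqrt{n-1}\rfloor$ appears because the columns associated with an axis form a short block of about $\sqrt{n-1}$ values whose every element is used in the representation of $\bb$: deleting any single column changes the set of values representable along that axis only on the scale of (block length)$/(n-1)\approx 1/\sqrt{n-1}$, and this is what remains as the error. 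Feasibility of $\bb$ in the full semigroup, which is what makes $\bb\in\bA\Z^n_{\ge 0}$ and hence ensures $\app_{n-1}(\bA)$ is actually tested on $\bb$, is immediate from the construction. The cases $n=3$ and $n=4$ already exhibit the effect in miniature: in the normalised picture one may take the single extra column $(\tfrac32,\tfrac32)$ with $\bb=(\tfrac52,\tfrac52)$, respectively the two extra columns $(\tfrac32,0),(0,\tfrac32)$ with the same $\bb$, and every deletion of a column then leaves an $\ell_\infty$-error of $\tfrac12$.

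The main obstacle is establishing the lower bound uniformly over all single-column deletions, which separates into two regimes. Deleting one of the extra columns requires an explicit gap estimate for a numerical semigroup generated by a short interval of integers after one generator is removed; the delicate point is that such semigroups are very flexible, so $\bb$ and the block structure must be set up precisely enough that no substitution of the deleted generator by a combination of the remaining ones recovers a representation within distance $\lfloor\sqrt{n-1}\rfloor/(n-1)$. This is where the $\sqrt{n-1}$ scaling is genuinely forced, and it also explains why the bound of Theorem~\ref{thm_semigroup_general_bound} must depend on $\mu$. Deleting $\be_1$ or $\be_2$ needs only an easy separate argument: without it, the corresponding coordinate of every feasible point is confined to a fixed scaled copy of $\Z_{\ge 0}$ lying at distance at least $1/2$ from $\bb$. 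Combining the two regimes gives $\app_{n-1}(\bA)\ge \lfloor\sqrt{n-1}\rfloor/(n-1)$, and monotonicity of $\app_k$ in $k$ then yields the claim for all $k\in\{2,\dots,n-1\}$.
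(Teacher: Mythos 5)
There is a genuine gap. Your reductions are sound: monotonicity of $\app_k$ in $k$ lets you focus on $k=n-1$, and left-multiplying by $\bB^{-1}$ turns $\Vert\cdot\Vert_{P(\bB)}$ into $\Vert\cdot\Vert_\infty$ with $\ba_1,\ba_2\mapsto\be_1,\be_2$; clearing denominators afterwards is harmless. The $n=3$ and $n=4$ examples you give are correct (both yield error $1/2$, which exceeds $\lfloor\sqrt{n-1}\rfloor/(n-1)$). But they both give the trivial bound $1/2$ and therefore do not expose the $\sqrt{n-1}$ mechanism at all, and for general $n$ you never actually state the construction. You gesture at ``blocks of about $\sqrt{n-1}$ columns near the axes'' and a ``rigid combinatorial pattern'' in the numerators, and you explicitly flag the central lower bound --- that after deleting any single generator, no substitution can come within $\lfloor\sqrt{n-1}\rfloor/(n-1)$ of $\bb$ --- as the ``delicate point'' without proving it. That delicate point is the entire content of the proposition; omitting it is omitting the proof. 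Moreover, numerical semigroups generated by short integer intervals behave flexibly, so the claim that deleting one generator shifts the representable set ``on the scale of (block length)$/(n-1)$'' is far from obvious and would need a real argument.

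For comparison, the paper's proof uses a different (and concrete) mechanism: it sets $q=\lfloor\sqrt{n-1}\rfloor$, $\ba_1=(n-1)\be_1$, $\ba_2=(n-1)\be_2$, and puts every extra column $\ba_i$ ($i\ge3$) in the \emph{same} residue class $\br+\bB\Z^2$ with $\br=(1,q)$, choosing the integer parts $\bz_i$ with powers of $2$ so that the target $\bb=\ba_1+\dots+\ba_n$ has a unique full-support representation (Lemma~\ref{lemma_semigroup_lower_bound}). The $\sqrt{n-1}$ then comes from a clean lattice estimate: any $\bz\in(\ba_1,\ba_2,\br)\Z^3\setminus\bB\Z^2$ has $\Vert\bz\Vert_\infty\ge q$, because if $|z_1|\in\{1,\dots,q-1\}$ then $t\in\lbrack q-1\rbrack$, so $tq\in\lbrack q,\,q^2-q\rbrack\subseteq\lbrack q,\,n-1-q\rbrack$, forcing $|z_2|\ge q$. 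Two additional case analyses (coefficient sum exactly $n-2$, using the binary-digit lemma; coefficient sum $\ge n-1$, using a size/functional bound) cover the remaining approximations. If you want to salvage your sketch, you need to either adopt this single-residue-class construction and prove the $\ell_\infty\ge q$ lattice bound, or else nail down your interval/block construction explicitly and prove the semigroup gap estimate for all single deletions --- as written, neither is done.
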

	
	Let us continue with a discussion of the bound established in Theorem~\ref{thm_semigroup_general_bound}. Since we assume that $\ba_i\in\pos\lbrace \ba_1,\ldots,\ba_m\rbrace$ for all $i\in\lbrack n\rbrack$, we always have an upper bound $\app_k(\bA)\leq \app_{\bB,k}(\bA) \leq 1/2$ whenever $k\geq m$. This follows by only using the vectors $\ba_1,\ldots,\ba_m$. For small values of $k$, however, the bound in Theorem~\ref{thm_semigroup_general_bound} is worse than $1/2$. In particular, one can calculate that the bound in Theorem~\ref{thm_semigroup_general_bound} improves upon $1/2$ if we have $k\geq 2m+m\log_2(cm\mu^{(m-1)/m}\left|\det\bB\right|^{(m-1)/m})$ for some constant $c>0$. These calculations utilize the fact that ${2}/(2^{1/m} - 1) = \mathcal{O}(m)$ for $m\geq 1$. 
	On the other hand, the results in \cite{alievdeloesparelindio2017} and \cite{eisenbrandshmonincaratheodorybounds06} establish that there exists an exact representation whenever 
	\begin{align*}
		k & \geq 2m\log_2(2\sqrt{m}\Vert\bA\Vert_\infty) & \cite{alievdeloesparelindio2017},\\
		k & \geq 2m\log_2(4m\Vert\bA\Vert_\infty) & \cite{eisenbrandshmonincaratheodorybounds06}.
	\end{align*}
	These results are stated in terms of $\Vert \bA\Vert_\infty$. The bound in Theorem~\ref{thm_semigroup_general_bound} is based on the parameters $\mu$ and $\left| \det\bB \right|$. The proof of the inequality in \cite{alievdeloesparelindio2017} is based on Siegel's lemma. We do not see how to phrase this in terms of $\mu$ and $\left|\det\bB\right|$. However, the proof of the inequality in \cite{eisenbrandshmonincaratheodorybounds06} can be translated to give a bound on an exact representation of the form $k \geq 2m\log_2(4m\mu\left|\det\bB\right|^{1/m})$. Hence, the bound provided in Theorem~\ref{thm_semigroup_general_bound} is relevant whenever
	\begin{align*}
		2m+m\log_2(cm\mu^{\frac{m-1}{m}}\left|\det\bB\right|^{\frac{m-1}{m}})\leq k\leq 2m\log_2(4m\mu\left|\det\bB\right|^{\frac{1}{m}}).
	\end{align*}
	This is the case when $\mu$ is relatively large compared to $m$ and $\left|\det\bB\right|$. In this regime, the bounds in \cite{alievdeloesparelindio2017} and \cite{eisenbrandshmonincaratheodorybounds06} are equal up to a constant. We also obtain a new bound on the size of an exact representation.
	\begin{corollary}
		\label{cor_semigroup_approx_to_exact}
		Let $\bA = (\ba_1,\ldots,\ba_n)\in\Z^{m\times n}$ have full row rank. Let $\bB = (\ba_1,\ldots,\ba_m)$ be invertible, $\mu = \max_{i\in\lbrack n\rbrack}\Vert\ba_i\Vert_{P(\bB)}$, and $\ba_i\in \pos\lbrace \ba_1,\ldots,\ba_n\rbrace$ for all $i\in\lbrack n\rbrack$. Then we have $\app_{n-1}(\bA) = 0$ for 
		\begin{align*}
			n\geq m + m\log_2({2}\mu^{(m-1)/m}\left|\det\bB\right|^{(2m-1)/m}).
		\end{align*}
	\end{corollary}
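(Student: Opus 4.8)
The plan is to derive the exact statement $\app_{n-1}(\bA)=0$ from the quantitative estimate in Theorem~\ref{thm_semigroup_approx_n-1} by a quantization argument, in exactly the spirit in which the sparsity bound $\app_{\Z^n,k,\Vert\cdot\Vert_\infty}(\bA)=0$ was extracted from Theorem~\ref{thm_lattice_main}: a continuous upper bound that falls below the smallest positive value the quantity can possibly attain must in fact be $0$. So the two ingredients are the bound of Theorem~\ref{thm_semigroup_approx_n-1} (which applies since the hypothesis on $n$ forces $m\leq n-1$) and the identification of the ``grid'' on which $\app_{\bB,n-1}(\bA)$ lives.

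First I would pin down that grid. For every $\bb\in\bA\Z^n_{\geq0}\subseteq\Z^m$ and every $\bx\in\Z^n_{\geq0}$, the residual $\bv:=\bA\bx-\bb$ is an \emph{integer} vector. Since $\Vert\cdot\Vert_{P(\bB)}$ has unit ball $\bB\cdot[-1,1]^m$, we have $\Vert\bv\Vert_{P(\bB)}=\Vert\bB^{-1}\bv\Vert_\infty$, and by Cramer's rule every coordinate of $\bB^{-1}\bv$ lies in $\tfrac{1}{\det\bB}\Z$; hence $\Vert\bv\Vert_{P(\bB)}$ is a non-negative integer multiple of $1/\lvert\det\bB\rvert$. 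Together with the trivial upper bound $\app_{\bB,n-1}(\bA)\leq\tfrac12$ (representing any $\bb$ using $\ba_1,\dots,\ba_m$ alone, as noted before Corollary~\ref{cor_semigroup_approx_to_exact}), the inner minima defining $\app_{\bB,n-1}(\bA)$ range over a finite subset of $\tfrac{1}{\lvert\det\bB\rvert}\Z_{\geq0}$, so $\app_{\bB,n-1}(\bA)$ itself lies in $\tfrac{1}{\lvert\det\bB\rvert}\Z_{\geq0}$. Consequently $\app_{\bB,n-1}(\bA)<1/\lvert\det\bB\rvert$ already forces $\app_{\bB,n-1}(\bA)=0$, and hence $\app_{n-1}(\bA)=0$ via the first inequality of Theorem~\ref{thm_semigroup_approx_n-1}.

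It then remains to verify that the hypothesis on $n$ drives the right-hand side of Theorem~\ref{thm_semigroup_approx_n-1}, namely $2^{-(n-m)/m}\,\mu^{(m-1)/m}\lvert\det\bB\rvert^{(m-1)/m}$, below $1/\lvert\det\bB\rvert$. Clearing denominators and comparing exponents, this is equivalent to $2^{(n-m)/m}>\mu^{(m-1)/m}\lvert\det\bB\rvert^{(2m-1)/m}$, i.e.\ to $n>m+m\log_2\!\bigl(\mu^{(m-1)/m}\lvert\det\bB\rvert^{(2m-1)/m}\bigr)$, which is precisely the condition of the corollary (at the boundary value of $n$, where the estimate only gives $\leq 1/\lvert\det\bB\rvert$, one invokes the integer-data refinement of Theorem~\ref{thm_semigroup_approx_n-1} that rounds the bound down, as in Example~\ref{ex_lattice_exponential_behavior}, to still land at $0$). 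Combining the three displays yields $\app_{n-1}(\bA)\leq\app_{\bB,n-1}(\bA)=0$.

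The only substantive point is the quantization observation: even though the error is now measured in the skew norm $\Vert\cdot\Vert_{P(\bB)}$ rather than in $\Vert\cdot\Vert_\infty$, residuals remain integer vectors and their $P(\bB)$-norms are quantized in steps of $1/\lvert\det\bB\rvert$; this is the mechanism that converts the a priori continuous estimate of Theorem~\ref{thm_semigroup_approx_n-1} into an exact zero. Everything else is elementary manipulation of exponents, so I do not expect any genuine obstacle beyond getting the quantization step right.
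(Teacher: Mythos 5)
Your proof is correct and follows essentially the same approach as the paper: both arguments hinge on the observation that any non-zero integer residual has $\Vert\cdot\Vert_{P(\bB)}$-norm at least $1/\lvert\det\bB\rvert$ (the paper phrases this geometrically as $1/\lvert\det\bB\rvert\cdot\intt(P(\bB))$ containing no non-zero integer points, while you phrase it algebraically via Cramer's rule), so that the estimate of Theorem~\ref{thm_semigroup_approx_n-1} falling strictly below $1/\lvert\det\bB\rvert$ forces the error to vanish. The only caveat is your parenthetical about the boundary value of $n$: Theorem~\ref{thm_semigroup_approx_n-1} has no ``integer-data refinement that rounds the bound down'' (that device appears only in the lattice setting of Lemma~\ref{lemma_lattice_covering} and Example~\ref{ex_lattice_exponential_behavior}), so that remark does not patch the edge case --- but the paper's own proof leaves the same boundary untouched, so this is not a substantive gap relative to the paper.
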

	\begin{proof}
		The polytope $1/\left|\det\bB\right|\cdot P(\bB)$ does not contain non-zero integer vectors in its interior since each non-zero integer vector can be expressed in terms of $\bB$ and non-zero coefficients which are at least $1/\left|\det\bB\right|$. So we have
		\begin{align*}
			1/\left|\det\bB\right|\cdot \intt(P(\bB))\cap\Z^n = \lbrace\bm{0}\rbrace. 
		\end{align*}
		If ${2\cdot}\mu^{(m-1)/m}\cdot\left|\det\bB\right|^{(2m-1)/m} < 2^{(n-m)/m}$, Theorem~\ref{thm_semigroup_approx_n-1} implies $\app_{\bB,n-1}(\bA) < 1/\left|\det\bB\right|$. 
		This means that we can find an approximation of some $\bb\in\bA\Z^n_{\geq 0}$ that is contained in $\bb + 1/\left|\det\bB\right|\cdot\intt( P(\bB))$.
		However, we have 
		\begin{align*}
			\left(\bb + 1/\left|\det\bB\right|\cdot\intt( P(\bB)\right) \cap \Z^n = \lbrace \bb\rbrace.	
		\end{align*}
		Hence, $\bb$ can only be approximated by $\bb$ itself, which means that $\bb$ can be expressed as non-negative integer combination of at most $n-1$ columns and thus gives an approximation error of $0$.
		Rearranging terms to isolate $n$ in ${2\cdot}\mu^{(m-1)/m}\cdot\left|\det\bB\right|^{(2m-1)/m} < 2^{(n-m)/m}$ gives the claimed bound.
	\end{proof}
	When $m=1$, we recover a tight bound on the sparse representation of integers in semigroups; cf. \cite[Theorem 3]{alievAverkovLoeraOertel21}.  
	
	Our results are based on the assumption that $\ba_i\in\pos\lbrace \ba_1,\ldots,\ba_m\rbrace$ for all $i\in\lbrack n\rbrack$ or, equivalently, $\pos\lbrace \ba_1,\ldots,\ba_n\rbrace$ is simplicial. This assumption allows us to fix a suitable basis, denoted above by $\bB$, which we use consistently throughout our arguments. In particular, we bound the refined parameter $\app_{\bB,k}(\bA)$ and measure the approximation error with a norm that depends on $\bB$. 
	In general, if one removes the assumption that $\ba_i\in\pos\lbrace \ba_1,\ldots,\ba_m\rbrace$ for all $i\in\lbrack n\rbrack$, it is not clear how to obtain results in the flavor of Theorems~\ref{thm_semigroup_approx_n-1} and \ref{thm_semigroup_general_bound}. If such an extension is possible, it likely requires new approaches and further assumptions. The following example supports this statement by illustrating that for a given basis $\bB$ certain vectors can be hard to approximate.
	
	\begin{example}
		Let $l\geq 3$ and define
		\begin{align*}
			\bA := (\ba_1,\ba_2,\ba_3,\ba_4) =
			\begin{pmatrix}
				1 & 0 & 2^{l-1} & 2^{l-1} + 2^{l-2}\\ 0 & 1 & -2^{l-1} & -2^l
			\end{pmatrix}.
		\end{align*}
		Choose $\bB := (\ba_1,\ba_2)$, note that $\ba_3,\ba_4\notin\pos\lbrace \ba_1,\ba_2\rbrace$. Select
		\begin{align*}
			\bb := \ba_3 + \ba_4 = \begin{pmatrix}
				2^l + 2^{l-2} \\ -2^l - 2^{l-1}
			\end{pmatrix}.
		\end{align*}
		Then one can calculate
		\begin{align*}
			2^{\frac{l - 4}{2}}\cdot\mu^{\frac{1}{2}}\cdot\left|\det\bB\right|^{\frac{1}{2}} = 2^{l-2} \leq \app_{\bB,3}(\bA),
		\end{align*}
		which shows that, in contrast to Theorems~\ref{thm_semigroup_approx_n-1} and \ref{thm_semigroup_general_bound}, no upper bounds on $\app_{\bB,3}(\bA)$ in terms of $\mu^{1/2}$ and $\left|\det\bB\right|^{1/2}$ exist.
	\end{example}
	To put the lower bound of $2^{l-2}$ in the example into context, note that for any $\bb \in\bA\Z^n_{\geq 0}$, one can select a basis $\bB$ such that $\bb$ is contained in the cone spanned by the columns of $\bB$. This guarantees $\app_{\bB,k}(\bA)\leq 1/2$ for all $k\geq m$. However, the choice of the basis $\bB$ differs with varying $\bb$.
	
	\section{Proofs of Theorem~\ref{thm_semigroup_approx_n-1} and Theorem~\ref{thm_semigroup_general_bound}}
	\label{s_proof_thm_semigroup_approx_n-1}
	The majority of our effort is dedicated to proving Theorem~\ref{thm_semigroup_approx_n-1}, from which Theorem~\ref{thm_semigroup_general_bound} will follow as a consequence.
	
	Let us start to describe the major steps required to prove Theorem~\ref{thm_semigroup_approx_n-1}. Let $\bA$ be given as in Theorem~\ref{thm_semigroup_approx_n-1}. First, we show that it suffices to approximate only a finite set of integer vectors with small size. For that purpose, fix $\bb\in\bA\Z^n_{\geq 0}$ and $k\geq m$.  Then we define the approximation error of $\bb$ with respect to $\bB$ and $k$ to be
	\begin{align*}
		\app_{\bB,k}(\bA,\bb) := \min \left\lbrace \Vert \bA\bx - \bb\Vert_{P(\bB)} : \bx\in \Z^n_{\geq 0}, \ \left|\supp(\bx)\backslash \lbrack m \rbrack \right|\leq k - m\right\rbrace
	\end{align*}
	and the following finite set 
	\begin{align*}
		S:=\left\lbrace \bb = \lambda_{m+1}\ba_{m+1} + \cdots + \lambda_n\ba_n : \lambda_{m+1},\ldots,\lambda_n\in \N, \sum_{i= m + 1}^n\lambda_{i}\leq \left|\det\bB\right|\right\rbrace.
	\end{align*}
	The first result states that it suffices to approximate the finitely many integer vectors that lie in the set $S$.
	\begin{lemma}
		\label{lemma_general_semigroup_small_vectors}
		Let $\bA = (\ba_1,\ldots,\ba_n)\in\Z^{m\times n}$ have full row rank, $m\leq k\leq n - 1$, $\bB = (\ba_1,\ldots,\ba_m)$ be invertible, $\mu = \max_{i\in\lbrack n\rbrack}\Vert\ba_i\Vert_{P(\bB)}$, and $\ba_i\in\pos\lbrace\ba_1,\ldots,\ba_m\rbrace$ for all $i\in\lbrack n\rbrack$. Then we have 
		\begin{align*}
			\app_{\bB,k}(\bA) = \max_{\bb\in S}\app_{\bB,k}(\bA,\bb)
		\end{align*}
	\end{lemma}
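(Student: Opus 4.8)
The plan is to reduce the general approximation problem over the semigroup $\bA\Z^n_{\geq 0}$ to the finite set $S$ by a two-step argument: first reduce the coefficients on the non-basis columns modulo an integer related to $\left|\det\bB\right|$, and then absorb the residual basis part using the freedom in the coefficients of $\ba_1,\ldots,\ba_m$ (which are not counted in the support constraint). Concretely, let $\bb = \sum_{i=m+1}^n \lambda_i \ba_i$ be an arbitrary element of $\bA\Z^n_{\geq 0}$ that is optimally hard to approximate, where we may assume $\bb$ uses only the non-basis columns since any contribution from $\ba_1,\ldots,\ba_m$ can be shifted freely on both the target side and the approximation side without changing $\app_{\bB,k}(\bA,\bb)$ (the support restriction ignores $[m]$).

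The key step is to write each $\ba_i = \bB \bm{c}_i$ with $\bm{c}_i \in \pos\{\bm{e}_1,\ldots,\bm{e}_m\} \cap \frac{1}{\left|\det\bB\right|}\Z^m$ (using $\ba_i \in \pos\{\ba_1,\ldots,\ba_m\}$ and Cramer's rule), so that $\left|\det\bB\right|\cdot \bm{c}_i \in \Z^m_{\geq 0}$. Then for the coefficient vector $\bm{\lambda} = (\lambda_{m+1},\ldots,\lambda_n)$, I would reduce it to a vector $\bm{\lambda}'$ with the same support (or smaller) and with $\sum_i \lambda_i' \le \left|\det\bB\right|$ by repeatedly subtracting, from a maximal $\lambda_i$, a quantity that keeps the partial sum $\sum_i \lambda_i \ba_i$ congruent to the original modulo $\bB\Z^m$ — equivalently, peel off integer multiples of $\left|\det\bB\right|$ worth of column $\ba_i$ at a time, since $\left|\det\bB\right| \ba_i \in \bB\Z^m$. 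Each such peeling changes $\bb$ by an element of $\bB\Z^m$, i.e.\ by $\bB\bm{z}$ for some $\bm{z}\in\Z^m$. The resulting target $\bb' := \sum_i \lambda_i' \ba_i$ lies in $S$, and $\bb = \bb' + \bB\bm{z}$ for an integer vector $\bm{z}$; moreover the support of $\bm{\lambda}'$ is contained in that of $\bm{\lambda}$, so any $k$-sparse (off $[m]$) approximation of $\bb'$ transforms into one of $\bb$ by adding $\bB\bm{z}$, which only touches coordinates in $[m]$. This shows $\app_{\bB,k}(\bA,\bb) \le \app_{\bB,k}(\bA,\bb') \le \max_{\bb\in S}\app_{\bB,k}(\bA,\bb)$, and the reverse inequality $\app_{\bB,k}(\bA) \ge \max_{\bb\in S}\app_{\bB,k}(\bA,\bb)$ is immediate since $S\subseteq \bA\Z^n_{\geq 0}$.

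The main obstacle I anticipate is making precise that the reduction of $\bm{\lambda}$ to a vector with small $\ell_1$-norm can be carried out (a) without enlarging the support and (b) in such a way that the difference $\bb - \bb'$ genuinely lies in the integer lattice $\bB\Z^m$, not merely in $\pos(\bB)$ or in $\bB\Q^m$. Point (b) is exactly where the bound $\sum_i \lambda_i' \le \left|\det\bB\right|$ comes from: one should argue that if $\sum_i \lambda_i > \left|\det\bB\right|$ then some coefficient exceeds what is needed, and subtracting a suitable integer amount of that column — chosen so the removed piece is an integer combination of $\ba_1,\ldots,\ba_m$ — strictly decreases $\sum_i \lambda_i$ while staying in $\bA\Z^n_{\geq 0}$; iterating terminates. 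A clean way to phrase this is: among all representations $\bb - \bB\bm{z} = \sum_i \mu_i \ba_i$ with $\bm{z}\in\Z^m$, $\mu_i\in\N$, and $\supp(\mu)\subseteq\supp(\lambda)$, pick one minimizing $\sum_i \mu_i$; if that minimum exceeded $\left|\det\bB\right|$, a pigeonhole/averaging argument on the residues $\bB\bm{c}_i \bmod \bB\Z^m$ (there are at most $\left|\det\bB\right|$ of them, counted with the simplex structure) would produce a strictly smaller representation, a contradiction. Once the finite-set reduction is established the rest of the section (bounding $\app_{\bB,k}(\bA,\bb)$ for $\bb\in S$ via a covering-type argument) proceeds on a bounded, fully explicit family of targets.
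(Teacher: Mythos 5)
Your overall strategy — drop the basis columns from the representation, reduce the coefficient vector modulo $\bB\Z^m$ until $\sum_i\lambda_i\leq|\det\bB|$, and absorb the removed piece into the basis coefficients (which are not charged against the support budget) — is exactly the paper's. But the reduction step as you phrase it has a gap. Peeling off integer multiples of $|\det\bB|$ from a single coefficient $\lambda_i$ is not ``equivalent'' to the desired bound: it only forces each $\lambda_i' < |\det\bB|$ individually, hence $\sum_i\lambda_i' < (n-m)|\det\bB|$, which does not land in $S$. And the fallback you sketch, a pigeonhole on the residues $\bB\bm{c}_i \bmod \bB\Z^m$ of the \emph{individual columns}, also fails as stated: finding $\ba_i\equiv\ba_j\pmod{\bB\Z^m}$ gives $\ba_i-\ba_j\in\bB\Z^m$, but $\ba_i-\ba_j$ generally does not lie in $\pos\{\ba_1,\ldots,\ba_m\}$, so it need not be a \emph{non-negative} integer combination of the basis, and subtracting it could destroy feasibility.

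The correct pigeonhole is on \emph{prefix sums}, which is what makes the simplicial-cone hypothesis do its work. Expand $\bb=\sum_{i>m}\lambda_i\ba_i$ as a list of $N:=\sum_{i>m}\lambda_i$ summands and let $\by_1,\ldots,\by_N$ be the partial sums. If $N>|\det\bB|$, two of them $\by_p,\by_q$ with $p<q$ lie in the same coset modulo $\bB\Z^m$, so $\by_q-\by_p\in\bB\Z^m$; and since $\by_q-\by_p$ is a non-negative combination of columns $\ba_i\in\pos\{\ba_1,\ldots,\ba_m\}$, it also lies in $\pos\{\ba_1,\ldots,\ba_m\}=\bB\R^m_{\geq 0}$. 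Because $\bB$ is invertible, $\bB\Z^m\cap\bB\R^m_{\geq 0}=\bB\Z^m_{\geq 0}$, so $\by_q-\by_p=\sum_{i=1}^m\mu_i\ba_i$ with $\mu_i\in\N$. Subtracting this from $\bb$ strictly decreases the coordinate sum, preserves non-negativity of the non-basis coefficients and does not enlarge their support, and the removed basis part can be re-added on the approximation side at no cost. Iterating gives $\sum_i\lambda_i'\leq|\det\bB|$, i.e.\ an element of $S$ with the same value of $\app_{\bB,k}(\bA,\cdot)$. With this prefix-sum pigeonhole in place of your per-column peeling, your argument coincides with the paper's proof.
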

	\begin{proof}
		We will show that for all $\bb\notin S$ one can construct a vector $\bb'\in S$ with $\app_{\bB,k}(\bA,\bb) = \app_{\bB,k}(\bA,\bb')$, which implies the claim. 
		
		Let $\bb\notin S$ with $\bb = \lambda_1\ba_1 + \cdots + \lambda_n\ba_n$. Since we fix $\ba_1,\ldots,\ba_m$ in our non-negative integral combination, we assume without loss of generality that $\lambda_i = 0$ for all $i\in\lbrack m\rbrack$. We analyze the sum
		\begin{align*}
			\bb = \underbrace{\ba_{m+1} + \cdots + \ba_{m + 1}}_{\lambda_{m+1}\text{ times }} + \cdots + \underbrace{\ba_n + \cdots + \ba_n}_{\lambda_n\text{ times}}.
		\end{align*}
		Let $\by_j$ denote the subsum of the first $j$ terms on the right from above for $j\in \lbrack \sum_{i=m+1}^n \lambda_i\rbrack$. There are at least $\left|\det\bB\right| + 1$ such subsums since $\bb\notin S$. Moreover, there exist  $\left|\det\bB\right|$ different affine lattices of the form $\by + \bB\Z^m$ for $\by\in \Z^m$. 
		By the pigeonhole principle, we can find two different subsums $\by_i$ and $\by_j$ with $i < j$ such that $\by_j - \by_i\in \bB\Z^m$. Furthermore, we have $\by_j - \by_i\in\pos\lbrace \ba_1,\ldots,\ba_m\rbrace$ as $i<j$ and $\ba_i\in\pos\lbrace \ba_1,\ldots,\ba_m\rbrace$ for all $i\in\lbrack n\rbrack$. So we have $\by_j - \by_i = \sum_{i=1}^m\mu_i\ba_i$ for non-negative integral values $\mu_1,\ldots,\mu_m$. Instead of approximating $\bb$, we approximate the new integer vector $\bb- (\by_j - \by_i)\in \pos\lbrace \ba_1,\ldots,\ba_m\rbrace$. Once we have an approximation of this new vector, we add the vector $\sum_{i=1}^m\mu_i\ba_i$ back and obtain an approximation of $\bb$ with the same error. Since $\bb- (\by_j - \by_i)$ has a strictly smaller coordinate sum than $\bb$, one can repeat these steps until the coordinate sum is bounded by $\left|\det\bB\right|$.
	\end{proof}
	The second part of the proof of Theorem~\ref{thm_semigroup_approx_n-1} follows a similar approach to the techniques in \cite{eisenbrandshmonincaratheodorybounds06}: 
	We fix some $\bb\in S$. This vector has a representation of the form $\bb = \lambda_{m+1}\ba_{m+1} + \cdots + \lambda_n\ba_n$ with $\lambda_{m+1},\ldots,\lambda_n\in\N$.  
	We tile $\pos\lbrace \ba_1,\ldots,\ba_m\rbrace$ with sufficiently small copies of the parallelepiped spanned by $\ba_1,\ldots,\ba_m$. Then we count how many subsums of $\bb = \lambda_{m+1}\ba_{m+1} + \cdots + \lambda_n\ba_n$ are contained in the copies of the sufficiently small parallelepipeds. 
	It turns out that the number of subsums grows exponentially in $n$ and the number of parallelepipeds that we need to consider only grows exponentially in $m$. Once this has been established one could derive a first bound that is, however, weaker than the one stated in Theorem~\ref{thm_semigroup_approx_n-1}. (The key distinction is that one would replace the expression $\mu^{(m-1)/m}\cdot\left|\det\bB\right|^{(m-1)/m}$ by the expression $\mu\cdot \left|\det\bB\right|$.) 
	To get Theorem~\ref{thm_semigroup_approx_n-1}, a second ingredient must be incorporated, one that, to the best of our knowledge, has not previously been utilized: the idea is to show that we only need to consider so-called incomparable subsums. 
	
	To make this notion precise, we work with partially ordered sets. We refer the reader to \cite[Chapter 3]{StanleyEC1} for definitions. Let  $(P_{m,s},\preceq)$ be the partially order set given by $P_{m,s} := \Z^m \cap \lbrack 0, s\rbrack^m$ and $\by \preceq \bz$ if and only if $y_1\leq z_1,\ldots,y_m\leq z_m$ for $m,s\geq 1$. An antichain of $(P_{m,s},\preceq)$ is a subset of $P_{m,s}$ whose elements are pairwise incomparable. An upper bound on the size of a largest antichain in $(P_{m,s},\preceq)$ can be established easily. 
	
	\begin{lemma}
		\label{lemma_semigroup_partial_order}
		Let $m,s\in\N_{\geq 1}$. The largest antichain of $(P_{m,s},\preceq)$ is bounded above by $(s+1)^{m-1}$. It is in the order  of $\Theta(s^{m-1})$.
	\end{lemma}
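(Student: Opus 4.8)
The plan is to obtain the upper bound from an explicit chain decomposition of $P_{m,s}$ and the matching order of growth from a single layer of constant coordinate sum.

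\textbf{Upper bound.} For each $\bm u = (u_1,\dots,u_{m-1})\in\Z^{m-1}\cap[0,s]^{m-1}$ I would set
\[
C_{\bm u} := \bigl\{(u_1,\dots,u_{m-1},t) : t\in\{0,\dots,s\}\bigr\}.
\]
Each $C_{\bm u}$ is a chain of $(P_{m,s},\preceq)$, since any two of its elements differ only in the last coordinate, and the $(s+1)^{m-1}$ sets $C_{\bm u}$ partition $P_{m,s}$. An antichain and a chain share at most one element, so every antichain of $(P_{m,s},\preceq)$ has at most $(s+1)^{m-1}$ elements. (This is just the easy direction of Dilworth's theorem specialised to this particular chain cover; no converse is needed.)

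\textbf{Lower bound and the $\Theta$-statement.} To see that a largest antichain has size $\Omega(s^{m-1})$ — with $m$ regarded as fixed — I would partition $P_{m,s}$ into the $ms+1$ layers $L_c := \{\by\in P_{m,s} : y_1+\dots+y_m = c\}$ for $c=0,\dots,ms$. Each $L_c$ is an antichain, because $\by\preceq\bz$ together with $\sum_i y_i=\sum_i z_i$ forces $\by=\bz$. By the pigeonhole principle some layer $L_c$ has at least $(s+1)^m/(ms+1)$ elements, and since $ms+1\le(m+1)s$ for $s\ge 1$ this is at least $s^{m-1}/(m+1)$, i.e.\ $\Omega(s^{m-1})$. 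Combined with the upper bound $(s+1)^{m-1}=O(s^{m-1})$, the largest antichain of $(P_{m,s},\preceq)$ is of size $\Theta(s^{m-1})$.

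There is essentially no obstacle here: both halves are short pigeonhole arguments, and the only point that deserves a word of care is that the asymptotic statement treats $m$ as a fixed constant, so the constants hidden in $\Theta(\cdot)$ may depend on $m$. If one wished to pin down the exact value of the largest antichain instead of its order of magnitude, one would identify it with the central layer $L_{\lfloor ms/2\rfloor}$ via a symmetric-chain (normalised-matching) decomposition of the product of $m$ chains, but this refinement is not required for the statement as phrased.
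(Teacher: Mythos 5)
Your proof is correct. The upper bound is exactly the paper's argument: the same explicit chain cover $C_{\bm u}$ indexed by the first $m-1$ coordinates, with the observation that an antichain meets each chain at most once (the paper phrases this via Dilworth's theorem, you note only the trivial direction is needed, which is right). For the lower bound the paper and you both use the key fact that constant-coordinate-sum layers are antichains, but you take a slightly different route to bound the size: the paper fixes the single layer $\{\bz\in P_{m,s}: \sum_i z_i = s\}$ and appeals to Ehrhart theory to conclude it has $\Omega(s^{m-1})$ elements, whereas you average over all $ms+1$ layers and invoke pigeonhole, getting a layer of size at least $(s+1)^m/(ms+1)\ge s^{m-1}/(m+1)$. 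Your variant is a bit more elementary, as it sidesteps both Ehrhart theory and the explicit stars-and-bars count $\binom{s+m-1}{m-1}$ of the paper's chosen layer, at the (negligible) cost of not identifying which layer is large. Both computations are valid and give the same order of magnitude, with constants depending on $m$ as you correctly flag.
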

	\begin{proof}
		We  determine the upper bound first. By Dilworth's theorem \cite{dilworth1950decomposition}, the largest antichain equals the minimal size of a chain cover, a union of chains that cover the partially ordered set. Thus, we can construct a chain cover to get an upper bound on the size of the largest antichain. Define the chains $C_{z_1,\ldots,z_{m-1}} := \lbrace (z_1,\ldots,z_{m-1},0)^\top,\ldots, (z_1,\ldots,z_{m-1},s)^\top\rbrace$ for fixed values $z_1,\ldots,z_{m-1}\in\lbrace 0,\ldots,s\rbrace$. Those chains cover $P_{m,s}$. In total, there are $(s+1)^{m-1}$ chains of the type $C_{z_1,\ldots,z_{m-1}}$. This gives the desired upper bound. 
		
		For the asymptotic bound, note that the upper bound  implies that the size of a largest antichain is in $\mathcal{O}(s^{m-1})$. Hence, it suffices to construct an antichain of size $\Omega(s^{m-1})$. Define the subset of $P_{m,s}$ given by $S := \lbrace \bz\in P_{m,s} : \sum_{i=1}^m z_i = s\rbrace$. The elements in $S$ are incomparable by construction. From Ehrhart theory, cf. \cite[Chapter 3]{beck2007computing}, it follows that 
		$\left|S\right| = \Omega(s^{m-1})$ since the elements in $S$ lie on an $(n-1)$-dimensional affine hyperplane.
	\end{proof}
	Let $\parr\lbrace \ba_1,\ldots,\ba_m\rbrace$ denote the half-open parallelepiped spanned by the basis vectors $\ba_1,\ldots,\ba_m$. Taking  the parameter $\mu$ into consideration one has that  $$\ba_i\in\mu\cdot \parr\lbrace \ba_1,\ldots,\ba_m\rbrace \text{ for all } i\in\lbrack n\rbrack.$$
	
	\begin{proof}[Proof of Theorem~\ref{thm_semigroup_approx_n-1}]
		Suppose that $\bb \in S$ by Lemma~\ref{lemma_general_semigroup_small_vectors}. The definition of $S$ guarantees the existence of $\lambda_{m+1},\ldots,\lambda_n\in \N$ such that $\bb = \lambda_{m+1}\ba_{m+1} + \cdots + \lambda_n\ba_n$ and $\sum_{i= m + 1}^n\lambda_{i}\leq \left|\det\bB\right|$. 
		Using the latter property and the triangle inequality we obtain $\bb \in \mu \cdot \left|\det\bB\right|\cdot\parr\lbrace \ba_1,\ldots,\ba_m\rbrace$ as $\ba_i \in \mu\cdot\parr\lbrace \ba_1,\ldots,\ba_m\rbrace$ for all $i\in\lbrack n\rbrack$.
		We tile $\mu \cdot \left|\det\bB\right|\cdot\parr\lbrace \ba_1,\ldots,\ba_m\rbrace$ using $\mu^m \cdot\left|\det\bB\right|^m$ copies of the parallelepiped $\parr\lbrace \ba_1,\ldots,\ba_m\rbrace$. 
		We refine this tiling by further dividing each $\parr\lbrace \ba_1,\ldots,\ba_m\rbrace$ into $l^m$ parellelepipeds of size and shape $1/l\cdot\parr\lbrace \ba_1,\ldots,\ba_m\rbrace$. In total, we have $\mu^m\cdot \left|\det\bB\right|^m \cdot l^m$ parallelepipeds tiling $\mu \cdot \left|\det\bB\right|\cdot\parr\lbrace \ba_1,\ldots,\ba_m\rbrace$. 
		Let $\tilde{\ba}_i := \lambda_i\ba_i$. Suppose that two subsums of $\tilde{\ba}_{m + 1} + \cdots + \tilde{\ba}_n$ are in the same tile, up to adding multiples of $\ba_1,\ldots,\ba_m$ to one of the subsums. In other words, suppose that we have two different sets $I,J\subseteq \lbrace m + 1,\ldots, n\rbrace$ such that $\mu_1\ba_1 + \cdots + \mu_m\ba_m + \sum_{i\in I}\tilde{\ba}_i$ and $\sum_{j\in J}\tilde{\ba}_j$ are in the same tile for $\mu_1,\ldots,\mu_m\in \N$. This implies $\Vert \mu_1\ba_1 + \cdots + \mu_m\ba_m + \sum_{i\in I}\tilde{\ba}_i - \sum_{j\in J}\tilde{\ba}_j\Vert_{P(\bB)}\leq 1/l$. Since $I,J$ are different sets, we assume without loss of generality that there exists $j^*\in J$ with $j^*\notin I$. If this is not the case, we have $J\subsetneq I$, which implies $\Vert\sum_{j\in J}\tilde{\ba}_j\Vert_{P(\bB)} < \Vert\sum_{i\in I}\tilde{\ba}_i$$\Vert_{P(\bB)}$. So we have $\mu_i = 0$ for all $i\in\lbrack m\rbrack$ and we can interchange the role of $I$ and $J$. 
		Using this, we construct a new approximate vector 
		\begin{align*}
			\bar{\bb} := \bb + \mu_1\ba_1 + \cdots + \mu_m\ba_m + \sum_{i\in I}\tilde{\ba}_i - \sum_{j\in J}\tilde{\ba}_j.
		\end{align*}
		This is an element of the semigroup that is generated by $n - 1$ elements since we do not use the generator $\tilde{\ba}_{j^*}$. We get  
		\begin{align}
			\label{semigroup_inequality_n-1}
			\Vert \bar{\bb} - \bb\Vert_{P(\bB)} = \Vert \mu_1\ba_1 + \cdots + \mu_m\ba_m + \sum_{i\in I}\tilde{\ba}_i - \sum_{j\in J}\tilde{\ba}_j\Vert_{P(\bB)}\leq \frac{1}{l}.
		\end{align}
		It remains to find a bound on $l$. To accomplish this task, we investigate how fine we want to tile $\parr\lbrace \ba_1,\ldots,\ba_m\rbrace$ while simultaneously ensuring that two subsums of $\tilde{\ba}_{m+1} + \cdots + \tilde{\ba}_n$ are in the same tile, up to adding multiples of $\ba_1,\ldots,\ba_m$ to one of the subsums. 
		We have $l^m$ tiles in $\parr\lbrace \ba_1,\ldots,\ba_m\rbrace$. 
		Fix one of those tiles and call it $T$.  
		We define $T(\mu_1,\ldots,\mu_m):= T + \sum_{i=1}^m\mu_i\ba_i$ for $\mu_1,\ldots,\mu_m\in \lbrace 0,\ldots,\mu\cdot\left|\det\bB\right| - 1\rbrace$. These are the translates of $T$ modulo $\ba_1,\ldots,\ba_m$ that are contained in $\mu\cdot\left|\det\bB\right|\cdot\parr\lbrace \ba_1,\ldots,\ba_m\rbrace$. 
		There are $\mu^m\cdot\left|\det\bB\right|^m$ of those translates. We claim that if there are more than $\mu^{m-1}\cdot\left|\det\bB\right|^{m-1}$ subsums in different $T(\mu_1,\ldots,\mu_m)$, we can add multiples of $\ba_1,\ldots, \ba_m$ to one of the subsums and end up in the same translated tile. To see this, we define a partial order on the set $\mathcal{T}$ of the translates of $T(\mu_1,\ldots,\mu_m)$ for $\mu_1,\ldots,\mu_m\in\lbrace 0,\ldots,\mu\cdot\lambda - 1\rbrace$. Let $T(\mu_1,\ldots,\mu_m)\preceq T(\mu'_1,\ldots,\mu'_m)$ if and only if $\mu_1\leq \mu'_1,\ldots,\mu_m\leq\mu'_m$. This partial order implies that if two subsums are contained in translated tiles that obey the ordering above, we are done since $T(\mu_1,\ldots,\mu_m) + \sum_{i=1}^m(\mu'_i - \mu_i) = T(\mu'_1,\ldots,\mu'_m)$ and $\mu'_i - \mu_i\geq 0$ for all $i\in\lbrack m\rbrack$. Hence, we want to find the largest subset of $\mathcal{T}$ whose elements are pairwise incomparable with respect to the partial order. This is equivalent to determining the largest antichain in $(\mathcal{T},\preceq)$. Observe that $(\mathcal{T},\preceq)$ is isomorphic to $(P_{m,\mu\cdot\left|\det\bB\right| - 1},\preceq)$. So we can apply Lemma~\ref{lemma_semigroup_partial_order} to obtain the upper bound $\mu^{m-1}\cdot\left|\det\bB\right|^{m-1}$ on the size of the largest antichain in $(\mathcal{T},\preceq)$. 
		This holds for all $l^m$ tiles. 
		Hence, by the pigeonhole principle we must ensure that 
		\begin{align*}
			2^{n-m}\geq \mu^{m-1}\cdot \left|\det\bB\right|^{m-1}\cdot l^m
		\end{align*}
		to guarantee that two subsums are, up to adding multiples of $\ba_1,\ldots,\ba_m$, in the same tile. 
		To get the best possible bound in \eqref{semigroup_inequality_n-1}, we have to choose $l$ as large as possible such that the inequality above holds. Solving for $l$ yields
		\begin{align*}
			\frac{2^{\frac{n - m}{m}}}{\mu^{\frac{m-1}{m}}\cdot \left|\det\bB\right|^{\frac{m-1}{m}}}\geq l.
		\end{align*}
		In addition to satisfying this inequality, $l$ must be an integer. One could simply take the left hand side rounded down. However, to not complicate the presentation of the result, we instead divide the left hand side by two and set $l := 2^{-1}\cdot 2^{(n-m)/m}\cdot (\mu\cdot\left|\det\bB\right|)^{-(m-1)/m}$. This choice is feasible, because we may assume that the left hand side of the inequality is at least one, otherwise, the bound in Theorem~\ref{thm_semigroup_approx_n-1} would hold trivially as $\app_{\bB,k}(\bA) \leq 1/2$ for all $k\geq m$. So dividing by two guarantees the existence of an integer that is at least as large as our chosen value $l$. Substituting $l$ into (\ref{semigroup_inequality_n-1}) proves the claim. 
	\end{proof}
	
	The lower bound in Lemma~\ref{lemma_semigroup_partial_order} implies that we cannot asymptotically improve upon the bound $\mu^{m-1}\cdot\left|\det\bB\right|^{m-1}$ when estimating the size of a largest antichain.
	
	\begin{proof}[Proof of Theorem~\ref{thm_semigroup_general_bound}]
		The claim follows immediately when $k = n$. So suppose that $k\leq n - 1$. Let ${\bb} = \bA {\bx}$ for ${\bx}\in\Z^n_{\geq 0}$ be given. We set $\bx^0 := \bx$ and $I_{0} := \supp(\bx)\backslash \lbrack m\rbrack$.
		For $j \geq 1$, we define $I_j$ and $\bx^j$ recursively as follows:
		\begin{enumerate}
			\item Let $\bA_{I_{j-1}} $ be the matrix with  columns  in $\lbrack m\rbrack\cup I_{j-1}$ and  $\bb^{j-1} = \bA {\bx}^{j-1}$.
			\item Apply Theorem~\ref{thm_semigroup_approx_n-1} to $\bb^{j-1}$ and $\bA_{I_{j-1}}$. We obtain an approximation $ {\bb}^j = \bA\bx^j$ for some $\bx^j\in\Z^{n}_{\geq 0}$ with $\supp(\bx^j)\subseteq \lbrack m\rbrack \cup I_{j-1}$ and $\left|\supp(\bx^j)\backslash\lbrack m\rbrack\right| < |I_{j-1}|$. 
			\item If $\left|\supp(\bx^j)\backslash\lbrack m\rbrack\right| \leq k - m$, we are done. 
			Otherwise, define $I_j := \supp(\bx^j)\backslash\lbrack m\rbrack$ and return to the first step.
		\end{enumerate}
		Since we reduce $|I_j|$ by at least one in each step, this procedure terminates with some $\bx^l$ such that $\left|\supp(\bx^l)\backslash\lbrack m\rbrack\right|\leq k - m$ and $1\leq l\leq n-k$. We observe
		\begin{align}
			\label{inequ_semigroup_approx_n-1}
			\app_k(\bA)\leq\app_{\bB,k}(\bA)\leq \sum_{j = 0}^{l-1} \app_{\bB,\left|I_j\right| + m - 1}(\bA_{I_j}).
		\end{align}
		It remains to bound $\app_{\bB,\left|I_j\right| + m - 1}(\bA_{I_j})$ using Theorem~\ref{thm_semigroup_approx_n-1}. To this end let 
		$\tau := 2\cdot \mu^{(m-1)/m}\left|\det\bB\right|^{(m-1)/m}.$
		Inequality (\ref{inequ_semigroup_approx_n-1}) and Theorem~\ref{thm_semigroup_approx_n-1} give 
		\begin{align*}
			\app_{k}(\bA)\leq \tau\cdot\sum_{j = 0}^{l-1}\left(\frac{1}{2^{\left| I_j\right|}}\right)^\frac{1}{m}\leq \tau\cdot\sum_{j = 0}^{n - k - 1}\left(\frac{1}{2^{n - m - j}}\right)^\frac{1}{m}
		\end{align*}
		since $l\leq n - k$.  With the geometric sum formula one obtains
		\begin{align*}
			\sum_{j = 0}^{n - k - 1}\left(\frac{1}{2^{n - m - j}}\right)^\frac{1}{m} = \frac{1}{2^{\frac{n - m}{m}}}\cdot\sum_{j = 0}^{n- k - 1} 2^{\frac{j}{m}} = \frac{1}{2^{\frac{1}{m}} - 1}\cdot \left(\frac{1}{2^{\frac{k-m}{m}}} - \frac{1}{2^{\frac{n-m}{m}}}\right). 
		\end{align*}
	\end{proof}

	\section{Proof of Theorem~\ref{thm_semigroup_approx_2}}
	\label{s_proof_thm_semigroup_approx_2}
	Let $n\geq 3$. We abbreviate the bound in Theorem~\ref{thm_semigroup_approx_2} by 
	$$\epsilon := \frac{\varphi(n - 2)}{2\varphi(n - 2) + 1}\cdot a_1.$$  
	We first argue that without loss of generality we can suppose that $b := a_2+\cdots+ a_n$: 		
	If $b = \lambda_1 a_1 + \cdots + \lambda_n a_n$ for $\lambda_i\in \N$, we can assume that $\lambda_i > 0$ for all $i\geq 2$ since, otherwise, we pass to a setting with fewer integers. 
	Similarly, if $\lambda_i > 1$ for  an index $i\geq 2$, replace the integer $a_i$ by $\tilde{a}_i :=\lambda_i a_i$. 
	Since we fix $a_1$ in our non-negative combination, we also assume that $\lambda_1 = 0$.
	
	Let $a_i = z_ia_1 +  r_i$ with $r_i\in  (- a_1 / 2,a_1/2\rbrack$ and $z_i\in\N $ for $i\in\lbrace 2,\ldots,n\rbrace$.  Our target is to approximate $a_2 + \cdots + a_n$ using $a_1$ and at most one additional integer $a_i$. To do so, we work with the residues $r_i$ and approximate the sum of residues $r_2 + \cdots + r_n$ using only one $r_i$. Without loss of generality we can assume that $r_2 + \cdots + r_n > 0$. Define $I := \lbrace i\in\lbrace 2,\ldots,n\rbrace : r_i > 0\rbrace$ and, hence, $r_2 + \cdots + r_n \leq \sum_{i\in I}r_i$. We define $\tilde{s}_i := \lfloor \sum_{j\in I} a_j / a_i \rfloor$ for all $i\in I$. 
	Consider the following integer knapsack problem:
	\begin{equation}\label{intknap}
		\max \sum_{i\in I}r_ix_i \ \text{ s.t. } \ \sum_{i\in I}\frac{1}{\tilde{s}_i + 1}x_i < 1, \ x_i\in\Z_{\geq 0} \text{ for all }i\in I.
	\end{equation}
	The knapsack constraint is chosen such that the all-ones vector is a feasible solution to the problem. This is true since
	\begin{align*}
		\sum_{i\in I}\frac{1}{\tilde{s}_{i} + 1} < \sum_{i\in I}\frac{a_i}{\sum_{j\in I}a_j} = 1.
	\end{align*}
	Let $z_1^*,\ldots,z^*_n$ be an optimal solution for \eqref{intknap} and $\tilde{s}_{i^*}r_{i^*} = \max_{i\in I} \tilde{s}_i r_i$. From Theorem~\ref{thm_kk_variant} from the appendix, which is a variant of a result by Kohli and Krishnamurti \cite[Theorem 2]{kohlikrishna1992greedyknapsack},
	one obtains
	\begin{align*}
		\sum_{i\in I}r_iz^*_i\leq \left(1 + \varphi(|I| - 1)\right) \tilde{s}_{i^*}r_{i^*}. 
	\end{align*}
	Note that to get this inequality, one has to slightly adjust the proof of \cite[Theorem 2]{kohlikrishna1992greedyknapsack}.
	This is shown in detail in the appendix.
	Then, since the all-ones vector is a feasible integral solution to \eqref{intknap}, we conclude that
	\begin{align}
		\label{inequ_semigroup_proof_k=2}
		\sum_{i\in I}r_i - \tilde{s}_{i^*}r_{i^*}\leq \sum_{i\in I} r_iz^*_i - \tilde{s}_{i^*}r_{i^*}\leq \varphi(|I| - 1) \tilde{s}_{i^*}r_{i^*}.
	\end{align}
	We next derive an upper bound on $\tilde{s}_{i^*}r_{i^*}$. 
	
	Let $s_{i^*} := \lfloor \sum_{j=2}^na_j/a_{i^*}\rfloor$ and denote by $\bar{b}$ the least non-negative value in $r_2 + \cdots + r_n + a_1\Z = b + a_1\Z$. Suppose next that there exists an integer $k\in \lbrace 0,\ldots,s_{i^*}\rbrace$ such that $kr_{i^*}\in \lbrack \bar{b} - \epsilon, \bar{b} + \epsilon\rbrack$. This implies that there exists an integer  $\lambda $ such that $|\lambda a_1 + k a_{i^*} - b| = |\bar{b} - kr_{i^*}|\leq \epsilon$. The integer $\lambda$ is non-negative because $k a_{i^*}\leq a_2+\cdots+ a_n = b$ by  definition of $s_{i^*}$. Hence, the theorem is correct under this assumption. 
	
	From now on we assume that $kr_{i^*}\notin \lbrack \bar{b} - \epsilon, \bar{b} + \epsilon\rbrack$ for all $k\in \lbrace 0,\ldots,s_{i^*}\rbrace$.
	We claim that we can strengthen this assumption to $kr_{i^*} < \bar{b} - \epsilon$ for all $k\in\lbrace 0,\ldots,s_{i^*}\rbrace$. Suppose this is not true. Let $k^*\in\lbrace 0,\ldots,s_{i^*}\rbrace$ be the smallest index such that $k^*r_{i^*} > \bar{b} + \epsilon$. As $0 < \bar{b} + \epsilon$, it follows that $k^*\geq 1$. We obtain
	\begin{align*}
		r_{i^*} = k^*r_{i^*} - (k^* - 1)r_{i^*} > \bar{b} + \epsilon - (\bar{b} -\epsilon) = 2\epsilon \geq \frac{1}{2} a_1,
	\end{align*}
	where we use $\epsilon\geq 1/4 a_1$ and $(k^* - 1)r_{i^*}<\bar{b}-\epsilon$ due to the minimality of $k^*$ and $(k^* - 1)r_{i^*}\notin \lbrack \bar{b} - \epsilon, \bar{b} + \epsilon\rbrack$. This contradicts that $r_{i^*} \leq 1/2 a_1$. Thus, we can assume that $kr_{i^*} < \bar{b} - \epsilon$ for all $k\in\lbrace 0,\ldots,s_{i^*}\rbrace$. 
	
	Since $\tilde{s}_{i^*} =\lfloor \sum_{j\in I} a_j / a_i^* \rfloor\leq  \lfloor \sum_{j=2}^na_j/a_i^*\rfloor = s_{i^*}$ we obtain the relation 
	\begin{align}
		\label{inequ_semgroup_proof_k=2_r_{i^*}}
		\tilde{s}_{i^*}r_{i^*} < \bar{b} - \epsilon.
	\end{align}
	By definition, we have $\bar{b} < a_1$. This bound can be strengthened to $\bar{b} < a_1 - \epsilon$ since, if $\bar{b}\geq a_1 - \epsilon$, we can approximate $b$ with an error of size at most $\epsilon$ by using $\lceil b/a_1 \rceil a_1$. Inserting (\ref{inequ_semgroup_proof_k=2_r_{i^*}}) and $\bar{b} < a_1 - \epsilon$ into (\ref{inequ_semigroup_proof_k=2}) results in
	\begin{align*}
		\sum_{i\in I}r_i - \tilde{s}_{i^*}r_{i^*}\leq \varphi(|I| - 1) \tilde{s}_{i^*}r_{i^*} < \varphi(|I| - 1)\left(\bar{b} - \epsilon\right) < \varphi(|I| - 1) (a_1 - 2\epsilon).
	\end{align*}
	Note that $\varphi(|I| - 1)\leq \varphi(n - 2)$ as $|I|\leq n- 1$. So we obtain
	\begin{align*}
		\sum_{i\in I}r_i - \tilde{s}_{i^*}r_{i^*} < \varphi(n - 2)(a_1 - 2\epsilon) = \varphi(n - 2)\left( 1 - \frac{2\varphi(n - 2)}{2\varphi(n - 2) + 1}\right)a_1 = \epsilon.
	\end{align*}
	Since $\bar{b} \leq \sum_{i\in I}r_i$, we deduce
	\begin{align*}
		\bar{b} - \tilde{s}_{i^*}r_{i^*} \leq\sum_{i\in I}r_i - \tilde{s}_{i^*}r_{i^*}< \epsilon.
	\end{align*}
	However, from (\ref{inequ_semgroup_proof_k=2_r_{i^*}}), we get $\tilde{s}_{i^*}r_{i^*} < \bar{b} - \epsilon$, which implies $\epsilon < \bar{b} - \tilde{s}_{i^*}r_{i^*}$. This contradicts the inequality above and proves the upper bound on $\app_{a_1,2}(\ba)$.

	\section{Instances with Large Approximation Error}
	\label{s_semigroups_lower_bounds}
	In this section we outline the lower bound constructions that are underlying the statements in Propositions~\ref{prop_semigroup_lower_bound_m_1_n-1}, \ref{prop_semigroup_lower_bound_m_1_n_2}, and \ref{prop_semigroup_lower_bound_general}. 
	Our proofs utilize the following number theoretic property.
	\begin{lemma}
		\label{lemma_semigroup_lower_bound}
		Let $\lambda_0,\ldots,\lambda_{n-1}\in\N$ be given such that $\sum_{i=0}^{n-1}\lambda_i2^i \equiv 2^n - 1 \mod 2^n$ and $\sum_{i=0}^{n-1}\lambda_i \leq n$. Then we have $\lambda_i = 1$ for all $i\in\lbrace 0,\ldots,n-1\rbrace$.
	\end{lemma}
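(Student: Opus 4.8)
The plan is to induct on $n$. The base case $n=1$ is immediate: the hypotheses become $\lambda_0\equiv 1\mod 2$ and $\lambda_0\le 1$, which together with $\lambda_0\in\N$ force $\lambda_0=1$. (The case $n=0$ is vacuous.) For the inductive step I assume the statement for $n-1$ and take $\lambda_0,\ldots,\lambda_{n-1}\in\N$ with $\sum_{i=0}^{n-1}\lambda_i2^i\equiv 2^n-1\mod 2^n$ and $\sum_{i=0}^{n-1}\lambda_i\le n$.

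The key move is to peel off the lowest bit. Reducing the congruence modulo $2$ shows $\lambda_0$ is odd, so write $\lambda_0=1+2\mu_0$ with $\mu_0\in\N$. Substituting this into $\sum_{i=0}^{n-1}\lambda_i2^i\equiv -1\mod 2^n$ and cancelling a factor of $2$ from the resulting congruence $2\big(\mu_0+\sum_{i=1}^{n-1}\lambda_i2^{i-1}\big)\equiv -2\mod 2^n$ gives
\[
\mu_0+\lambda_1+\lambda_2\cdot 2+\cdots+\lambda_{n-1}\cdot 2^{n-2}\equiv 2^{n-1}-1\mod 2^{n-1}.
\]
I then set $\lambda_0':=\mu_0+\lambda_1$ and $\lambda_j':=\lambda_{j+1}$ for $j=1,\ldots,n-2$, so that $\sum_{j=0}^{n-2}\lambda_j'2^j\equiv 2^{n-1}-1\mod 2^{n-1}$, and I check that the digit sum does not grow:
\[
\sum_{j=0}^{n-2}\lambda_j'=\mu_0+\sum_{i=1}^{n-1}\lambda_i=\Big(\sum_{i=0}^{n-1}\lambda_i\Big)-1-\mu_0\le n-1.
\]
Thus the inductive hypothesis applies to $(\lambda_0',\ldots,\lambda_{n-2}')$ and yields $\lambda_j'=1$ for all $j$; equivalently $\lambda_2=\cdots=\lambda_{n-1}=1$ and $\mu_0+\lambda_1=1$.

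It remains to unwind $\mu_0+\lambda_1=1$. If $\mu_0=0$ then $\lambda_1=1$ and $\lambda_0=1$, so every $\lambda_i=1$ and we are done. The only alternative is $\mu_0=1$, $\lambda_1=0$, hence $\lambda_0=3$; but then $\sum_{i=0}^{n-1}\lambda_i=3+0+(n-2)=n+1>n$, contradicting the hypothesis. This completes the induction. I expect no genuine obstacle here; the one point requiring care is the bookkeeping in the inductive step — verifying that the reduced tuple $(\lambda_j')$ satisfies \emph{both} hypotheses at level $n-1$ (in particular that its digit sum is $\le n-1$, which is where the slack produced by $\lambda_0=1+2\mu_0$ is consumed), and correctly ruling out the stray case $\mu_0=1$ using the digit-sum bound rather than the congruence.
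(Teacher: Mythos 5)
Your induction is correct, and it takes a genuinely different route from the paper. The paper argues combinatorially: interpreting the residue $2^n-1 \equiv 1\cdots 1$ in binary, it observes that each addition of $2^i$ increases the number of $1$-bits by at most one (flipping a $0$ to a $1$ adds one $1$; flipping a $1$ causes a carry and cannot increase the count), so that at least $n$ additions are required to reach a word of $n$ ones, and by the budget $\sum\lambda_i\le n$ every addition must hit a fresh $0$-bit, forcing $\lambda_i=1$ for all $i$. Your proof instead proceeds by induction on $n$: peel off the low bit by writing $\lambda_0=1+2\mu_0$, divide the congruence by $2$, absorb the carry $\mu_0$ into the new lowest coefficient $\lambda_0':=\mu_0+\lambda_1$, and check that the digit sum drops to at most $n-1$ (the $2\mu_0$ in $\lambda_0$ supplying exactly the slack needed); the inductive hypothesis then pins down everything except the single case $\mu_0=1$, which the digit-sum bound kills. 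What the paper's argument buys is a one-paragraph intuition that explains \emph{why} the result is true; it is, however, somewhat informal about carry propagation (in particular why no $\lambda_i$ can exceed $1$ once one knows there are no carries). What your induction buys is a fully rigorous, self-contained proof in which the carry bookkeeping is explicit and each step is elementary. One small point worth spelling out is that $\mu_0+\lambda_1=1$ with $\mu_0,\lambda_1\in\N$ leaves only the two cases $(\mu_0,\lambda_1)\in\{(0,1),(1,0)\}$, which you use implicitly; otherwise the argument is airtight.
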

	\begin{proof}
		Consider the involved numbers as binary numbers modulo $2^n$. The number $2^n-1$ is represented in binary as $1\ldots1$, a number with exactly $n$ digits being $1$. The number $2^i$ corresponds to $0\ldots010\ldots0$, a binary number with precisely one $1$ at the $i$-th digit. Given the congruence $\sum_{i=0}^{n-1}\lambda_i2^i \equiv 2^n - 1 \mod 2^n$, we have to reach the binary number $1\ldots1$ by adding binary numbers of the form $0\ldots010\ldots0$. Each addition with the latter number flips the digit at the $i$-th position. If we flip a $0$ to a $1$, the number of $1$'s in the new binary number increases by one. If we flip a $1$ to $0$, the number of $1$'s remains the same or even decreases due to carry-over effects. We only have $n$ flips available due to the assumption $\sum_{i=0}^{n-1}\lambda_i \leq n$. So the only way to reach the binary number with exactly $n$ digits being $1$ is by flipping each digit separately. This corresponds to $\lambda_i = 1$ for all $i\in\lbrace 0,\ldots,n-1\rbrace$.
	\end{proof}
	We begin by proving that the bounds in Theorem~\ref{thm_semigroup_m_1_n-1} are tight. 
	\begin{proof}[Proof of Proposition~\ref{prop_semigroup_lower_bound_m_1_n-1}]
		Let $a_1 := 2^{k}$ and $a_i := 2^{k+1} + 2^{i-2}$ for $i\in\lbrace 2,\ldots,k+1\rbrace$. For the remaining numbers, we suppose that $a_j > 2^{k+1}(k+1)$ for all $j\geq k+2$. We want to approximate 
		\begin{align*}
			b := \sum_{i=1}^{k+1}a_i = 2^{k+1}(k+1) -1
		\end{align*} 
		with $k$ numbers. 
		If $\bar{b}\in \N$ is a possible approximation with $b\neq\bar{b}$, we have $\left| b - \bar{b}\right| \geq 1 = 1/2^{k}\cdot a_1$. So it suffices to verify that it is impossible to represent $b$ using at most $k$ integers from $a_1,\ldots,a_n$. Since $a_j > 2^{k+1}(k+1) > b$, we can restrict ourselves to the first $k+1$ integers. 
		Let $\lambda_1,\ldots,\lambda_{k+1}\in \N$ be such that $\sum_{i=1}^{k+1}\lambda_i a_i = b$. We have $\sum_{i=2}^{k+1}\lambda_i \leq k$, because otherwise we get $\sum_{i=2}^{k+1}\lambda_i a_i > 2^{k+1}(k+1) > b$. We also have $\sum_{i=1}^{k+1}\lambda_i a_i \equiv b \mod 2^{k+1}$. This is equivalent to $\tilde{\lambda}_12^{k} + \sum_{i=2}^{k+1}\lambda_i 2^{i-2} \equiv 2^{k+1} - 1 \mod 2^{k+1}$, where $\tilde{\lambda}_1\in\lbrace 0,1\rbrace$ and $\tilde{\lambda}_1 + \sum_{i=2}^{k+1}\lambda_i\leq {k+1}$. By Lemma~\ref{lemma_semigroup_lower_bound}, we conclude that $\lambda_i = 1$ for all $i\in\lbrace 2,\ldots,k+1\rbrace$ and $\tilde{\lambda}_1 = 1$. Thus, there exists precisely one way to generate $b$ using $a_1,\ldots, a_{k+1}$, which is $b=a_1+\cdots+a_{k+1}$. So there is no exact representation that uses fewer than $k+1$ numbers.
	\end{proof}
	Next, we show that our bounds in Theorem~\ref{thm_semigroup_approx_2} are tight as well.
	\begin{proof}[Proof of Proposition~\ref{prop_semigroup_lower_bound_m_1_n_2}]
		For $n = 2$, the claim follows directly since $\varphi(n-2)$ is the empty sum which equals $0$. When $n=3$, we apply the tight bound from Proposition~\ref{prop_semigroup_lower_bound_m_1_n-1} and observe that $\varphi(1) = 1/2$, which implies $\varphi(1)/(2\varphi(1) + 1) = 1/4$. So let $n\geq 4$. 
		Observe that $\app_{2}(\ba) = \app_{2}(\kappa\cdot\ba)$ for some scalar $\kappa > 0$. The same holds for $\app_{a_1,2}(\ba)$. So we assume without loss of generality that $a_1 := 1$ and construct a rational vector $\ba\in\Q^n$ such that 
		\begin{align*}
			\frac{\varphi(n-2)}{2\varphi(n-2) + 1}\leq \app_2(\ba).
		\end{align*}
		Recall that $\lbrace t_i + 1\rbrace_{i\in \N}$ denotes the Sylvester sequence. Set $r_n := 1/(2\varphi(n-2) + 1)$ and define 
		\begin{align*}
			z_i := \prod_{\substack{j = 0, \\ j \neq n - i}}^{n-2}(t_j + 1) \text{ and } r_i := \frac{1}{t_{n-i}}\cdot r_n
		\end{align*}
		for $i\in\lbrace 2,\ldots,n\rbrace$. Let $\tau > 0$ be a fixed sufficiently large natural number. It will become clear from the arguments below what sufficient means in this context. We define $a_i := \tau\cdot z_i + r_i$ and try to approximate $b := a_2 + \cdots + a_n$. First, we approximate $b$ using $a_1$ and one more $a_i$ similar to the proof of Theorem~\ref{thm_semigroup_approx_2}. We define $s_i := \lfloor \sum_{j=2}^n a_j/a_i\rfloor$. So our candidates to approximate $a_2 + \cdots + a_n$ from below are $\mu a_i + \lambda a_1$ with some suitable $\lambda\in \N$ for $\mu\in\lbrace 0,\ldots, s_i\rbrace$ and $i\in\lbrace 2,\ldots, n\rbrace$. 
		Suppose that $\mu=0$. Then we approximate $b$ using only $a_1 = 1$. This is equivalent to rounding $r_2+\cdots+r_n$ to the nearest integer. We have
		\begin{align*}
			r_2+\cdots+r_n = r_n \sum_{i = 2}^{n}\frac{1}{t_{n-i}} = r_n(\varphi(n-2) + 1) = \frac{\varphi(n-2) + 1}{2\varphi(n-2) + 1}.
		\end{align*}
		Rounding this expression to the nearest integer gives us precisely an approximation error of  $\varphi(n-2)/(2\varphi(n-2) + 1)$. Hence, to improve upon this bound, we have to use another $a_i$. For that, we need an upper bound on $s_i$ to determine how often we can use each $a_i$. We have 
		\begin{align*}
			s_i = \left\lfloor \frac{\sum_{j=2}^n a_j}{a_i}\right\rfloor = \left\lfloor \frac{\tau \sum_{j=2}^n z_j}{\tau z_i + r_i} + \frac{\sum_{j=2}^nr_j}{\tau z_i + r_i}\right\rfloor \leq \left\lfloor \frac{\sum_{j=2}^n z_j}{z_i} + \frac{1}{\tau}c\right\rfloor,
		\end{align*}
		where we abbreviate $c = \sum_{j=2}^n r_j/z_i$. We continue the calculation and utilize the identity $\sum_{j=0}^{n-2}1/(t_{j} + 1) = 1 - 1/t_{n-1}$ from \cite{sylvestersequence1880sylvester} to obtain 
		\begin{align*}
			s_i \leq \left\lfloor \frac{\sum_{j=2}^n z_j}{z_i} + \frac{1}{\tau}c\right\rfloor &= \left\lfloor (t_{n-i} + 1)\cdot\sum_{j=2}^{n}\frac{1}{t_{n-j} + 1} + \frac{1}{\tau}c\right\rfloor \\
			& = \left\lfloor (t_{n-i} + 1)\cdot\sum_{j=0}^{n-2}\frac{1}{t_{j} + 1} + \frac{1}{\tau}c\right\rfloor \\
			& = \left\lfloor (t_{n-i} + 1)\cdot\left(1-\frac{1}{t_{n-1}}\right) + \frac{1}{\tau}c\right\rfloor \\
			& = \left\lfloor t_{n-i} + 1 - \frac{t_{n-i} + 1}{t_{n - 1}} + \frac{1}{\tau}c\right\rfloor = t_{n-i},
		\end{align*}
		where we use $t_{n-i} + 1 < t_{n-1}$ for all $i\in\lbrace 2,\ldots,n\rbrace$, as $n\geq 4$, and that $\tau$ is sufficiently large in the last step. We established $s_i\leq t_{n-i}$ for all $i\in\lbrace 2,\ldots,n\rbrace$. This implies $s_ir_i\leq r_n$ by the definition of $r_i$ for all $i\in\lbrace 2,\ldots,n\rbrace$. 
		We get 
		\begin{align*}
			r_2 + \cdots + r_n - s_ir_i \geq r_2 + \cdots + r_n - r_n = r_2 + \cdots + r_{n-1}.
		\end{align*}
		So the best approximation of $b$ using $\mu a_i + \lambda a_1$ with $\mu\geq 1$ gives us
		\begin{align*}
			\left| b - (\mu a_i + \lambda a_1)\right| \geq \left| r_2 + \cdots + r_n - s_ir_i\right|\geq \left| r_2 + \cdots + r_{n-1}\right|.
		\end{align*}
		The approximation error equals
		\begin{align*}
			r_2 + \cdots + r_{n-1} = r_n \cdot \sum_{j = 2}^{n-1} \frac{1}{t_{n-j}} = \frac{\varphi(n-2)}{2\varphi(n-2) + 1}.
		\end{align*}
		This is again our desired bound. So one cannot improve upon this bound by using $a_1$ and one of the other $a_i$. As a last step, we ensure that there is no other integral non-negative combination of at most two integers $a_i$ and $a_j$ where none of them equals $a_1$. Let $\lambda_i,\lambda_j\in \N$. Then we have
		\begin{align*}
			\left|b - \lambda_i a_i - \lambda_j a_j\right| = \left| \tau\left(\sum_{l=2}^{n}z_l - \lambda_i z_i - \lambda_j z_j\right) + \sum_{l=2}^n r_l - \lambda_ir_i - \lambda_jr_j\right|.
		\end{align*} 
		We abbreviate the last term with $\left|\tau d + e\right|$. We have $d\neq 0$ since $\lambda_i z_i + \lambda_j z_j$ is divisible by $(t_k + 1)$ for $t_k\notin\lbrace t_{n-i},t_{n-j}\rbrace$ but $\sum_{j=2}^{n}z_j$ is not. This implies that the error grows as $\tau$ becomes larger. This holds for all possible coefficients $\lambda_i$ and $\lambda_j$. Recall that we have $\lambda_i\leq s_i\leq t_{n-i}$ for all $i\in\lbrace 2,\ldots,n\rbrace$ if $\tau$ is large enough. So we only need to consider finitely many combinations of the form $(\lambda_i,\lambda_j)$ for $i,j\in\lbrace 2,\ldots,n\rbrace$. Hence, by choosing $\tau$ large enough, we increase the approximation error for any combination that can be formed without using $a_1$. This shows that $\varphi(n-2)/(2\varphi(n-2) + 1)\cdot a_1\leq \app_{2}(\ba)$.
	\end{proof}
	Our final step is to show that it is necessary to incorporate additional information from the matrix to obtain an approximation error that decreases when $k$ increases for $m\geq 2$.
	
	\begin{proof}[Proof of Proposition~\ref{prop_semigroup_lower_bound_general}]
		Let $q := \lfloor \sqrt{n-1}\rfloor$. Define 
		\begin{align*}
			\ba_1 := (n-1)\begin{pmatrix}
				1 \\ 0
			\end{pmatrix}, \ba_2 := (n-1)\begin{pmatrix}
				0 \\ 1
			\end{pmatrix}, \br := \begin{pmatrix}
				1 \\ q
			\end{pmatrix},
		\end{align*}
		and $\bB := (\ba_1,\ba_2)$. 
		Set $\ba_i := (n-1)\bz_i + \br$ with
		\begin{align*}
			\bz_i := (2^{n-2} + 1)\begin{pmatrix}
				1 \\ 1
			\end{pmatrix} +2^{i-2} \begin{pmatrix}
				1 \\ -1
			\end{pmatrix}
		\end{align*}
		for all $i\in\lbrace 3,\ldots,n\rbrace$. 
		The coefficient $2^{n-2} + 1$ is chosen such that each component of $\bz_i$ is positive for all $i\in\lbrace 3,\ldots,n\rbrace$. This guarantees $\ba_i \in\pos\lbrace \ba_1,\ba_2\rbrace$ for all $i\in\lbrace 3,\ldots,n\rbrace$. 
		We aim to approximate $\bb := \ba_1 + \cdots + \ba_n$ with some $\tilde{\bb}:=\lambda_1\ba_1 + \cdots + \lambda_n\ba_n$ for $\lambda_1,\ldots,\lambda_n\in \N$, where $n-k$ of the $\lambda_i$ are zero. 
		We distinguish between three cases based on the value of $\sum_{i=3}^n\lambda_i$: 
		\begin{enumerate}
			\item $\sum_{i=3}^n\lambda_i\leq n-3$,
			\item $\sum_{i=3}^n\lambda_i = n-2$, and 
			\item $\sum_{i=3}^n\lambda_i\geq n-1$.
		\end{enumerate}
		It will turn out that we only explicitly need the $q/(n-1)$ in the first case.
		In the other cases, the approximation error is even worse than $q/(n-1)$. 
		
		\textbf{Case 1:} Suppose that $\sum_{i=3}^n\lambda_i \leq n-3$.  
		For each $\by\in \R^2$, let $\lbrace\by\rbrace$ denote the unique vector from $\by + \bB\Z^2$ that is contained in the half-open parallelepiped $(n-1)\lbrack 0,1 )^2$. 
		In particular, we have $\lbrace\bb\rbrace = \lbrace (n-2)\br\rbrace$ and $\lbrace\tilde{\bb}\rbrace = \lbrace(\lambda_3+ \cdots+ \lambda_n)\br\rbrace$.  
		We claim that the choice of $\br$ implies 
		\begin{align}
			\label{inequ_prop_semigroup_proof}
			\Vert \lbrace(n-2)\br\rbrace - \lbrace\lambda\br\rbrace\Vert_{P(\bB)} \geq \frac{q}{n-1}
		\end{align}
		for all $\lambda\in\lbrace 0,\ldots, n-3\rbrace$. 
		In fact, we will show a slightly stronger statement: Let $\Lambda := (\ba_1,\ba_2,\br)\Z^3$. We prove that, if $\bx,\by\in \Lambda$ with $\bx\notin \by + \bB\Z^2$, then $\Vert \bx-\by\Vert_\infty\geq q$. This implies the claim above since $\lbrace(n-2)\br\rbrace\notin \lbrace\lambda\br\rbrace + \bB\Z^2$ and 
		$\Vert\bx\Vert_{P(\bB)} = 1/(n-1)\Vert\bx\Vert_\infty$ for all $\bx\in\R^2$. 
		
		Observe that minimizing $\Vert \bx-\by\Vert_\infty$ over $\bx,\by\in \Lambda$ with $\bx\notin \by + \bB\Z^2$ is equivalent to minimizing $\Vert \bz\Vert_\infty$ over $\bz\in\Lambda\backslash \bB\Z^2$. Any such lattice vector $\bz$ is contained in $t\br + \bB\Z^2$ for some $t\in\lbrack n-2\rbrack$. Let us fix such a lattice vector $\bz$. Suppose that the first coordinate of $\bz$ is non-negative, i.e., $z_1 \geq 0$, otherwise replace $\bz$ with $-\bz$. Since $t\in\lbrace 1,\ldots,n-2\rbrace$ and thus $t \neq 0$, we get $z_1 > 0$. If $z_1 \geq q$, we already have $\Vert\bz\Vert_\infty\geq q$. So suppose that $z_1 \in\lbrace 1,\ldots,q-1\rbrace$. This implies $t\in\lbrack q-1\rbrack$. 
		Consider the second coordinate of $\br$. We have
		\begin{align*}
			q \leq tr_2 \leq q^2 - q \leq n - 1 - q.
		\end{align*}
		Since $z_2 = tr_2 + s (n-1)$ for some integer $s\in\Z$, we obtain 
		\begin{align*}
			|z_2| = |tr_2 + s (n-1)|\geq q.
		\end{align*}
		Thus we conclude that $\Vert \bz\Vert_\infty\geq q$ and the inequality in (\ref{inequ_prop_semigroup_proof}) follows. Since adding multiples of $\ba_1$ and/or $\ba_2$ does not affect the estimate in (\ref{inequ_prop_semigroup_proof}), 
		we cannot achieve an approximation better than $q/(n-1)$.
		
		\textbf{Case 2:} Assume that $\sum_{i=3}^n\lambda_i = n-2$. 
		The vector $\tilde{\bb} - (\lambda_1\ba_1 + \lambda_2\ba_2)$ equals
		\begin{align*}
			(n-1)\left((2^{n-2} + 1) (n-2) \begin{pmatrix}
				1 \\ 1
			\end{pmatrix}
			+ \left(\sum_{i=3}^{n}\lambda_i2^{i-2}\right)\begin{pmatrix}
				1 \\ -1
			\end{pmatrix}\right) + (n-2)\br
		\end{align*}
		and $\bb$ equals
		\begin{align*}
			\ba_1 + \ba_2 + (n-1)\left((2^{n-2} + 1) (n-2)\begin{pmatrix}
				1 \\ 1
			\end{pmatrix} + 2\left(2^{n-2} - 1\right)\begin{pmatrix}
				1 \\ -1
			\end{pmatrix}\right) + (n-2)\br
		\end{align*}
		since the identity $\sum_{i=3}^n2^{i-2} = 2(2^{n-2} - 1)$ holds.
		We get the following difference
		\begin{align*}
			\tilde{\bb} - \bb = (\lambda_1 - 1)\ba_1 + (\lambda_2 - 1)\ba_2 + 2 (n-1)\underbrace{\left(\sum_{i=3}^{n}\lambda_i2^{i-3} - (2^{n-2} - 1)\right)}_{=:d}\begin{pmatrix}
				1 \\ -1
			\end{pmatrix}.
		\end{align*}
		Suppose that $d\neq 0$. We have $d\in\Z$ and assume that $d\geq 1$. Then the first coordinate of $\tilde{\bb}-\bb$ satisfies 
		\begin{align*}
			(\lambda_1 - 1)(n-1) + 2(n-1)d \geq n-1.
		\end{align*}
		Therefore, the approximation error can be estimated by $\Vert \bb - \tilde{\bb}\Vert_{P(\bB)}\geq 1 > q/(n-1)$. If $d\leq -1$, the same argument applies to the second coordinate. 
		We are left with the case when $d = 0$. Under this assumption, we have to have $\lambda_1 = 1 = \lambda_2$, otherwise one coordinate is again at least $n-1$ in absolute value. However, if $\lambda_1 = 1 = \lambda_2$ and $d=0$, we get $\lambda_i = 1$ for all $i\in\lbrack n\rbrack$ by Lemma~\ref{lemma_semigroup_lower_bound}, which is not a sparse representation.
		
		\textbf{Case 3:} We are left with the case $\sum_{i=3}^n\lambda_i \geq n-1$. By construction, the vectors $\ba_3,\ldots,\ba_n$ satisfy $\bm{1}^\top\ba_i = \bm{1}^\top \ba_j$ for $i,j\in\lbrace 3,\ldots,n\rbrace$. We use this fact to measure the distance between $\bb$ and $\tilde{\bb}$ as follows:
		\begin{align*}
			\bm{1}^\top\tilde{\bb} - \bm{1}^\top\bb \geq \bm{1}^\top (-\ba_1-\ba_2) + \bm{1}^\top\ba_3 \sum_{i=3}^n (\lambda_i - 1)\geq -2(n-1) + \bm{1}^\top\ba_3,
		\end{align*}
		where in the final inequality we apply the assumption $\sum_{i=3}^{n}\lambda_i\geq n-1$. Next we compute
		\begin{align*}
			\bm{1}^\top\ba_3 = (n-1)\bm{1}^\top\bz_3 + \bm{1}^\top\br =  2^{n-1}(n-1) + 2(n-1) + q + 1,
		\end{align*}
		which gives us $\bm{1}^\top\tilde{\bb} - \bm{1}^\top\bb\geq 2^{n-1}(n-1) + q + 1 > 2(n-1)$ as $n\geq 3$. We conclude with
		\begin{align*}
			\Vert \tilde{\bb} - \bb\Vert_{P(\bB)} = \frac{1}{n-1}\Vert \tilde{\bb} - \bb\Vert_\infty \geq \frac{1}{2(n-1)} \bm{1}^\top(\tilde{\bb} - \bb) > 1 > \frac{q}{n-1}
		\end{align*}
		using that $2\Vert \bx\Vert_\infty\geq\Vert\bx\Vert_1\geq \bm{1}^\top\bx$ for all $\bx\in\R^2$. 
		This shows that $\tilde{\bb}$ is too far away to be a decent approximation.
	\end{proof}	
	
	\bmhead{Acknowledgements} The authors are grateful to the reviewers of this manuscript and the reviewers of the extended abstract for their valuable suggestions and comments.
	
	\bmhead{Statements and Declarations}
	\begin{itemize}
		\item \textbf{Competing Interests/Funding:} The authors have no competing interests to declare that are relevant to the content of this article.
		\item \textbf{Availability of data and materials:} There is no data and materials information in an additional file.
	\end{itemize}

	\bibliographystyle{plain}
	\bibliography{references2}
	
	\appendix
	\section{Estimating Optimal Knapsack Values by Support One Solutions}
	
	In the proof of Theorem~\ref{thm_semigroup_approx_2}, we rely on \cite[Theorem~2]{kohlikrishna1992greedyknapsack}. This result is stated differently in \cite{kohlikrishna1992greedyknapsack} than it is applied in this paper: in particular, it is presented with the bound $1+\varphi(\infty)$ instead of $1 + \varphi(|I|-1)$. To address that the bound of $1+\varphi(\infty)$ can indeed be replaced by $1 + \varphi(|I|-1)$, one needs to make minor adjustments to the proof of Theorem~2 in \cite{kohlikrishna1992greedyknapsack}. For the sake of completeness, we present this adapted proof for the special integer knapsack problem \eqref{intknap} that appeared in the proof of Theorem~\ref{thm_semigroup_approx_2}. 
	
	Generally speaking, a knapsack problem \eqref{intknap} is of the following form. Let $r_1,\ldots,r_n > 0$ and $s_1,\ldots,s_n \in \N_{\geq 1}$. Consider
	\begin{equation}\label{intknap_appendix}
		\max \sum_{i=1}^nr_ix_i \ \text{ s.t. } \ \sum_{i = 1}^n \frac{1}{s_i + 1}x_i < 1, \ x_i\in\Z_{\geq 0} \text{ for all }i\in \lbrack n\rbrack.
	\end{equation}
	We claim that we obtain the following bound. 
	
	\begin{theorem}
		\label{thm_kk_variant}
		Let $z^*_1,\ldots,z^*_n$ be an optimal solution of \eqref{intknap_appendix}. Let $s_{i^*}r_{i^*} = \max_{i\in\lbrack n\rbrack} s_ir_i$ for some $i^*\in\lbrack n\rbrack$. Then we have
		\begin{align*}
			\sum_{i=1}^n r_iz^*_i \leq (1 + \varphi(n-1))  s_{i^*}r_{i^*}.
		\end{align*}
	\end{theorem}
	
	\begin{proof}
		We follow the arguments in \cite{kohlikrishna1992greedyknapsack}. 
		Consider the auxiliary integer knapsack problem
		
		\begin{equation}\label{intknap_proof_appendix}
			\max \sum_{i=1}^n\frac{1}{s_i}x_i \ \text{ s.t. } \ \sum_{i = 1}^n \frac{1}{s_i + 1}x_i < 1, \ x_i\in\Z_{\geq 0} \text{ for all }i\in \lbrack n\rbrack.
		\end{equation}
		Let $y^*_1,\ldots,y_n^*$ be an optimal solution of \eqref{intknap_proof_appendix}. It suffices to prove 
		
		\begin{align*}
			\frac{1}{s_{i^*}r_{i^*}}\cdot\sum_{i=1}^n r_iz^*_i \leq \sum_{i=1}^n \frac{1}{s_i}y^*_i \leq 1 + \varphi(n-1).
		\end{align*}
		
		We first verify the lower bound. By maximality of $s_{i^*}r_{i^*}$, we have $(s_{i}r_{i})/(s_{i^*}r_{i^*})\leq 1$ for all $i\in\lbrack n\rbrack$. Rearranging, yields $r_{i}/(s_{i^*}r_{i^*}) \leq 1/s_i$ for all $i\in\lbrack n\rbrack$. Since $z_1^*,\ldots,z_n^*$ is also feasible for \eqref{intknap_proof_appendix}, we get
		
		\begin{align*}
			\frac{1}{s_{i^*}r_{i^*}}\cdot\sum_{i=1}^n r_iz^*_i = \sum_{i=1}^n \frac{r_i}{s_{i^*}r_{i^*}}z^*_i \leq \sum_{i=1}^n \frac{1}{s_i}z^*_i\leq \sum_{i=1}^n \frac{1}{s_i}y^*_i,
		\end{align*}
		which proves the lower bound.
		
		We prove the upper bound by induction on $n$. For $n=1$, the bound holds as $\varphi(0) = 0$. Let $n \geq 2$. From the induction hypothesis, we deduce that $y^*_i\geq 1$ for all $i\in\lbrack n\rbrack$, otherwise, we can remove a variable and consider an integer knapsack problem with $n-1$ variables whose optimal value equals the optimal value of \eqref{intknap_proof_appendix} and is therefore at most $1 + \varphi(n-2) < 1 + \varphi(n-1)$. Let without loss of generality $s_1 < s_2 < \cdots < s_n$. The inequalities are strict because, if we have equalities, one can aggregate variables and use the induction hypothesis. We will show that we need $s_i = t_{i-1}$ for all $i\in\lbrack n-1\rbrack$ to get close to the upper bound. After that we will deal with $s_n$ separately. 
		
		We first prove that the optimal value is small when $s_1 \geq 2$. Fix $y_1^*$ and consider 
		
		\begin{align*}
			\max \frac{y_1^*}{s_1} + \sum_{i=2}^n\frac{1}{s_i}x_i \ \text{ s.t. } \ \sum_{i = 2}^n \frac{1}{s_i + 1}x_i < 1 - \frac{y_1^*}{s_1 + 1}, \ x_i\in\Z_{\geq 0}  \text{ for all }i\in \lbrack n\rbrack\backslash \lbrace 1\rbrace.
		\end{align*}
		Take an optimal vertex solution of the continuous relaxation with ``$<$'' replaced by ``$\leq$'' in the knapsack inequality, i.e., a vertex solution over the closure of the feasible region. Since $s_2 < s_i$ for all $i\geq 3$, this solution is obtained by having $x^*_2 = (s_2 + 1)(1-y_1^*/(s_1 + 1))$ and the other variables equal zero. The objective value for the continuous relaxation is a strict upper bound on the optimal value of the modified knapsack problem above. Using this and $s_1 + 1\leq s_2$, we calculate
		\begin{align*}
			\sum_{i=1}^n \frac{1}{s_i}y^*_i < \frac{y_1^*}{s_1} + \frac{x^*_2}{s_2} & = \frac{y_1^*}{s_1} + \left(1 + \frac{1}{s_2}\right)\cdot \left(1 - \frac{y_1^*}{s_1 + 1}\right) \nonumber \\ 
			& \leq \frac{y_1^*}{s_1} + \left(1 + \frac{1}{s_1+ 1}\right)\cdot \left(1 - \frac{y_1^*}{s_1 + 1}\right) \nonumber \\
			& = 1 + \frac{1}{s_1+ 1} + y^*_1 \left(\frac{1}{s_1} - \frac{1}{s_1 + 1} - \frac{1}{(s_1 + 1)^2}\right) \nonumber \\ 
			& = 1 + \frac{1}{s_1+ 1} + \frac{y_1^*}{s_1(s_1 + 1)^2} \\ 
			& \leq 1 + \frac{1}{s_1 + 1} + \frac{1}{(s_1 + 1)^2}
		\end{align*}
		as $y_1^*\leq s_1$. 
		Since $s_1 \geq 2$, we obtain 
		\begin{align*}
			\sum_{i=1}^n \frac{1}{s_i}y^*_i < 1 + \frac{1}{3} + \frac{1}{9} < 1 + \frac{1}{2} = 1 + \varphi(1) \leq 1 + \varphi(n-1)
		\end{align*}
		as $n\geq 2$. So the objective value of the optimal solution is strictly smaller than $1 + \varphi(n-1)$. Hence, for the remainder of the proof, we can assume that $s_1 = 1 = t_0$. 
		
		Our next goal is to show that $s_i = t_{i-1}$ for all $2\leq i\leq n-1$, otherwise the value of our optimal solution is too small. 
		Let $j^*\geq 2$ be the smallest index such that $s_{j^*} \neq t_{{j^*}-1}$. 
		We begin by showing that $y^*_i = 1$ for all $1\leq i\leq j^*-1$. By the induction hypothesis, we have $y^*_i \geq 1$. 
		It suffices to show that $y^*_i\leq 1$ for all $1\leq i \leq j^* - 1$. When $i = 1$, the claim follows from $y_1^*\leq s_1 = 1$. Let $i \geq 2$. The knapsack constraint gives us
		\begin{align*}
			\sum_{j=1}^{i-1}\frac{1}{t_{j-1} + 1} + \frac{y^*_{i}}{t_{i-1} + 1} < \sum_{j=1}^n \frac{1}{s_j}y^*_j < 1.
		\end{align*}
		Using the identity from \cite{sylvestersequence1880sylvester}, we get 
		\begin{align*}
			\frac{y^*_{i}}{t_{i-1} + 1} < 1 - \sum_{j=1}^{i-1}\frac{1}{t_{j-1} + 1} = \frac{1}{t_{i-1}},
		\end{align*}
		which implies $y^*_{i} \leq 1$ as $t_{i-1}\geq t_1 = 2$. So we established $y^*_i = 1$ for all $1\leq i\leq j^*-1$. Next, we distinguish two different cases, $j^* \leq n - 1$ and $j^* = n$. 
		
		Let $j^* \leq n - 1$. 
		Using that $y_{i}^*= 1$ for all $1\leq i \leq j^*-1$, one can estimate the knapsack constraint from below:
		\begin{align*}
			\sum_{i=1}^{j^*-1}\frac{1}{t_{i-1} + 1} + \frac{1}{s_{j^*} + 1} < \sum_{i=1}^n \frac{1}{s_i}y^*_i < 1.
		\end{align*}
		This is equivalent to 
		\begin{align*}
			\frac{1}{s_{j^*} + 1} < 1 - \sum_{i=1}^{j^*-1} \frac{1}{t_{i-1} + 1} = \frac{1}{t_{j^*-1}},
		\end{align*}
		where we again make use of the identity from \cite{sylvestersequence1880sylvester}. The inequality above implies that $s_{j^*}\geq t_{j^*-1}$. Since $s_{j^*}\neq t_{j^*-1}$ by assumption, we get $s_{j^*}\geq t_{j^*-1} + 1$. We will use this to show that the optimal value of \eqref{intknap_proof_appendix} is smaller than $1+\varphi(n-1)$. Similar to the case, where we showed that we can assume $s_1 = 1$, fix $y_1^*,\ldots,y_{j^*-1}^*$ and consider the restricted integer knapsack problem with constraint
		\begin{align*}
			\sum_{i = j^*}^n \frac{1}{s_i + 1}x_i < 1 - \sum_{i=1}^{j^*-1} \frac{1}{t_{i-1} + 1}y^*_{i} =  1 - \sum_{i=1}^{j^*-1} \frac{1}{t_{i-1} + 1} = \frac{1}{t_{j^*-1}},
		\end{align*} 
		where we use $y^*_{i} = 1$ and \cite{sylvestersequence1880sylvester}. 
		Again, we pick the optimal vertex solution for the continuous problem over the closure of the feasible region, which is given by the only non-zero entry $x^*_{j^*} = (s_{j^*} + 1)/t_{j^*-1}$. We estimate the value of an optimal solution by the continuous relaxation and use $s_{j^*}\geq t_{j^*-1} + 1$, which we established above, to obtain
		\begin{align*}
			\sum_{i=1}^n \frac{1}{s_i}y^*_i < \sum_{i=1}^{j^*-1}\frac{1}{t_{i-1}} + \frac{x^*_{j^*}}{s_{j^*}} & = (1 + \varphi(j^* - 2)) + \left(\frac{1}{t_{j^*-1}} + \frac{1}{t_{j^*-1}s_{j^*}}\right) \\ 
			& \leq 1 + \varphi(j^* - 2) + \frac{1}{t_{j^*-1}} + \frac{1}{t_{j^*-1}(t_{j^*-1} + 1)} \\
			& = 1 + \varphi(j^*) \leq 1 + \varphi(n-1).
		\end{align*} 
		This solves the case when $j^* \leq n-1$.
		
		Let $j^* = n$. As before, fix the variables $y_1^*,\ldots,y_{n-1}^*$, which are all equal one, and consider the restricted integer knapsack constraint
		
		\begin{align*}
			\frac{1}{s_n + 1}x_n < 1 - \sum_{i=1}^{n-1} \frac{1}{t_{i-1} + 1}y^*_{i} =  1 - \sum_{i=1}^{j^*-1} \frac{1}{t_{i-1} + 1} = \frac{1}{t_{j^*-1}}.
		\end{align*} 
		We have $y^*_n = (s_n + 1 - l)/t_{j^*-1}\in\N$ for some $l\in\lbrace 1,\ldots,t_{j^*-1}\rbrace$. Plugging this into the objective function yields
		
		\begin{align*}
			\sum_{i=1}^n \frac{1}{s_i}y^*_i = 1 + \varphi(n-2) + \frac{s_n + 1 - l}{s_n t_{j^*-1}}\leq 1 + \varphi(n-2) + \frac{1}{t_{j^*-1}} = 1 + \varphi(n-1),
		\end{align*}
		where the last inequality comes from $l\geq 1$.
	\end{proof}
	
	While this proof slightly differs from the proof of \cite[Theorem~2]{kohlikrishna1992greedyknapsack}, we recycled some key arguments, in particular, the technique of estimating the objective value using the continuous relaxation. The notable differences are the argument in the case $j^* = n$, not relying on the significantly larger knapsack problem $P^*$ in \cite{kohlikrishna1992greedyknapsack}, and abbreviating some arguments by using induction on $n$. The proof above combined with the discussion in \cite{kohlikrishna1992greedyknapsack} at the beginning of Section~2 generalizes to arbitrary knapsack problems. We refrain from discussing this here since we only need the special problem stated in \eqref{intknap_appendix}.
	
\end{document}